 \newcommand{\NN}{{\mathbb N}} 
\newcommand{\ZZ}{{\mathbb Z}} 
\newcommand{\RR}{{\mathbb R}} 
\newcommand{\CC}{{\mathbb C}}
 \newcommand{\calO}{{\mathcal O}}
 \newcommand{\hor}{\operatorname{hor}}
 \newcommand{\ord}{\operatorname{ord}}
  \newcommand{\SL}{\operatorname{SL}}
  \newcommand{\Id}{\operatorname{Id}}
     \newcommand{\Br}{\operatorname{Br}}
 \newcommand{\pgcd}{{\rm pgcd}}
 \newcommand\Res{\operatorname{Res}}
 \newcommand\Ext{\operatorname{Ext}}
 \newcommand\Reel{\operatorname{Re}}
 \newcommand\Imagin{\operatorname{Im}}
 \newcommand\Aff{\operatorname{Aff}}
 \newcommand{\moduli}[1][g]{{\mathcal M}_{#1}}
 \newcommand{\komoduli}[1][g]{{\Omega^{k}\mathcal M}_{#1}}
\theoremstyle{plain}{
 \newtheorem{thrm}{Theorem}[section]
\newtheorem{cor}[thrm]{Corollary}
\newtheorem{lmm}[thrm]{Lemma}
\newtheorem{prop}[thrm]{Proposition}

}
\theoremstyle{remark}{
\newtheorem{rmk}[thrm]{Remark}
}
\theoremstyle{definition}{
\newtheorem{defn}[thrm]{Definition}
\newtheorem{conv}[thrm]{Convention}
\newtheorem{ex}[thrm]{Example}

}
\newcommand{\Weierstrass}{Weierstra\ss\xspace}
\def\be{\begin{equation}}
\def\ee {\end{equation}}
\def\ord{{\rm ord}}
\def\vert{{\rule{.5mm}{2.mm}}}
\def\hor{{\rule{2.mm}{.5mm}}}
\title{The space of solvable Pell-Abel equations}
 \author{Andrei Bogatyr\"ev and Quentin Gendron}
\begin{document}

 \maketitle

 \selectlanguage{english}

\begin{abstract}
Pell-Abel equation is a functional equation of the form $P^{2}-DQ^{2} = 1$, with a given polynomial $D$ free of squares and unknown polynomials $P$ and $Q$. We show that the space of Pell-Abel equations with the fixed degrees of $D$  and of a primitive solution~$P$ is a complex manifold. We describe its connected components by an efficiently computable invariant. Moreover, we give various applications of this result, including torsion pairs on hyperelliptic curves,  Hurwitz spaces and the description of the connected components of the space of primitive $k$-differentials with a unique zero on genus $2$ Riemann surfaces.
\end{abstract}


 \section{Introduction}

The reincarnation of the Diophantine equation of Pell  in the realm of
polynomials
was introduced and investigated by N. H. Abel in \cite{Abel}. Since
then the equation
\be
\tag{PA}
P^2(x)-D(x)Q^2(x)=1
\label{PA}
\ee
is known as \emph{Pell-Abel equation}.  Here $P(x)$ and $Q(x)$ are
unknown polynomials of one variable and
$D(x):=\prod_{e\in{\sf E}}(x-e)$ is a given degree $\deg D=|{\sf
E}|:=2g+2$ monic complex polynomial  without multiple roots. For a
generic choice of
$D$, the Pell-Abel equation only admits the trivial solutions
$(P,Q)=(\pm1,0)$.  If an equation has a nontrivial solution then  the
set of solutions is
infinite and contains a unique, up to sign, polynomial  with minimal
degree $n:=\deg P>0$. This solution is called \emph{primitive}. It
generates the other  solutions~$P$ via composition with the classical
Chebyshev polynomials and change of sign. This is discussed in
more details in Section~\ref{sec:solv}.
\smallskip
\par
Let us fix $g\geq0$ and $n\ge1$ and consider the set
$\tilde{\mathscr{A}}_{g}^{n}$ of monic polynomials $D$ of degree $\deg
D=2g+2$ whose associated Pell-Abel equations  have a primitive
solution of degree~$n$.  The affine group $x \mapsto ax +b$ with $a\in
\CC^{\ast}$ and $b\in \CC$ acts on the set of monic polynomials as $D(x)
\mapsto a^{-\deg D}D(ax+b)$. This action does not affect the degree
$n$ of the primitive solution of Equation~\eqref{PA} and our main
object of study is the quotient~$\mathscr{A}_{g}^{n}$  of the set
$\tilde{\mathscr{A}}_{g}^{n}$ by this group action. More precisely, we
have the following result proved in Section~\ref{sec:ModuliSpace}.
  \begin{thrm}\label{thm:geomstruct}
 The  set $\tilde{\mathscr{A}}_{g}^{n}$  is
 invariant under the action of the affine group. The
quotient~$\mathscr{A}_{g}^{n}$ is a smooth orbifold of complex
dimension $g$.
 \end{thrm}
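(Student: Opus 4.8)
The invariance is a direct substitution: if $P^{2}-DQ^{2}=1$ and $\tilde D(x):=a^{-(2g+2)}D(ax+b)$, then $\tilde P(x):=P(ax+b)$ and $\tilde Q(x):=a^{g+1}Q(ax+b)$ satisfy $\tilde P^{2}-\tilde D\tilde Q^{2}=1$. Since $x\mapsto ax+b$ preserves degrees and commutes with Chebyshev composition, $(\tilde P,\tilde Q)$ is again primitive of degree $n$, so $\tilde D\in\tilde{\mathscr{A}}_{g}^{n}$. For the quotient I would exploit the standard dictionary: a monic squarefree $D$ of degree $2g+2$ lies in $\tilde{\mathscr{A}}_{g}^{n}$ precisely when the function $P+Qy$ on the hyperelliptic curve $y^{2}=D(x)$ is a unit on the affine curve, equivalently when $\infty_{+}-\infty_{-}$ has order exactly $n$ in $\operatorname{Jac}(C)$. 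This already predicts the dimension: solvability imposes $g$ torsion conditions on the $g$-dimensional Jacobian, cutting the $(2g+2)$-dimensional space of $D$'s down to $g+2$, and the $2$-dimensional affine quotient leaves $g$.

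To obtain smoothness I would transport the deformation theory to the polynomial side. A primitive $P$ determines $D$ through $P^{2}-1=DQ^{2}$ (take $D$ to be the squarefree part and $Q^{2}$ the perfect square), and conversely $D$ determines $P$ up to sign; thus $\tilde{\mathscr{A}}_{g}^{n}$ is, up to a finite map and up to passing to a union of connected components (the order-exactly-$n$ condition is clopen inside the order-dividing-$n$ locus, so primitivity only selects components and does not affect smoothness), the space $\mathcal{P}$ of degree-$n$ polynomials all but $g$ of whose critical values, counted with multiplicity, lie in $\{\pm1\}$. Differentiating $P^{2}-1=DQ^{2}$ and using $\gcd(P,Q)=1$ gives $P'=QS$ with $\deg S=g$, so the $n-g-1$ roots of $Q$ are exactly the critical points frozen at $\pm1$, while the $g$ roots of $S$ carry the free critical values. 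Hence $\mathcal{P}$ is cut out of the $(n+1)$-dimensional space $\CC[x]_{\le n}$ by the $n-g-1$ equations demanding a critical value $\pm1$ near each root $q_{k}$ of $Q$.

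The crux is the transversality of these $n-g-1$ equations. When the $q_{k}$ are simple critical points, the derivative of the $k$-th equation is $\dot P\mapsto\dot P(q_{k})$: if $\xi_{k}(P)$ denotes the critical point near $q_{k}$, then the critical value varies as $\dot P(\xi_{k})+P'(\xi_{k})\dot\xi_{k}=\dot P(q_{k})$ because $P'(\xi_{k})=0$. The joint derivative is therefore the evaluation map $\dot P\mapsto(\dot P(q_{1}),\dots,\dot P(q_{n-g-1}))$, which is surjective by Lagrange interpolation at the $n-g-1\le n+1$ distinct points $q_{k}$. The implicit function theorem then makes $\mathcal{P}$ smooth of dimension $(n+1)-(n-g-1)=g+2$, with the $g$ free critical values as local coordinates. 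I expect the main difficulty to be precisely this transversality in its degenerate guises: when $Q$ is not squarefree the single evaluation at $q_{k}$ must be upgraded to a Hermite-type jet condition, and one must check these stay independent; concurrently one verifies that $P,Q,D$ remain pairwise coprime and $D$ stays squarefree along the deformation, which is where the coprimality built into a primitive solution enters.

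It remains to take the affine quotient. The reparametrization $P(x)\mapsto P(ax+b)$ fixes every critical value, hence preserves $\mathcal{P}$ and corresponds to the affine action on $D$ under the dictionary above; its orbits are $2$-dimensional and its stabilizers are the affine self-symmetries of the root set of $D$, which are finite since a set of $2g+2\ge2$ points has finite affine symmetry group. A proper action with finite stabilizers yields a smooth orbifold of dimension $(g+2)-2=g$, the genuinely orbifold points being the classes of those $D$ admitting a nontrivial finite affine symmetry. This would complete the argument.
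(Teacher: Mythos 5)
Your invariance computation is correct (and more explicit than the paper's, which simply observes that an affine substitution preserves the marked conformal structure, hence the distinguished differential and its periods), and your finite-stabilizer argument for the orbifold quotient is the same as the paper's. The gap is in the smoothness step, and it is not the removable technicality you flag at the end: the statement you propose to prove on the polynomial side is \emph{false} at the degenerate points. Take $P_0(x)=1-x^{3}$, $Q_0(x)=x$, $D_0(x)=x^{4}-2x=x(x^{3}-2)$. Then $P_0^{2}-D_0Q_0^{2}=1$, $D_0$ is squarefree of degree $4$, and $P_0$ is primitive (a degree-$1$ solution is impossible by degree count), so $D_0\in\tilde{\mathscr{A}}_{1}^{3}$. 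Here your $S=-3x$ and $Q_0=x$ share their root: the critical point at $0$ is degenerate, with critical value $1$. Near $P_0$ the locus of primitive solutions is exactly $\{P:\ P-1\ \text{has a repeated root near}\ 0\}$, i.e.\ the discriminant hypersurface of a cubic near its triple-root stratum; one checks that all four partials of $\operatorname{disc}(P-1)$ vanish at $P_0$, and the locus is locally a smooth two-dimensional factor times the cuspidal curve $4p^{3}+27q^{2}=0$. So the space of $P$'s is genuinely singular at such points, and no Hermite-jet upgrade of your interpolation argument can prove it smooth, because it is not smooth. (Note also that $P_0$ has \emph{all} its critical values in $\{\pm1\}$, so your critical-value count would place it in $\mathcal{P}$ with $g=0$, while it belongs to the $g=1$ family; this is exactly why the paper's extremality weight counts confluent critical points.)

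A second, independent flaw is the reduction ``up to a finite map'': a finite bijective holomorphic map does not transfer smoothness in either direction --- the normalization of a cusp is the standard example, and that is precisely what the map $D\mapsto P_D$ from $\tilde{\mathscr{A}}_{1}^{3}$ to the solution locus is near $D_0$. The bijection between $\tilde{\mathscr{A}}_{g}^{n}$ and $(\text{primitive solutions})/\pm$ is a homeomorphism but not an isomorphism of complex spaces at these points: smoothness in the coefficients of $D$ and smoothness in the coefficients of $P$ are different statements with different truth values, so the former cannot be deduced from the latter. This is why the paper never leaves the $D$-side. It characterizes $\tilde{\mathscr{A}}_{g}^{n}$ locally as a fiber of the period map of the distinguished differential (the lattice condition of Theorem~\ref{criterion}), works on the total space of differentials $d\eta({\sf B,E})$ over $\tilde{\cal H}_{g}$, and shows that the $2g$ cyclic periods together with $g+2$ abelian-integral values form local coordinates: a tangent vector annihilating all of them would produce a differential $d\zeta$ with vanishing periods, whose integral is a single-valued meromorphic function, odd under the hyperelliptic involution, with at most $2g+2$ simple poles but at least $2g+4$ zeros --- a contradiction; the Riemann bilinear relations then guarantee that the isoperiodic leaf projects immersively to $\tilde{\cal H}_{g}$. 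That argument is completely insensitive to how the critical points of $P$ collide, which is exactly where your approach breaks down.
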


 A first reading about orbifolds is Section~13 of \cite{ThGT3} and  in
the first approximation we can think of them as manifolds.

The main result of this paper consists in  classification of the
connected components of the spaces $\mathscr{A}_{g}^{n}$.  A weaker version was
announced in \cite{BoGeShort}, which contains a survey of the proof
given below.

 We first introduce the \emph{degree partition invariant} of a on element 
 $D\in\mathscr{A}_g^n$. Given a primitive solution $P$ of Equation~\eqref{PA}, its value $P(e) = \pm1$  at any zero $e\in \sf E$ of~$D$. 
 Therefore the set $\sf E$ is decomposed into two subsets ${\sf E}^\pm$ and we obtain the partition of the degree of $D$:
 $$
 |{\sf E}|=2g+2=|{\sf E}^+|+|{\sf E}^-|.
 $$
 The choice of the other primitive solution $-P$ interchanges indexes 
 $\pm$ in the summands, but the unordered partition remains the same.
 The \emph{degree partition invariant} of $D$ is the unordered pair of nonnegative integers $(|{\sf E}^-|,|{\sf E}^+|)$.

 \begin{thrm}\label{main}
Let $m=\min(g,n-g-1)$ and $[\cdot]$ denotes the integer part.
Equation~\eqref{PA} has no primitive solutions of degree $n<g+1$ or $n>1$ when $g=0$. Otherwise, the number of components $a(g,n)$ of $\mathscr{A}_{g}^{n}$ is equal to $[m/2]+1$ if $n+g$ is odd and $[(m+1)/2]$ if $n+g$ is even.
Moreover, each component is labelled by a unique
 degree partition $(|{\sf E}^-|,|{\sf E}^+|)$ satisfying
\begin{enumerate}
 \item $|{\sf E}^\pm|>0$,
 \item $|{\sf E}^\pm|\leq n$,
  \item the parity of $|{\sf E}^\pm|$ is equal to the parity of $n$.
\end{enumerate}
\end{thrm}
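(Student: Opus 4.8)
The plan is to show that the \emph{degree partition invariant} is a complete invariant of the connected components: it is locally constant, the pairs it can take are exactly those in conditions (1)--(3), these number $a(g,n)$, and the locus of $D$ realizing a fixed admissible pair is nonempty and connected. \textbf{Locally constant.} The roots of $D$ vary holomorphically with $D\in\mathscr{A}_g^n$, and by Theorem~\ref{thm:geomstruct} the primitive solution $P$ (unique up to sign) depends continuously on $D$; since $P(e)\in\{\pm1\}$ takes discrete values, the partition of $\sf E$ into ${\sf E}^\pm$ cannot jump, so the unordered pair $(|{\sf E}^-|,|{\sf E}^+|)$ is constant on each component. \textbf{Necessary conditions.} Over $\CC$ the polynomials $P-1$ and $P+1$ are coprime (their difference is the constant $2$), so in $DQ^2=(P-1)(P+1)$ each root of the squarefree $D$ divides exactly one factor; writing $D=D^+D^-$ with $D^{\pm}=\prod_{e\in{\sf E}^{\pm}}(x-e)$, unique factorization forces $P-1=\alpha\,D^{+}A^{2}$ and $P+1=\beta\,D^{-}B^{2}$ for polynomials $A,B$ and constants $\alpha,\beta$. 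Comparing degrees with $\deg(P\mp1)=n$ gives $|{\sf E}^{\pm}|\le n$ and $|{\sf E}^{\pm}|\equiv n\pmod 2$, which are conditions (2) and (3). If, say, ${\sf E}^-=\varnothing$, then $P+1=\beta B^{2}$, and after rescaling $B$ we get $P=2P_0^{2}-1=T_2(P_0)$ with $P_0^2-1=D\widetilde Q^{2}$, exhibiting a nontrivial solution of smaller degree and contradicting primitivity; hence $|{\sf E}^{\pm}|>0$, which is condition (1). Finally, comparing degrees in $P^{2}-1=DQ^{2}$ yields $\deg Q=n-g-1$, so there is no nontrivial (hence no primitive) solution when $n<g+1$; and for $g=0$ the curve $y^2=D(x)$ is rational, so $\infty_+-\infty_-$ is principal and the primitive degree equals $1$, excluding $n>1$.

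\textbf{Counting the admissible pairs.} Writing $a=|{\sf E}^-|\le|{\sf E}^+|=b$ with $a+b=2g+2$, conditions (1)--(3) translate into: $a$ runs over integers congruent to $n\pmod 2$ in the interval $[\max(1,\,2g+2-n),\,g+1]=[\,g+1-m,\,g+1\,]$, where $m=\min(g,n-g-1)$ and the bound $a\le n$ is automatic since $a\le g+1\le n$. This interval contains $m+1$ consecutive integers; its top endpoint $g+1$ has the parity of $n$ precisely when $n+g$ is odd. A direct count of the integers of the correct parity then gives $[m/2]+1$ values when $n+g$ is odd and $[(m+1)/2]$ values when $n+g$ is even, which is exactly $a(g,n)$. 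Thus it remains only to prove that each admissible pair is attained by a \emph{connected, nonempty} set of equations.

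\textbf{Realizability and connectedness (the crux).} To realize a given admissible pair and, more importantly, to connect all $D$ sharing it, I would pass to the geometric reformulation: the primitive solution packages into the Pell unit $F:=P+Q\sqrt{D}$ on the hyperelliptic curve $X:\,y^2=D(x)$, a degree-$n$ function totally ramified over $0$ and $\infty$ and satisfying $F\circ\sigma=1/F$ for the hyperelliptic involution $\sigma$. This presents $\mathscr{A}_g^n$ as a Hurwitz-type space of $\sigma$-equivariant covers $F\colon X\to\PP^1$ with prescribed behaviour over $\{+1,-1,0,\infty\}$, the points $+1,-1$ being the fixed points of $w\mapsto 1/w$; in this picture ${\sf E}^{\pm}$ is exactly the set of Weierstrass points lying over $\pm1$ (since $y=0$ there forces $F=P=\pm1$). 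Realizability follows by building, for each admissible $(a,b)$, an explicit model cover with the right numbers of odd branch points over $\pm1$, which one checks is nonempty by the degree/parity bookkeeping above. The connectedness of the locus with a fixed invariant is the main obstacle: I would prove it by showing the braid/mapping-class group acts transitively on the equivariant monodromy data once the invariant $(a,b)$ is fixed, realizing any two configurations as endpoints of a path that moves and merges the free branch points while keeping the distribution of the Weierstrass points over $\pm1$ constant. An alternative route, and a good consistency check, is induction on $g$: degenerate $X$ to a nodal curve by colliding two branch points, use Theorem~\ref{thm:geomstruct} to guarantee the deformations stay inside a smooth $g$-dimensional stratum, and invoke the lower-genus statement on the normalization; the delicate point in either approach is to verify that the surgery never forces a Weierstrass point to cross between $+1$ and $-1$, which would change the invariant.
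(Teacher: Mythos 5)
Your preparatory steps are sound: the local constancy of the degree partition, the necessity of conditions (1)--(3) via the coprime factorization $DQ^2=(P-1)(P+1)$ (your $T_2$-composition argument for $|{\sf E}^\pm|>0$ is a nice algebraic alternative to the graph-based proof of Lemma~\ref{lem:restdpi}), the treatment of the trivial cases, and the count of admissible pairs all check out and agree with the paper. But the theorem is not yet proved, and you say so yourself: everything hinges on the step you label ``the crux'', and for that step you offer only a plan. Claiming that ``the braid/mapping-class group acts transitively on the equivariant monodromy data once the invariant is fixed'' is not a reduction of the problem --- it \emph{is} the problem. The upper bound on the number of components is exactly the statement that any two equations with the same invariant can be joined by a path in $\mathscr{A}_{g}^{n}$, and no mechanism is given for producing such a path; note that inside $\mathscr{A}_{g}^{n}$ the branch points cannot be moved freely, since any admissible motion must keep all periods of $d\eta_M$ in the lattice $2\pi i\ZZ/n$ (Theorem~\ref{criterion}), so a generic braid does not act on this space at all. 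Your fallback induction has a concrete flaw as well: colliding two branch points leaves $\mathscr{A}_{g}^{n}$ (the limit is a nodal curve), where Theorem~\ref{thm:geomstruct} gives no control, and deciding in which component of the smooth locus a given smoothing of the nodal curve lies is precisely the question being asked.

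The paper closes this gap with machinery your proposal does not touch: to each $D$ it attaches the weighted planar graph of critical leaves of the foliations of the distinguished quadratic differential $(d\eta_M)^2$ (Section~\ref{sec:graphcalcul}), proves that such graphs are characterized axiomatically and determine the curve up to affine action (Theorem~\ref{thm:five}), and then performs explicit \emph{isoperiodic} surgeries on graphs --- rolling, pumping, attaching and detaching --- which by construction never leave $\mathscr{A}_{g}^{n}$. Theorem~\ref{thm:isoperdef} shows that every graph can be driven to a two-bush standard form $\Gamma^*(s,g,n)$, giving the upper bound; the linear graphs $\Gamma(s,g,n)$, which produce actual curves by the realization half of Theorem~\ref{thm:five}, give nonemptiness, and the computation~\eqref{DPinv} of their invariants shows that the standard forms exhaust exactly the admissible pairs and lie in distinct components. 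Braids do appear in the paper (Section~\ref{sec:braid}), but in the direction opposite to your plan: as an invariant distinguishing components via the Burau representation mod $2$, never as a tool for proving connectedness. So your necessity and counting arguments are compatible with the paper, but the realizability/connectedness core would still have to be built essentially from scratch.
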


This theorem has two trivial cases. When $n<g+1$, the degree of $P^2$
is strictly less than the degree of $DQ^2$ if the solution $(P,Q)$ is not trivial. When $g=0$, any Equation~\eqref{PA}
is brought to the case  $D(x)=x^2-1$  by a linear change of variable.
It admits the (primitive) solution $(P,Q)=(x,1)$ of degree $n=1$.
All the other cases are far less trivial. They are based on a
pictorial calculus representing the flat structure on the Riemann surface that we associate to each Pell-Abel equation.
\smallskip
\par
First, in Section~\ref{sec:solv}, we associate to every (marked) hyperelliptic Riemann surface a distinguished abelian differential. Using it, we propose a solvability criterion for the 
Pell-Abel equation in terms of the periods of this differential.
Then in Section~\ref{sec:graphcalcul} we elaborate the graphical technique
which allows to control the periods of the distinguished differential when we deform the polynomial~$D$. The upper bound for the number of connected components is obtained  in  Section~\ref{sec:upperbound}, where we bring
the graph of an arbitrary solvable Pell-Abel equation to one of standard forms.  Finally, we discuss the degree partition invariant  in Section~\ref{sec:GlobInv}. We show that it appears in the context of braids and that all standard forms have different invariants, hence lie in different components.
\smallskip
\par
\paragraph{\bf Applications.}
The Pell-Abel equation is inherently related to many problems in
various branches of
mathematics.
To cite some, it appears
in the reduction of abelian integrals
\cite{Abel, NGCheb, BelEno}, Poncelet porism \cite{ZheBu}, elliptic
billiards \cite{DragoRad},
approximation theory \cite{SYu,Peh,Bbook,B02},
spectral theory for infinite Jacobi matrices \cite {SYu}, algebraic
geometry including the study of
Frobenius endomorphisms \cite{Serre}, complex affine surfaces \cite{KollPell}, Teichmüller curves \cite{mcmtor}, etc.

Now we give some examples where our main result may be directly translated or applied.
\smallskip
\par
{\bf  (A) Extremal polynomials.} 
Shabat polynomials, i.e. polynomials with just two finite critical values, are rigid objects
whereas many applications require maps with similar properties, but more flexible. Those were defined in  
\cite{B02, Bbook} under the name of  {\em $g$-extremal polynomials} and in \cite[Section 12.2.2]{ZannierUnli} or \cite[Section~2]{BSZ22} as {\em almost Belyi maps}. A typical $g$-extremal polynomial $P(x)$ has only simple critical points with almost all critical values equal to $\pm1$ and exactly $g$ exceptional critical values not lying in this set.  In general we allow merging the critical points, and the extremality weight~$g$ defined e.g. in \cite{B02, Bbook} takes into account the confluent critical points,
even if the appropriate critical value lies in the exceptional set $\{\pm1\}$.

A practical interest in $g$-extremal polynomials comes from some
problems of uniform Chebyshev optimisation: the vast majority of arising alternation points for the
solution will be the critical ones and with the  values in the two-element set: $\pm$ the value
of approximation error. After re-normalization they become $g$-extremal with some small value of parameter $g$. Classical examples are Chebyshev and Zolotarev polynomials for $g=0$ and $g=1$ respectively.

Any polynomial $P$ is a solution of the unique Pell-Abel equation:
just extract the square-free part $D$ in the polynomial $P^2-1$. A simple calculation shows that
$\deg D=2g+2$ where $g$ is the extremality number of the polynomial $P$.
The set of $g$-extremal complex polynomials of given degree $N\ge g+1$ is a smooth complex manifold of dimension $g+2$ and the number of its components may be counted with the use of our main theorem. Indeed, every $g$-extremal polynomial $P_N(x)$ as a solution of Pell-Abel equation has the unique representation of the kind $\pm T_m\circ P_n(x)$, where $T_m$ is the classical degree~$m$ Chebyshev polynomial and $P_n$ is the primitive solution of the same Pell-Abel equation (see Theorem~\ref{criterion}).
One can show that the inverse polynomials $\pm P_N$ lie in the same component of the set of  $g$-extremal polynomials exactly when the corresponding degree partition has equal parts: $|{\sf E}^\pm|=g+1$.  Eventually, we arrive at 

\begin{cor}
The deformation space of $g$-extremal polynomials of the given degree $N$ 
consists  of one or two components when $g=0$ and $N$ is respectively odd or even.
For $g>0$ the same number is equal to 
\begin{equation}
 \sum_{n|N} 2a(g,n)-  \#\left\{ n\in\NN : \frac{N}{n} \text{ is odd and } n-g=1,3,5,\dots\right\}\,,
\end{equation}
where $a(g,n)$ is the number of components of  $\mathscr{A}_{g}^{n}$.
\end{cor}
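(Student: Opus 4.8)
The plan is to stratify the space of $g$-extremal polynomials of degree $N$ by the degree $n$ of the primitive solution appearing in the representation $P_N=\pm T_m\circ P_n$, where $m=N/n$. Since $n$ and $m$ are discrete data attached to a polynomial, they are locally constant under deformation, so this space is the disjoint union over divisors $n\mid N$ of the locus $\mathscr{E}^{n}$ of $g$-extremal polynomials with primitive degree $n$; for $n<g+1$ this locus is empty by Theorem~\ref{main}. Hence the component count equals $\sum_{n\mid N}\#\{\text{components of }\mathscr{E}^{n}\}$, and it remains to evaluate each summand.

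First I would set up the map $\Phi\colon\mathscr{E}^{n}\to\tilde{\mathscr{A}}_{g}^{n}$ sending $P_N$ to the monic square-free part $D$ of $P_N^2-1$. Given $D$, its primitive solution $P_n$ is determined up to sign, and then $\pm T_m\circ P_n$ takes exactly the two values $P_N$ and $-P_N$, so $\Phi$ is a double cover. Because the affine group is connected, it preserves the components of $\tilde{\mathscr{A}}_{g}^{n}$, so the base has the same number of components as $\mathscr{A}_{g}^{n}$, namely $a(g,n)$, each carrying a well-defined unordered degree partition. Over each base component the restriction of $\Phi$ is either connected or split, and the whole computation reduces to deciding this alternative.

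The decisive mechanism is the interaction of the two sign ambiguities through $T_m(-y)=(-1)^m T_m(y)$. When $m$ is even, $T_m\circ P_n$ is insensitive to the sign of $P_n$, so the overall sign $\epsilon$ in $P_N=\epsilon\,T_m\circ P_n$ is a globally defined locally constant function separating the two sheets; thus $\Phi$ splits over every component and $\mathscr{E}^{n}$ has $2a(g,n)$ components. When $m$ is odd, $-P_N=T_m\circ(-P_n)$, so the two sheets correspond precisely to the two signs of $P_n$, and following $P_n$ around a loop flips its sign exactly when the loop interchanges the label sets ${\sf E}^{+}$ and ${\sf E}^{-}$. Such a monodromy forces $|{\sf E}^{+}|=|{\sf E}^{-}|$, so over every unbalanced component the ordered partition $(|{\sf E}^{-}|,|{\sf E}^{+}|)$ is a locally constant separating function and the cover splits, contributing $2$; over the single balanced component, of partition $(g+1,g+1)$ and present iff $n\ge g+1$ and $n+g$ is odd, the cover should instead be connected, contributing $1$.

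The main obstacle is precisely this last connectivity claim: exhibiting, inside the balanced component of $\tilde{\mathscr{A}}_{g}^{n}$, a loop whose monodromy swaps ${\sf E}^{+}$ and ${\sf E}^{-}$. This is where the braid interpretation of Section~\ref{sec:GlobInv} and the standard forms of Section~\ref{sec:upperbound} enter: one moves the roots of $D$ along an explicit braid that fixes $D$ as a set while transposing the two equal-size label classes, and checks through the graphical calculus of Section~\ref{sec:graphcalcul} that the continued primitive solution returns as $-P_n$. Granting this, each divisor $n\mid N$ contributes $2a(g,n)$, diminished by $1$ exactly when $m=N/n$ is odd and the balanced partition is admissible, i.e. when $N/n$ is odd and $n-g\in\{1,3,5,\dots\}$; summing over $n\mid N$ yields the stated formula. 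Finally, for $g=0$ only $n=1$ survives, its partition $(1,1)$ is balanced, and the criterion collapses to $N$ odd versus $N$ even, giving one respectively two components, consistent with $T_N(-x)=(-1)^N T_N(x)$.
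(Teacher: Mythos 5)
Your proposal follows essentially the same route as the paper's own justification, which for this corollary is only a sketch in the paragraph preceding the statement: decompose the space by the degree $n$ of the primitive factor in the unique representation $P_N=\pm T_m\circ P_n$, regard the stratum as the sign double cover of $\tilde{\mathscr{A}}_{g}^{n}$, and decide over which components of the base the cover splits. You are in fact more careful than the paper on one point: the paper's assertion that $\pm P_N$ lie in the same component ``exactly when $|{\sf E}^\pm|=g+1$'' suppresses the role of the parity of $m=N/n$, which your identity $T_m(-y)=(-1)^mT_m(y)$ makes explicit (for $m$ even the cover splits over every component, whatever the partition; this is forced, since otherwise the count would contradict the stated formula, e.g.\ for $g=1$, $N=4$, $n=2$). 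Your bookkeeping --- splitting over unbalanced components, the balanced component $(g+1,g+1)$ existing exactly when $n\ge g+1$ and $n-g$ is odd by Theorem~\ref{main}, and the $g=0$ specialization --- reproduces the formula correctly.

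The one step you leave unproven --- connectivity of the cover over the balanced component when $m$ is odd --- is precisely the step the paper itself only asserts (``One can show that\dots''), so relative to the paper's treatment nothing is missing; but judged as a standalone proof this is a genuine gap, and it is not a formality. One must exhibit, inside each balanced component, an isoperiodic loop whose monodromy exchanges ${\sf E}^+$ and ${\sf E}^-$, i.e.\ continues $P_n$ to $-P_n$. The most naive mechanism --- a representative with $D$ even and $P_n$ odd, with the loop given by rotating the roots by $\pi$ --- cannot work in general: a balanced partition forces $n+g$ odd, so when $g$ is odd the degree $n$ is even and $P_n$ is never an odd polynomial. For instance for $g=1$, $n=2$ one instead needs a configuration like $D=x^4-1$, $P_2=x^2$, where the quarter-turn loop $D_t=x^4-e^{-4\pi it}$, $t\in[0,\tfrac12]$, continues $x^2$ to $-x^2$; in general the order of the rotational symmetry one needs depends on the arithmetic of $n$ and $g$, or one must argue via the braid calculus of Section~\ref{sec:GlobInv} as you indicate. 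Supplying this family of loops (or an equivalent monodromy argument) is the missing content of both your proposal and the paper's sketch.
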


\smallskip
\par
{\bf  (B) Hurwitz spaces.}  
A typical $g$-extremal  polynomial of degree $N$ with different exceptional critical values gives us a covering of a sphere by another sphere which is branched in a specific way. The cyclic type of monodromy
above $g+2$ finite critical points is described by the passport (see \cite{LZ} for definitions)
$$
[2^A1^{N-2A};~2^B1^{N-2B};~g\times 2^11^{N-2}]
$$
with  integers $2\le A,B \le N/2$ satisfying the planarity (or Riemann-Hurwitz) condition $A+B+g=N-1$.
Again, the polynomials $P_N$ realizing the above passport after their re-normalization have the representation as the composition of classical Chebyshev polynomial $T_m$  and a primitive solution $P_n$ of some Pell-Abel equation.  We should distinguish between two cases: for even $m$ the maximum of $2A,2B$ equals to $N$ and the minimum
is equal to $N-2g-2$; for odd $m$ the positive numbers $N-2A$ and $N-2B$ make up the degree partition for $P_n$.
\begin{cor}
The Hurwitz space of degree $N$ polynomials with the above monodromy passport 
has the following number of components:
\begin{enumerate}
 \item The number of integer $n$ such that $N/n$ is odd
and $n\ge N-2\min(A,B)$, when $N>2\max(A,B)$.
\item The sum $\sum_n a(g,n)$ over integer $n$ such that $N/n$ is even, when $N=2\max(A,B)$.
\end{enumerate}
\end{cor}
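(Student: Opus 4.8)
The plan is to reduce the Hurwitz problem to the component count of Theorem~\ref{main} via the composition structure of extremal polynomials. By Theorem~\ref{criterion}, every polynomial realizing the passport has a unique representation $P_N=\pm T_m\circ P_n$ with $mn=N$, where $P_n$ is the primitive solution of a Pell-Abel equation with $\deg D=2g+2$. This assigns to each point of the Hurwitz space a divisor $n\mid N$ and a point $D\in\mathscr{A}_g^n$. Since $T_m$ is rigid, I expect this assignment to identify the connected components of the Hurwitz space with those of the relevant parts of $\bigsqcup_{n\mid N}\mathscr{A}_g^n$; the task is then to determine which parts occur and with which degree partitions, and to read off the count.

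First I would use the classical identity $T_m(y)^2-1=(y^2-1)\,U_{m-1}(y)^2$, which gives $P_N^2-1=D\,\bigl(Q\,U_{m-1}(P_n)\bigr)^2$, so that $P_N$ satisfies the same Pell-Abel equation as $P_n$ with the same $D$. Writing the multiplicity of a root of $P_N\mp 1$ as the product of the multiplicity of the matching root of $T_m\mp 1$ with that of $P_n$ at a preimage, I would show that the \emph{simple} roots of $P_N-1$ and $P_N+1$ are exactly $\mathsf{E}^+$ and $\mathsf{E}^-$ when $m$ is odd, whereas for $m$ even all of $\mathsf{E}$ are simple roots of $P_N-1$ and $P_N+1$ has none. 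Matching the numbers of simple roots against the passport yields $N-2A$ and $N-2B$ over the two finite critical values, producing the dichotomy: $m$ even forces $\max(A,B)=N/2$ and $\min(A,B)=N/2-g-1$ (the case $N=2\max(A,B)$), while $m$ odd forces $N-2A,N-2B>0$ with $\{N-2A,N-2B\}$ equal to the degree partition of $P_n$ (the case $N>2\max(A,B)$).

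It remains to count. In case~(1), where $N>2\max(A,B)$, only odd $m$ can produce the passport, so the relevant divisors satisfy $N/n$ odd; the degree partition of $P_n$ is then forced to be the single pair $\{N-2A,N-2B\}$, so by Theorem~\ref{main} each such $n$ contributes exactly one component. The bound $|\mathsf{E}^\pm|\le n$ becomes $n\ge N-2\min(A,B)$, while positivity and parity are automatic; moreover $N-2\min(A,B)\ge g+1$ follows from $A+B+g=N-1$, so no small $n$ slip through. This is the stated count. In case~(2), where $N=2\max(A,B)$, only even $m$ occurs and the sign pattern of $P_n$ on $\mathsf{E}$ is invisible to the passport, so every component of $\mathscr{A}_g^n$ contributes; summing over the divisors with $N/n$ even gives $\sum_n a(g,n)$, with $g=N-1-A-B$ fixed and the vanishing $a(g,n)=0$ for $n<g+1$ removing the irrelevant terms.

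The step I anticipate as the main obstacle is Step~1, namely proving that the composition map is a bijection on connected components. Concretely one must verify that a deformation of $P_N$ preserving the passport is equivalent to a deformation of $D$ inside a single component of some $\mathscr{A}_g^n$, using that the Chebyshev factor carries no moduli and that the factorization degree $m$ is locally constant. A further point to settle is the overall sign in $\pm T_m\circ P_n$: it swaps the two finite critical values, hence interchanges $A$ and $B$, and one has to check that, relative to the unordered passport, this involution neither fuses nor splits components beyond what the unordered degree partition in Theorem~\ref{main} already accounts for.
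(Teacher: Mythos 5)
Your proposal is correct and follows essentially the same route as the paper: the paper's (very terse) justification is precisely the representation $P_N=\pm T_m\circ P_n$ from Theorem~\ref{criterion}, the even/odd $m$ dichotomy matching $\{N-2A,N-2B\}$ either to the forced pair $\{0,2g+2\}$ or to the degree partition of $P_n$, and then the count via Theorem~\ref{main}. Your additional details (the identity $T_m(y)^2-1=(y^2-1)U_{m-1}(y)^2$, the automatic positivity/parity checks, and the observation that $N-2\min(A,B)\ge g+1$) fill in steps the paper leaves implicit, and the component-bijection issue you flag is likewise left implicit in the paper, so your account is faithful to its argument.
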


Note that this generalises works with similar passports as in \cite{WajHur,FuOsHurwitz,MoPiHur} and partial results on these passports in \cite{KhZd} and in \cite[Table 5.1]{LZ}. Moreover, the use of abelian differentials to study Hurwitz spaces already appeared in \cite{mulSecond}.

\smallskip
\par
{\bf  (C) Torsion points.}
Given a genus $g$ hyperelliptic Riemann surface $M$ 
 with hyperelliptic involution
$J$ and a non-\Weierstrass marked point $p$. We can  ask when the Abel-Jacobi image  of the divisor
$p-Jp$ has some finite order $n$ in the Jacobian. Equivalently we can ask about the 
existence of a function $f\in \CC(M)$ whose divisor is $n(p-Jp)$.  This problem is equivalent to
solvability of some Pell-Abel equation which we explain in Remark~\ref{rem:solve} of Section~\ref{sec:solv}. Therefore, we claim that:

\begin{cor}\label{cor:torsion}
 The number of connected components of  the space of hyperelliptic
Riemann surfaces $M$ of genus $g$
 with a primitive $n$-torsion pair of  points conjugated by the
hyperelliptic involution is equal
 to $a(g,n)$. 
The degree partition $(|{\sf E}^-|,|{\sf E}^+|)$ is the number of $e\in \sf E$ such that 
$f(e,0) = \pm 1$ for suitable normalization of this function in the algebraic model 
\eqref{M} of~$M=M({\sf E})$.
\end{cor}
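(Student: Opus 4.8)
The plan is to upgrade the pointwise dictionary of Remark~\ref{rem:solve} to an identification of moduli spaces and then quote the count from Theorem~\ref{main}. First I would spell out the map from Pell--Abel data to torsion data. Given $[D]\in\mathscr{A}_g^n$ with primitive solution $(P,Q)$ of~\eqref{PA}, form the curve $M=M({\sf E})$ of~\eqref{M} and set $f:=P+yQ$. Since $f\cdot Jf=P^2-DQ^2=1$, the function $f$ has no affine zeros or poles: an affine zero $(x_0,y_0)$ would give $P(x_0)^2=y_0^2Q(x_0)^2=D(x_0)Q(x_0)^2$, contradicting $P^2-DQ^2=1$. Hence $\operatorname{div}(f)$ is supported over $x=\infty$. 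Using $\deg P=n$, $\deg Q=n-g-1$ and the leading-coefficient relation $(\operatorname{lead}Q)^2=1$ forced by~\eqref{PA}, a comparison of the two branches $y\sim\pm x^{g+1}$ at infinity gives (after possibly replacing $Q$ by $-Q$) $\operatorname{div}(f)=n(\infty_--\infty_+)$. Thus the conjugate pair $\{\infty_+,\infty_-\}$ is a primitive $n$-torsion pair, and $[D]\mapsto\bigl(M,\{\infty_+,\infty_-\}\bigr)$ yields torsion data of the required kind.

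Next I would build the inverse and match the symmetry groups. Starting from $(M,\{p,Jp\})$ with $\{p,Jp\}$ a primitive $n$-torsion pair, I choose the hyperelliptic coordinate $x$ so that $p$, and hence $Jp$, lies over $x=\infty$; as $p$ is non-\Weierstrass, $\infty$ is not a branch point and $M$ takes the model~\eqref{M} with monic $D$ of degree $2g+2$. The residual freedom is precisely the Möbius maps fixing $\infty$, i.e. the affine group $x\mapsto ax+b$ that defines $\mathscr{A}_g^n$, and this is exactly the subgroup preserving the pair $\{\infty_+,\infty_-\}$. The torsion hypothesis produces a function with divisor $n(p-Jp)$; scaling it so that $f\cdot Jf\equiv1$ and writing $f=P+yQ$ recovers a solution of~\eqref{PA}, with minimality of the torsion order $n$ corresponding to primitivity (minimal degree) of $P$. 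These two assignments are mutually inverse. Because both are algebraic in families and respect the respective group actions, they descend to a homeomorphism of the coarse space of $\mathscr{A}_g^n$ (the smooth orbifold of Theorem~\ref{thm:geomstruct}) with the coarse space of the torsion moduli. In particular the two have the same $\pi_0$, which Theorem~\ref{main} evaluates as $a(g,n)$.

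Finally I would translate the invariant. At a \Weierstrass point $(e,0)$ one has $y=0$, so $f(e,0)=P(e)$, and evaluating~\eqref{PA} at $x=e$ with $D(e)=0$ gives $P(e)^2=1$; therefore ${\sf E}^\pm=\{e:f(e,0)=\pm1\}$ coincides with the degree partition of Theorem~\ref{main}. The function $f$ is determined by the torsion pair up to a scalar, and the normalization $f\cdot Jf=1$ pins it down up to sign; replacing $f$ by $-f$ (equivalently $P$ by $-P$) interchanges ${\sf E}^+$ and ${\sf E}^-$, which is why the datum is the unordered pair and explains the phrase ``suitable normalization''.

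The algebra above is routine once the dictionary is set up; the main obstacle I expect is the moduli-theoretic step in the second paragraph, namely verifying that the set-theoretic bijection is a homeomorphism of the coarse spaces. Concretely, one must check that the assignment carries holomorphic families of Pell--Abel data to holomorphic families of marked hyperelliptic curves and back, and that the affine-group quotient matches isomorphisms of marked torsion pairs (including the behaviour at special curves with extra automorphisms, where the hyperelliptic involution may interchange $\infty_+$ and $\infty_-$). Since only the number of connected components is at stake, it suffices to establish continuity of the bijection in both directions on the coarse spaces, sidestepping the finer orbifold structure; granting this, the count $a(g,n)$ is immediate from Theorem~\ref{main}.
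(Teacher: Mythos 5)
Your proposal is correct and takes essentially the same approach as the paper: the paper's proof is exactly the dictionary of Remark~\ref{rem:solve}(3) --- the Akhiezer function $f=P+wQ$ has divisor $\pm n(\infty_{+}-\infty_{-})$, so $\infty_{+}-\infty_{-}$ is a primitive $n$-torsion divisor if and only if \eqref{PA} has a primitive solution of degree $n$ --- followed by an appeal to Theorem~\ref{main}, with the family/coarse-space details you elaborate left implicit (``follows readily''). One inessential slip: \eqref{PA} does not force $(\operatorname{lead}Q)^{2}=1$ but only $(\operatorname{lead}P)^{2}=(\operatorname{lead}Q)^{2}$ (since $D$ is monic), which is all your divisor computation at $\infty_{\pm}$ actually needs.
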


\smallskip
\par
{\bf  (D) Strata of $k$-differentials.}
A more elaborated application is the following result proved in Section~\ref{sec:compkdiffs}, where basic definitions are recalled.
\begin{cor}\label{cor:CCkdiff}
The moduli space of primitive $k$-differentials with a unique zero of order~$2k$ on genus~$2$  Riemann surfaces $\komoduli[2](2k)^{\rm prim}$ is empty for $k=2$, connected for $k=1,3$ or  $k\geq4$ even
and has two connected components for $k\geq5$ odd.  Moreover, the component of~$\mathscr{A}_{g}^{n}$ of degree partition invariant $(1,5)$, resp. $(3,3)$, corresponds to the component of odd, resp. even, parity of $\komoduli[2](2k)^{\rm prim}$.
 \end{cor}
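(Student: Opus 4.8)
The plan is to turn a primitive $k$-differential with a single zero into torsion data on the hyperelliptic curve, apply Corollary~\ref{cor:torsion} to count components, and finally read off the spin parity that separates the two components for odd $k$ from the degree partition. Since $g=2$ we have $\deg K=2$, and a $k$-differential $\xi$ with a single zero of order $2k$ at a point $p$ satisfies $\calO(2kp)\cong K^{k}$, i.e. $(\xi)=2kp\sim kK$. Using $K\sim p+Jp$, where $J$ is the hyperelliptic involution, this reads $k(p-Jp)\sim0$, so $p-Jp$ is a torsion class whose exact order $n$ divides $k$. I would first isolate the \Weierstrass case: if $p$ is a \Weierstrass point then $Jp=p$, $K\sim2p$, and $\xi=c\,\omega_{p}^{k}$ is the $k$-th power of the abelian differential $\omega_{p}$ with $(\omega_{p})=2p$; such a $\xi$ is primitive only for $k=1$, where it realises the full minimal stratum $\omoduli[2](2)$, which is connected by the classical theory of abelian differentials.

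For $k\ge2$, primitivity therefore forces $p$ to be non-\Weierstrass. Writing $\omega$ for the holomorphic differential with $(\omega)=p+Jp$ and $f$ for the function with $(f)=n(p-Jp)$, one checks $(f\,\omega^{n})=2np$, and every $k$-differential with the prescribed divisor equals $c\,f^{k/n}\omega^{k}$ once $n\mid k$. The next step is the primitivity bookkeeping: the identity $c\,f^{k/n}\omega^{k}=(c'\,f\,\omega^{n})^{k/n}$ shows that such a differential is a proper power unless $k=n$, so the primitive $k$-differentials with a single zero are exactly the $\xi=c\,f\,\omega^{k}$ attached to pairs $\{p,Jp\}$ of \emph{exact} order $k$. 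Since this assignment is holomorphic in $(M,p)$, is inverted by sending $\xi$ to its zero, and has for ambiguity only the connected scaling $c\in\CC^{\ast}$ and the interchange $p\leftrightarrow Jp$ realised by the automorphism $J$, it induces a bijection on connected components between $\komoduli[2](2k)^{\rm prim}$ and the space of genus~$2$ hyperelliptic surfaces with a primitive $k$-torsion pair. By Corollary~\ref{cor:torsion} the latter has $a(2,k)$ components.

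It then remains to evaluate $a(2,k)$ from Theorem~\ref{main} with $g=2$, so that $m=\min(2,k-3)$ and $n=k$. This gives $a(2,2)=0$ (there is no primitive solution of degree $n<g+1=3$, matching the absence of exact order-$2$ pairs and explaining the empty case), $a(2,3)=a(2,4)=1$, then $a(2,k)=1$ for even $k\ge4$ and $a(2,k)=2$ for odd $k\ge5$; together with the connected case $k=1$ this is the asserted list. The admissible degree partitions of $2g+2=6$ allowed by Theorem~\ref{main} are $(2,4)$ for even $k\ge4$ and, for odd $k$, $(3,3)$ (available once $k\ge3$) and $(1,5)$ (available once $k\ge5$), which also explains why the second odd component appears only from $k=5$ on.

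The hard part will be matching $(1,5)$ and $(3,3)$ with the odd and even parity components. For odd $k$ the integer $(k-1)/2$ makes sense, and $\vartheta:=\calO(kp)\otimes K^{-(k-1)/2}$ is a theta characteristic (indeed $\vartheta^{2}\cong\calO(2kp)\otimes K^{-(k-1)}\cong K$) whose parity $h^{0}(\vartheta)\bmod2$ is the spin invariant separating the two components. I would compute this parity in the Pell-Abel model of Remark~\ref{rem:solve}, where $p=\infty_{+}$, $f=P+Q\sqrt{D}$ and $f(e)=P(e)=\pm1$ at the branch points $e\in{\sf E}$. From $P^{2}-1=DQ^{2}$ one gets the factorisation $P-1=\mathrm{const}\cdot D_{+}\cdot(\text{square})$ with $D_{+}=\prod_{e\in{\sf E}^{+}}(x-e)$, together with the identity $(f-1)^{2}=2(P-1)f$; combining them shows that $f\cdot D_{+}$ is a square in $\CC(M)$. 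Reducing $\operatorname{div}(f\cdot D_{+})$ and using $\infty_{+}+\infty_{-}\sim K\sim2(e,0)$ yields $\vartheta\sim\sum_{e\in{\sf E}^{+}}(e,0)-(|{\sf E}^{+}|-1)(e_{1},0)$, i.e. $\vartheta$ is exactly the hyperelliptic theta characteristic $\eta_{{\sf E}^{+}}$ attached to the branch-point subset ${\sf E}^{+}$. Since on a genus~$2$ curve such a characteristic is odd precisely when $|{\sf E}^{+}|\in\{1,5\}$ and even when $|{\sf E}^{+}|=3$, the partition $(1,5)$ gives the odd and $(3,3)$ the even parity component, as claimed. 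The genuinely delicate point is this translation of the sign pattern of the primitive solution into the branch-point subset representing $\vartheta$, i.e. the identity that $f\cdot D_{+}$ is a square together with its divisor reduction.
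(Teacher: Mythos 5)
Your component count is correct and is essentially the paper's own route (Proposition~\ref{prop:cckdiffggeq3}): the divisor identities $2kp\sim kK$ and $K\sim p+Jp$ reduce primitive $k$-differentials with a single zero to primitive $k$-torsion pairs $(p,Jp)$, i.e.\ to primitive Pell--Abel solutions of degree $k$, after which Theorem~\ref{main} and Corollary~\ref{cor:torsion} give $a(2,2)=0$, $a(2,k)=1$ for $k=3$ or $k\geq4$ even, and $a(2,k)=2$ for $k\geq5$ odd, with the admissible partitions $(2,4)$, resp.\ $(3,3)$ and $(1,5)$. Your explicit treatment of \Weierstrass zeros (forcing $k=1$, where one gets the full connected minimal stratum) and of the primitivity bookkeeping $n=k$ is in fact more careful than what the paper writes out.

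For the parity clause you take a genuinely different route. The paper (Proposition~\ref{prop:relationplatcheby}) cites \cite{GePA} to see that the partition $(1,5)$ is exactly the case where some \Weierstrass point $W$ has $W-z$ of $k$-torsion, and then runs a degeneration argument with twisted $k$-differentials and \cite{chgeCC} to show that this case is the odd-parity one. You instead stay on $M$: the identity $(f-1)^{2}=2(P-1)f$ together with the factorisation $P-1=c\,D_{+}A^{2}$ shows $fD_{+}$ is a square, whence $\vartheta:=\calO(kp)\otimes K^{-(k-1)/2}\sim\eta_{{\sf E}^{+}}$; I checked this divisor computation (the last step uses $k(p-Jp)\sim0$, and complementary subsets give the same characteristic in genus $2$, so the $\pm P$ ambiguity is harmless), and it correctly yields that $\vartheta$ is odd iff the partition is $(1,5)$. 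This is an attractive, degeneration-free substitute for the first half of the paper's argument.

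However, there is a genuine gap at the final step: you assert without proof that $h^{0}(\vartheta)\bmod2$ \emph{is} the parity invariant separating the components. The parity in the corollary is the invariant of \cite{chgeCC}, defined as the parity of the spin structure on the canonical $k$-cover $\pi\colon\hat M\to M$, not of a theta characteristic on $M$; a priori your invariant could be the \emph{opposite} of that one, which would swap odd and even in the conclusion, so the statement is not pinned down by your argument as written. The gap can be closed in your own framework: since the unique zero has order $2k$, the canonical cover is the unramified $\ZZ/k\ZZ$-cover attached to the order-$k$ bundle $L=\calO(p-Jp)$, and its spin structure is $\pi^{\ast}\calO(p)$; by the projection formula $h^{0}\bigl(\hat M,\pi^{\ast}\calO(p)\bigr)=\sum_{j=0}^{k-1}h^{0}\bigl(M,\calO(p)\otimes L^{j}\bigr)$, and the hyperelliptic involution exchanges $\calO(p)\otimes L^{j}$ with $\calO(p)\otimes L^{k-1-j}$ while preserving $h^{0}$, so modulo $2$ only the fixed term $j=(k-1)/2$ survives --- and $\calO(p)\otimes L^{(k-1)/2}$ is exactly your $\vartheta$. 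With this supplement (or some equivalent comparison, which is precisely what the paper's degeneration argument achieves), your proof is complete.
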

 The proof of the second part of the corollary is given in Proposition~\ref{prop:relationplatcheby} by studying the torsion packets modulo the \Weierstrass points, which may be of independent interest.

\smallskip
\par
\paragraph{\bf Acknowledgements:}
 We thank Vincent Delecroix for the programming help and Victor
Buchstaber for his constant interest in
 this topic. Various aspects of this work were discussed at the
research seminars: \emph{A. Gonchar seminar on complex analysis},
 \emph{Graphs on surfaces and curves over arithmetic fields},
 \emph{HSE math seminar}.
 The authors thank the organizers and all the participants of these
seminars for fruitful discussions.
 Also we thank the anonymous referees of this paper for their valuable
suggestions.
Finally, our special thanks go to Jean-Pierre Serre who initiated our
collaboration.

\section{Solvability of Pell-Abel equation}
\label{sec:solv}

Fix a polynomial $D$ of degree $2g+2$  whose roots are all simple. The union of these roots is denoted by  $\sf E$. Some
conditions on $D$ have to be imposed  \cite{Abel,NGCheb,Mal,SYu} to guarantee the existence of a nontrivial 
solution of Pell-Abel Equation \eqref{PA}, that is with $n:=\deg P>0$. The criterion given by Abel is the periodicity of the continued fraction 
for the square root of $D$ (see \cite{PlatoTorSurv} and the references therein for a more modern presentation). We will use a transcendental criterion coming from
\cite{B02, Bbook} which is much easier to handle with.

We associate to the polynomial $D(x)=\prod_{e\in{\sf E}}(x-e)$ the affine genus $g$ hyperelliptic Riemann surface
\be 
M=M({\sf E}):=\left\{(x,w)\in \mathbb{C}^2: w^2=D(x)\right\}\,.
\label{M}
\ee
The latter admits the natural two point compactification
\be
M_{\infty}:=M\cup \{\infty_\pm\}
\label{Mc}
\ee
where the two points $\infty_\pm$ at infinity are distinguished by the 
limit value of the function $w^{-1}x^{g+1}(\infty_\pm)=\pm1$. 
The added points are interchanged by the hyperelliptic involution $J(x,w)=(x,-w)$
acting on $M_{\infty}$. In what follows, we will suppose that the points $\infty_{\pm}$ are marked on the Riemann surface $M_{\infty}$.

The  Riemann surface  $M_{\infty}$ associated to $D$ bears a unique meromorphic differential of the third kind
\be
d\eta=d\eta_M=(x^g+a_{g-1}x^{g-1}+\dots +a_{0})w^{-1}dx
\label{deta}
\ee 
having two simple poles at infinity with residues $\Res d\eta|_{\infty_\pm}:= \mp1$ and purely imaginary periods (see Proposition~3.4 of \cite{GrKr}). This differential will be referred as the \emph{distinguished differential}. 

Note that the distinguished differential is odd with respect to hyperelliptic involution: $J^*d\eta=-d\eta$. In particular, there is a unique quadratic differential $(d\eta)^{2}$ on the Riemann sphere such that $d\eta$ is the root of the pull-back of $(d\eta)^{2}$ on~$M_{\infty}$ (or $d\eta$ is the canonical cover of $(d\eta)^{2}$ in the terminology of~\cite{BCGGM3}). This quadratic differential is referred as the {\em distinguished quadratic differential}.

We give the criterion of solvability of Equation~\eqref{PA} in terms of the distinguished differential.

\begin{thrm}
\label{criterion}
Given $n\geq1$, Equation~\eqref{PA}  admits a nontrivial solution with $\deg P=n$ if and only if all the periods of  $d\eta_M$ on $M$ are contained in the lattice $2\pi i\mathbb{Z}/n$. 

If this condition is satisfied, then
the solution of Pell-Abel equation is given, up to sign, by:
\begin{equation}\label{PQ}
 P (x)=  \cos \left( ni\int_{(e,0)}^{(x,w)}d\eta_M \right) \text{ and } Q(x) = iw^{-1} \sin\left( ni\int_{(e,0)}^{(x,w)}d\eta_M \right) \,.
\end{equation}
\end{thrm}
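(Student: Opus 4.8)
The plan is to translate solvability into the existence of a single meromorphic function on $M_\infty$ and then to identify its logarithmic differential with $n\,d\eta_M$. Concretely, I would set up a correspondence between nontrivial solutions with $\deg P=n$ and functions $f$ on $M_\infty$ satisfying $f\cdot J^*f=1$ with $\operatorname{div}(f)=n\,\infty_--n\,\infty_+$, the passage in both directions being $f=P+wQ$ on one side and $P=\tfrac12(f+1/f)$, $wQ=\tfrac12(f-1/f)$ on the other.

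For the direct implication, given a nontrivial solution I would factor the equation over $M$ as $(P+wQ)(P-wQ)=1$ and put $f:=P+wQ$, so that $J^*f=P-wQ=1/f$. A zero of $f$ would force $P^2=DQ^2$ and hence $P^2-DQ^2=0\neq1$, so $f$ has neither zeros nor poles on the affine curve $M$ and its divisor is supported on $\{\infty_+,\infty_-\}$. Comparing leading coefficients (after possibly replacing $Q$ by $-Q$, i.e. $f$ by $1/f$) gives $\operatorname{div}(f)=n\,\infty_--n\,\infty_+$, whence $d\log f$ is of the third kind with $\Res d\log f|_{\infty_\pm}=\mp n$, matching the residues of $n\,d\eta_M$; moreover every period of $d\log f$ lies in $2\pi i\ZZ$, since it records the winding number of $f$.

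The conceptual crux is a uniqueness statement: a differential of the third kind with prescribed poles and residues and all periods purely imaginary is unique. I would prove it by noting that the difference $\omega$ of two such differentials is holomorphic on the compact surface $M_\infty$ with purely imaginary periods, so that $\Reel\int\omega$ is single-valued and harmonic on $M_\infty$, hence constant, forcing $\omega=0$. Applied to $d\log f$ and $n\,d\eta_M$ — both of which have purely imaginary periods, by the winding remark and by the defining property of $d\eta_M$ respectively — this yields $d\log f=n\,d\eta_M$, so the periods of $d\eta_M$ lie in $2\pi i\ZZ/n$, as required.

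For the converse, assuming all periods of $d\eta_M$ lie in $2\pi i\ZZ/n$, I would define $f:=\exp\!\left(n\int_{(e,0)}^{(x,w)}d\eta_M\right)$; the period hypothesis is exactly what makes $f$ single-valued, and the simple poles of $d\eta_M$ produce a pole of order $n$ at $\infty_+$ and a zero of order $n$ at $\infty_-$. Using $J^*d\eta_M=-d\eta_M$ together with the fact that the base point $(e,0)$ is fixed by $J$, the constant relating $J^*f$ to $1/f$ is trivial, so $f\cdot J^*f=1$. Then $P:=\tfrac12(f+1/f)$ is $J$-invariant with poles only over $x=\infty$, hence a degree-$n$ polynomial in $x$, while $wQ:=\tfrac12(f-1/f)$ is $J$-anti-invariant and vanishes at the \Weierstrass points (where $f=\pm1$), so $Q$ is a polynomial, necessarily of degree $n-g-1$. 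By construction $P^2-DQ^2=f\cdot(1/f)=1$, and rewriting the exponentials through $\cosh$ and $\sinh$ reproduces \eqref{PQ} up to the overall sign. The main obstacle I anticipate is the bookkeeping that fixes the divisor orientation and certifies that $P$ and $Q$ emerge as genuine polynomials of the correct degrees; once the uniqueness lemma is in place, the remainder is a careful but routine translation.
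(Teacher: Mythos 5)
Your proposal is correct and takes essentially the same route as the paper: both directions rest on the Akhiezer function $f=P+wQ$ and the identification $d\log f=n\,d\eta_M$, and your exponential construction in the converse is just the paper's trigonometric formula~\eqref{PQ} rewritten via $\cosh$ and $\sinh$. The only difference is presentational: you spell out the uniqueness lemma for third-kind differentials with purely imaginary periods (via the harmonic single-valued real part), which the paper leaves implicit in the defining uniqueness of the distinguished differential.
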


\begin{proof}
 If Equation~\eqref{PA} has a nontrivial solution $(P,Q)$ then 
the (Akhiezer) rational function $f(x,w)=P(x)+wQ(x)\in \CC(M_{\infty})$ satisfies $f(x,-w) =1/f(x,w)$.
Hence it has a unique pole at $\infty_+$ and a unique zero at $\infty_-$, both of multiplicity $n$. In that case, the distinguished differential is equal
to $d\eta=n^{-1}d\log(f(x,w))$. The fact that $\log$ is $2i\pi$-periodic implies that the periods of $d\eta$ lie in $2i\pi \mathbb{Z}/n$. 

Conversely, the lattice condition 
\be
\int_{H_1(M,\ZZ)} d\eta_M\subset2\pi i\ZZ/n
\label{LatCond}
\ee
and the fact that  $J^{\ast}d\eta = -d\eta$ imply that the functions in the right hand sides of Equation~\eqref{PQ} are polynomials of degrees $n$ and $n-g-1$ respectively. 
Now the Pythagorean theorem $\sin^2(z)+\cos^2(z)=1$ for $z\in \CC$  reads as Pell-Abel equation.
\end{proof}

\begin{rmk}\label{rem:solve}
1) The lattice condition as the criterion for the solvability of Pell-Abel equation first appeared seemingly 
in approximation theory and it is related to Chebyshev approach to least deviation problems \cite{Zol, Bbook}. 
Some particular cases may be found in \cite{Rob, SYu, Peh, B99, B02}. 

2) Given a polynomial $D$, the set of all solutions of Equation~\eqref{PA} admits a group structure which mimics 
multiplication of Akhiezer functions:
\be
(P,Q)*(p,q)=(Pp+DQq,Pq+Qp).
\ee
The trivial solution $(1,0)$ is the unit of this group and the inverse of $(P,Q)$ is $(P,-Q)$. It follows from the trigonometric representation of solutions given in Equation~\eqref{PQ}, that the primitive solution generates all higher degree solutions
via the composition with the classical Chebyshev polynomial and possibly a change of sign.

3) Note that if Equation~\eqref{PA} has a non trivial solution $(P,Q)$ of degree $n$, then the Akhiezer function  $f(x,w)=P(x)+wQ(x)\in \CC(M_{\infty})$ has divisor $n\infty_{+} -n\infty_-$. Hence the divisor $\infty_{+}-\infty_{-}$ is of primitive $n$-torsion if and only if Equation~\eqref{PA} has a primitive solution of degree~$n$. Corollary~\ref{cor:torsion} follows readily from Theorem~\ref{main} using this remark.
\end{rmk}

\section{Space of Pell-Abel equations}
\label{sec:ModuliSpace}

Let us study the constraints imposed by the lattice condition~\eqref{LatCond} of Theorem~\ref{criterion}. 
Consider the space $\tilde{\cal H}_g$ of complex monic square free polynomials $D(x)$ of degree $2g+2$. It may be identified
with the space $\CC^{2g+2}$ with removed discriminant set. The disjoint zeros $e\in\sf E$ may serve as local 
coordinates of this complex manifold. The polynomials such that the Pell-Abel Equation~\eqref{PA} 
has a \emph{primitive} solution of degree $n\geq1$ form a subset~$\tilde{\mathscr{A}}_{g}^{n}$ of~$\tilde{\cal H}_g$.  We show it is a manifold.

\begin{thrm}
The set of polynomials $\tilde{\mathscr{A}}_{g}^{n}$ is either empty or a smooth complex manifold of pure dimension $g+2$.
\label{PAmanifold}
\end{thrm}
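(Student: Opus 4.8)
The plan is to reinterpret the defining condition of $\tilde{\mathscr{A}}_{g}^{n}$ through the torsion picture of Remark~\ref{rem:solve}(3): a polynomial $D$ lies in $\tilde{\mathscr{A}}_{g}^{n}$ exactly when the divisor $\infty_+-\infty_-$ has primitive order $n$ in the Jacobian of the compactified curve $M_{\infty}=M_{\infty}({\sf E})$. Thus $\tilde{\mathscr{A}}_{g}^{n}$ is the locus inside $\tilde{\cal H}_g$ where the tautological section $s\colon D\mapsto[\infty_+-\infty_-]$ of the relative Jacobian $\mathrm{Jac}\to\tilde{\cal H}_g$ meets the $n$-torsion, and it is this algebro-geometric description, rather than the real-analytic period condition of Theorem~\ref{criterion}, that I would exploit in order to produce simultaneously the complex structure and the correct dimension.

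First I would fix a point $D_0\in\tilde{\mathscr{A}}_{g}^{n}$ and a simply connected neighbourhood $U\subset\tilde{\cal H}_g$ over which $H_1(M_{\infty},\ZZ)$ is a trivial local system; on $U$ one chooses a symplectic basis together with the dual normalized holomorphic differentials $\omega_1,\dots,\omega_g$. Then both the period matrix $\tau(D)$ and a lift $u(D)=\bigl(\int_{\infty_-}^{\infty_+}\omega_j\bigr)_j\in\CC^g$ of the Abel--Jacobi class depend \emph{holomorphically} on $D\in U$, and the $n$-torsion condition reads $n\,u(D)\in\ZZ^g+\tau(D)\ZZ^g$. Hence on $U$ the order-dividing-$n$ locus is the union, over characteristics $(\mathbf{p},\mathbf{q})\in\ZZ^{2g}$, of the zero sets of the holomorphic maps $F_{\mathbf{p},\mathbf{q}}\colon U\to\CC^g$ given by
$$
F_{\mathbf{p},\mathbf{q}}(D)=n\,u(D)-\mathbf{p}-\tau(D)\mathbf{q}.
$$
This exhibits the locus as a complex-analytic subset of codimension at most $g$, hence of dimension at least $g+2$, and it is precisely the source of the complex structure. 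Since the finite étale $n$-torsion group scheme contains each $d$-torsion for $d\mid n$ as a union of connected components, the exact-order-$n$ (primitive) locus $\tilde{\mathscr{A}}_{g}^{n}$ is obtained from the order-dividing-$n$ locus by deleting a union of connected components; so it is enough to prove the latter is smooth of pure dimension $g+2$.

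It then remains to show that each $F_{\mathbf{p},\mathbf{q}}$ is a holomorphic submersion along its zero set, i.e. that the $g\times(2g+2)$ Jacobian $\bigl(\partial F_j/\partial e_l\bigr)$ has rank $g$, and this is the step I expect to be the genuine obstacle. Using Rauch's variational formulas for the hyperelliptic family $w^2=\prod_{e\in{\sf E}}(x-e)$ with $\omega_j=x^{j-1}w^{-1}dx$, the derivatives $\partial_{e_l}\tau_{jk}$ and $\partial_{e_l}u_j$ are each proportional to the value $\hat\omega_j(e_l)$ of the normalized differential at the branch point $e_l$. Consequently the Jacobian factors as $\bigl(\hat\omega_j(e_l)\bigr)_{j,l}\cdot\mathrm{diag}(\lambda_1,\dots,\lambda_{2g+2})$, where $\lambda_l$ is a scalar measuring the vanishing at $e_l$ of a normalized third-kind differential of the form $n\,\omega_{+-}-\sum_k q_k\omega_k$ attached to the characteristic (with $\omega_{+-}$ the $A$-normalized differential of residues $\mp1$ at $\infty_\pm$). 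The left factor is a generalized Vandermonde matrix of rank $g$, so the task reduces to controlling how many of the $\lambda_l$ can vanish: a zero $\lambda_l=0$ forces a zero of that differential to land on the branch point $e_l$, and since the differential has only $2g$ zeros on $M_{\infty}$, one must verify that at \emph{every} point of the locus at least $g$ of the $\lambda_l$ survive and that the corresponding Vandermonde columns remain independent.

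Granting this rank-$g$ statement, the holomorphic implicit function theorem makes every $\{F_{\mathbf{p},\mathbf{q}}=0\}$ smooth of pure dimension $g+2$; patching over a covering of $\tilde{\cal H}_g$ by such neighbourhoods $U$ and discarding the lower-torsion components yields that $\tilde{\mathscr{A}}_{g}^{n}$ is a smooth complex manifold of pure dimension $g+2$ whenever it is nonempty, the emptiness occurring precisely when no admissible characteristic supports a primitive solution (for instance when $n<g+1$). The only delicate input is the transversality computation of the preceding paragraph, which is exactly where the special geometry of the points $\infty_\pm$ and of the distinguished differential enters.
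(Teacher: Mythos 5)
Your reduction to the rank computation is set up correctly (the disjointness of the branches $\{F_{\mathbf{p},\mathbf{q}}=0\}$ for distinct characteristics follows from positive definiteness of $\Imagin\tau$, so discarding lower-order components is legitimate), but the proof has a genuine hole exactly where you flag it: the rank-$g$ statement is asserted, not proved, and it is not a routine verification that can be "granted" — it is the entire content of the theorem. Without it you only get that $\tilde{\mathscr{A}}_{g}^{n}$ is an analytic set of dimension at least $g+2$, and nothing prevents it a priori from being singular or of excess dimension. Note also that the naive zero count you invoke does not suffice: the differential $\psi=n\,\omega_{+-}-\sum_k q_k\hat\omega_k$ has $2g$ zeros, and $2g$ can exceed the number $g+2$ of columns you are allowed to lose, so "only $2g$ zeros" alone does not bound the number of vanishing $\lambda_l$ by $g$ when $g\geq 3$.

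The gap can be closed, and the closing argument is the same structural fact that powers the paper's proof: oddness under the hyperelliptic involution. On the zero locus of $F_{\mathbf{p},\mathbf{q}}$, the differential $\psi$ is (up to a nonzero constant) $d\log f=n\,d\eta_M$, where $f$ is the Akhiezer function of Theorem~\ref{criterion}; in particular $J^{*}\psi=-\psi$. In the local coordinate $t=\sqrt{x-e_l}$ at a branch point one has $\psi=\phi(t)\,dt$ with $\phi$ even, so any zero of $\psi$ at a branch point has \emph{even} order; since $\deg\operatorname{div}(\psi)=2g-2$ and $\psi$ has two simple poles, at most $g$ of the $2g+2$ branch points can be zeros, i.e. at least $g+2$ of your $\lambda_l$ are nonzero. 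Moreover the independence of the surviving columns is automatic, not something extra to check: writing $\hat\omega_j=P_j(x)\,dx/w$ with $\{P_j\}$ a basis of polynomials of degree $\leq g-1$, any $g$ columns of $\bigl(\hat\omega_j(e_l)\bigr)$ equal an invertible coefficient matrix times a Vandermonde matrix at distinct points. This completes your route, which is genuinely different from the paper's: the paper never passes to the Jacobian or to Rauch formulas, but works on the total space of non-normalized differentials $d\eta({\sf B},{\sf E})$, shows the full period-plus-evaluation map $({\sf B},{\sf E})\to(\pi,\tau)$ is a local biholomorphism (a tangent vector in its kernel yields an odd single-valued function $\zeta$ with at most $2g+2$ poles but $2g+4$ zeros, hence zero), and then projects to $\tilde{\cal H}_g$ using the Riemann bilinear relations. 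Both proofs ultimately rest on a too-many-zeros count for an odd object; your version localizes it in the transversality of $F_{\mathbf{p},\mathbf{q}}$, which you still need to write out.
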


The proof relies on the fact that the set $\tilde{\mathscr{A}}_{g}^{n}$ is given by the polynomials $D(x)$ such that the associated distinguished differential $d\eta$ on 
$M$ satisfies the lattice condition~\eqref{LatCond} of Theorem~\ref{criterion}. 

\begin{proof}
Consider the space of non-normalised abelian differentials
$$
d\eta({\sf B,E}):=\frac{\left(x^g+\sum_{s=0}^{g-1}b_sx^s\right)}{\sqrt{\prod_{j=1}^{2g+2}(x-e_j)}}dx\,,
$$
with coordinates $({\sf B,E}):=(b_0, \dots,b_{g-1}; e_1,\dots,e_{2g+2})$. This is a natural fibration over 
the space $\tilde{\cal H}_g$.

Let us fix $2g+1$ closed paths on the given twice punctured surface $M=M(\sf E_0)$ which represent a basis of the homology group $H_1(M,\ZZ)$ (an extra nontrivial cycle encompasses a puncture). By disturbing the loops within their homology class we suppose that the projections $C_0,C_1, 
\dots, C_{2g}$ of those contours to the $x$-plane  are disjoint from the branching set ${\sf E}_0$. 
Therefore for all ${\sf E}\in \tilde{\cal H}_g$ in a small vicinity of  ${\sf E}_0$ the lifts of those contours 
to the surface $M({\sf E})$ represent the basis of the first homology group. We denote by $C_0$ the cycle 
encompassing a puncture at infinity.

We also fix $g+2$  paths $D_s$ on the complex plane disjoint from the branching set 
${\sf E}_0$, starting at a common point  $p_0$ and ending at arbitrarily chosen but distinct points $p_s$,
for $s=1,\dots, g+2$. Finally, we fix a loop $D_0$   lifting to an open path on $M({\sf E}_0)$ and connecting two preimages of $p_0$ on the surface. This set of data provides us with $3g+2$ locally defined   holomorphic functions:
\begin{eqnarray*}
 \pi_j({\sf B,E}) &:=&\int_{C_j}d\eta({\sf B,E}), \text{ for } j=1,2,\dots,2g\,, \text{ and }\\
\tau_s({\sf B,E})&:=&\int_{D_s}d\eta({\sf B,E})+\frac12\int_{D_0} d\eta({\sf B,E}),  \text{ for } s=1,2,\dots,g+2 \,.
\end{eqnarray*}

If the coordinate change $(\sf B,E)\to(\pi,\tau)$ is degenerate at the point $(\sf B_0,E_0)$, there exists a 
tangent vector $\sum_j\beta_j\frac\partial{\partial b_j}+\sum_s\epsilon_s\frac\partial{\partial e_s}$ 
annihilating all these functions at this point of the space of differentials. This means that the  
differential 
\begin{equation*}
d\zeta:=\frac12 \sum_{s=1}^{2g+2}\epsilon_s\frac{d\eta_M}{x-e_s}+
\sum_{j=0}^{g-1}\beta_j\frac{x^jdx}w 
\end{equation*}
determined by the tangent vector satisfies the equations
\begin{eqnarray*}
 \int_{C_{s}} d\zeta &=& 0, \text{ for all } s=1,\dots,2g, \text{ and}\\
\left(\int_{D_{j}}+\frac12\int_{D_0}\right) d\zeta &=& 0, \text{ for all } j=1,\dots,g+2\,.
\end{eqnarray*}
All the periods of the $d\zeta$, both polar and cyclic, vanish and therefore its 
integral is a single valued function on the surface $M({\sf E_0})$:
\be
\zeta(P):=\frac12\left(\int_{P_0}^P+\int_{JP_0}^P\right)d\zeta\,, \text{ with } \zeta(P_0)=p_0.
\ee
The differential $d\zeta$ is odd with respect to the hyperelliptic involution $J$ and so is its integral 
$\zeta(P)$ for the chosen constant of integration. The only possible singularities of the meromorphic function 
$\zeta(P)$ are simple
poles at the branchpoints of $M$, whose number is not greater than 
$2g+2$. It is strictly less than the $2g+4$ zeros of $\zeta(P)$, which cover all the endpoints $x=p_s$ of the integration paths $D_s$ in formulas above.
Hence $d\zeta$ and therefore the annihilating tangent vector vanish.

We conclude that the set locally defined by fixing the values of all periods of the differential 
$d\eta({\sf B,E})$ is a smooth complex analytic manifold of dimension $g+2$ in the fibration over the space 
$\tilde{\cal H}_g$. It remains to show that it does not spoil under the projection to the base 
$\tilde{\cal H}_g$. If the isoperiodic manifold had two points gluing under the 
projection or it had a vertical tangent, that would mean the existence of a non zero holomorphic differential with vanishing periods. 
The latter is prohibited by the Riemann bilinear relations.
\end{proof}

Note that isoperiodic (or Pell-Abel) manifolds $\tilde{\mathscr{A}}_{g}^{n}$ are invariant under the action 
the $1$-dimensional affine group $ {\sf E}\to a{\sf E}+b$ with $(a,b)\in\CC^*\times\CC$. 
Indeed, this transformation does not change the conformal structure on the  Riemann surface with the marked point 
at infinity. Hence, the distinguished differential and all its periods survive under this map.  

\begin{cor}
 The quotient $\mathscr{A}_{g}^{n}$ of $\tilde{\mathscr{A}}_{g}^{n}$ by the action of the affine group is a smooth orbifold of complex dimension $g$.
\end{cor}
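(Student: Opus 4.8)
The plan is to realize $\mathscr{A}_{g}^{n}$ as the quotient of the smooth $(g+2)$-dimensional manifold $\tilde{\mathscr{A}}_{g}^{n}$ of Theorem~\ref{PAmanifold} by a \emph{proper} holomorphic action of the $2$-dimensional affine group $G=\Aff(\CC)\cong\CC^*\ltimes\CC$ having \emph{finite} stabilizers, and then to invoke the slice theorem for such actions. Granting properness and finiteness of stabilizers, the dimension count is immediate: the stabilizers being finite, each orbit is a $2$-dimensional complex submanifold, so the quotient has complex dimension $(g+2)-2=g$.

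First I would record that the stabilizers are finite. An element $\gamma\in G$ fixing a point $D\in\tilde{\mathscr{A}}_{g}^{n}$ permutes its set $\sf E$ of $2g+2\ge 2$ distinct roots; since an affine map of $\CC$ is determined by its values at any two distinct points, the restriction of $\gamma$ to $\sf E$ determines $\gamma$, so the stabilizer of $D$ injects into the symmetric group $\mathrm{Sym}(\sf E)$ and is therefore finite.

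The technical heart is properness, and here I would split $G$ using two natural invariants of a configuration: the barycenter $\mu({\sf E})=\tfrac{1}{2g+2}\sum_{e\in{\sf E}}e$, which transforms as $\mu\mapsto a\mu+b$ under $x\mapsto ax+b$, and the variance $\sigma({\sf E})=\sum_{e\in{\sf E}}|e-\mu({\sf E})|^2$, which transforms as $\sigma\mapsto|a|^2\sigma$ and is strictly positive because the $2g+2\ge2$ roots are distinct. No nonzero translation preserves a finite set, so the translation subgroup acts freely; the derivative of $\mu$ in the translation direction is an isomorphism, hence $\mu$ is a submersion and $S:=\{\mu=0\}\cap\tilde{\mathscr{A}}_{g}^{n}$ is a smooth global slice of dimension $g+1$, on which the residual group $\CC^*$ acts by scaling. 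To see that this $\CC^*$-action is proper, take $a_k\in\CC^*$ and centered configurations with $a_k\cdot{\sf E}_k\to{\sf E}'$ and ${\sf E}_k\to{\sf E}$ inside $S$; then $|a_k|^2=\sigma(a_k\cdot{\sf E}_k)/\sigma({\sf E}_k)\to\sigma({\sf E}')/\sigma({\sf E})$, a positive finite limit, while $\arg a_k$ ranges in the compact circle, so $(a_k)$ has a subsequence converging in $\CC^*$. This is exactly properness of the $\CC^*$-action on $S$, and hence of the full $G$-action on $\tilde{\mathscr{A}}_{g}^{n}$.

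Finally I would conclude by the slice theorem for proper holomorphic actions of complex Lie groups with finite stabilizers: near each orbit $\CC^*\cdot D\subset S$ with finite (cyclic) stabilizer $\Gamma_D$, there is a $\Gamma_D$-invariant holomorphic transversal $\Sigma_D$ of dimension $(g+1)-1=g$ whose saturation is an open $\CC^*$-invariant neighborhood, and the quotient of that neighborhood is biholomorphic to $\Sigma_D/\Gamma_D$. These charts endow $\mathscr{A}_{g}^{n}=\tilde{\mathscr{A}}_{g}^{n}/G$ with the structure of a complex orbifold of dimension $g$, with holomorphic transition maps. The main obstacle is the properness estimate via $\mu$ and $\sigma$; the finiteness of stabilizers and the dimension count are routine, and the orbifold charts are then a formal consequence of the slice theorem.
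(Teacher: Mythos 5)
Your proof is correct and takes essentially the same route as the paper: both deduce the corollary from Theorem~\ref{PAmanifold} together with the observation that the affine group acts on the set of $2g+2$ distinct roots with finite stabilizers, giving the dimension count $(g+2)-2=g$. The paper's own proof is a one-liner that leaves properness of the action implicit; your barycenter/variance argument and the appeal to the holomorphic slice theorem simply make that standard but genuinely needed ingredient explicit.
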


\begin{proof}
 This follows directly from Theorem~\ref{PAmanifold} and the fact that the action on the 
  affine group on any  set of $2g+2$ points in the plane has finite stabilizer.
\end{proof}

\section{Pictorial representation}
 \label{sec:graphcalcul}
In this section we introduce a pictorial representation for the description of the moduli space of hyperelliptic Riemann surfaces $M_{\infty}$ carrying a couple of marked points $\infty_\pm$ conjugated by the hyperelliptic involution. To such a  Riemann surface we  associate the planar graph whose edges are critical
leaves of the vertical and horizontal foliations of the distinguished quadratic differential $(d\eta_M)^2$ introduced in Section~\ref{sec:solv}. We will totally characterize such graphs
and each of them will come from a unique, up to the action of the affine group, pointed  Riemann surface~$M_{\infty}$. 

Originally this graphic language was designed in \cite{B03, Bbook} for the theory of real extremal polynomials, where the problem of 
 Riemann surfaces deformation with control of the periods exists too. It turned out to be very useful in the investigation of the global periods map, in particular for its 
image \cite{B03} and study of the topology of its fibers~\cite{B19}.

\subsection{Global width function}
Let $M_{\infty}$ be a hyperelliptic Riemann surface with marked points $\infty_\pm$ and $d\eta$ be its distinguished differential.
Given a branch point $e\in{\sf E}$, we define the \emph{width function} $W: \CC \to \RR_{+}$  by
\begin{equation}
 \label{W}
W(x)=\left| Re\int_{(e,0)}^{(x,w)} d\eta\right| \,.
\end{equation}
One immediately checks that the normalization conditions of the distinguished differential
imply that the width function satisfies the following properties:
\begin{enumerate}[i)]
\item $W$ is a well-defined single valued function on the plane.
\item $W$ is harmonic outside its zero set
$\Gamma_\vert:=\{x\in\mathbb{C}: W(x)=0\}$.
\item $W$ has a logarithmic pole at infinity.
\item $W$ vanishes at each branch point $e'\in{\sf E}$.
\end{enumerate}

We only comment on the Property~iv). Since $d\eta$ is odd with respect to hyperelliptic involution, the value $W(e')$ is equal to one half of the modulus of the real part of some period of $d\eta$. Since  all its periods are purely imaginary,  this gives iv). Moreover, this implies that the width function is independent on the
choice of the branch point $e$ as initial point of integration.

\subsection{Construction of the associated graph $\Gamma(M)$.}

Recall that a quadratic differential induces a vertical and a horizontal foliations (see \cite{strebel} for a detailed discussion).
The level lines of the width function  are the trajectories of the vertical foliation of the
distinguished quadratic differential $(d\eta)^2$, while the steepest descent lines of $W(x)$ are its horizontal trajectories.

To any  Riemann surface $M$ we associate a weighted \emph{planar}
graph $\Gamma=\Gamma(M)$ which is a union of a 'vertical' subgraph $\Gamma_\vert$ and
a 'horizontal' subgraph $\Gamma_\hor$. The precise definition is given below and 
examples of such graphs are given in Figure~\ref{Graph123}.
\begin{defn}
Let $M$ be the hyperelliptic Riemann surface given by Equation~\eqref{M}. Its \emph{associated graph}  $\Gamma(M)$ is the weighted planar graph constructed as follows:
\begin{itemize}
\item  The vertical edges are the unoriented arcs of the  zero set of $W(x)$ (they are
segments of the vertical foliation of  $(d\eta)^2$). 
\item The horizontal edges are the segments of the horizontal foliation
of $(d\eta)^2$ 
connecting saddle points of the function $W$ to the zero level set of $W$
(and may occasionally hit other saddle points on its path). The horizontal edges are oriented with
respect to the growth of $W(x)$.
\item  The vertices of the graph $\Gamma$ are the union of the finite points of
the divisor of $(d\eta)^2$ and the points in
$\Gamma_\vert\cap\Gamma_\hor$, i.e. projections of the saddle points of
$W$ to its zero set along the horizontal leaves.
\item Each edge $R$ of the graph is equipped with its
length $h(R)$ in the metric $ds:=|d\eta|$ induced by~$(d\eta)^2$.
\end{itemize}
\end{defn}

\begin{conv}\label{conv:grphrep}
 In the figures, we draw the vertical edges of the canonical graph with double lines.
 The horizontal edges are represented by  single line with an arrow showing their orientation. The weight of a vertical edge $R$ is denoted by $h(R)$.
We usually do not put the values of the horizontal weights on the figures.
\end{conv}

  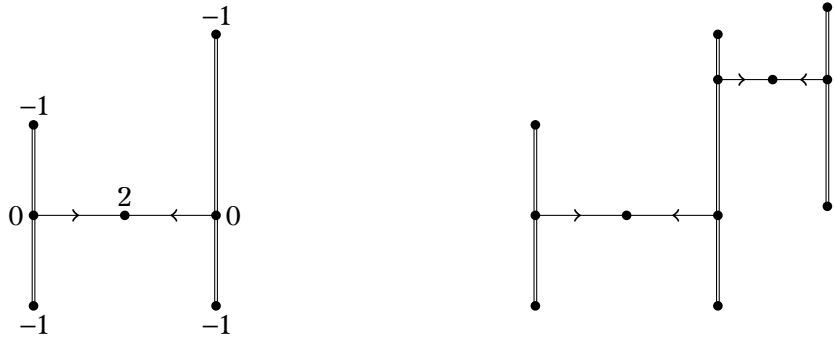
\begin{figure}[ht]
 \centering
\begin{tikzpicture}[scale=1.2,decoration={
    markings,
    mark=at position 0.5 with {\arrow[thick]{>}}}]

    \begin{scope}[xshift=-4.5cm]
     \coordinate (p0) at (0,-1);\node[below] at (p0) {$-1$};
\coordinate (p1) at (0,1);\node[above] at (p1) {$-1$};
\coordinate (p2) at (2,-1);\node[below] at (p2) {$-1$};
\coordinate (p3) at (2,2);\node[above] at (p3) {$-1$};
\coordinate (q1) at (1,0);\node[above] at (q1) {$2$};
\coordinate (q0) at (0,0);\node[left] at (q0) {$0$};
\coordinate (q2) at (2,0);\node[right] at (q2) {$0$};

\draw[postaction={decorate}] (q0) -- (q1) coordinate[pos=.5] (s1);\draw[postaction={decorate}] (q2) -- (q1) coordinate[pos=.5] (s2);
\draw[double] (p0) --(p1)coordinate[pos=.5] (r1);\draw[double] (p2) --(p3)coordinate[pos=.5] (r2);

      \foreach \i in {0,1,...,3}
   \fill (p\i)  circle (1.5pt);
   \foreach \i in {0,1,2}
  \fill (q\i)  circle (1.5pt);
    \end{scope}
    
        \begin{scope}[xshift=1cm]
     \coordinate (p0) at (0,-1);
\coordinate (p1) at (0,1);
\coordinate (p2) at (2,-1);
\coordinate (p3) at (2,2);
\coordinate (q1) at (1,0);
\coordinate (q0) at (0,0);
\coordinate (q2) at (2,0);

\draw[postaction={decorate}] (q0) -- (q1) coordinate[pos=.5] (s1);\draw[postaction={decorate}] (q2) -- (q1) coordinate[pos=.5] (s2);
\draw[double] (p0) --(p1)coordinate[pos=.5] (r1);\draw[double] (p2) --(p3)coordinate[pos=.5] (r2);

      \foreach \i in {0,1,...,3}
   \fill (p\i)  circle (1.5pt);
   \foreach \i in {0,1,2}
  \fill (q\i)  circle (1.5pt);
  
    \coordinate (r0) at (2,1.5);
\coordinate (r1) at (2.6,1.5);
\coordinate (r2) at (3.2,1.5);
\coordinate (r3) at (3.2,2.3);
\coordinate (r4) at (3.2,.1);

\draw[postaction={decorate}] (r0) -- (r1);\draw[postaction={decorate}] (r2) -- (r1);
\draw[double] (r2) --(r3);\draw[double] (r2) --(r4);
      \foreach \i in {0,1,2,3,4}
   \fill (r\i)  circle (1.5pt);
    \end{scope}

\end{tikzpicture}
\caption{Typical graphs associated to  Riemann surfaces of genera $1$ and $2$ are shown without their weights. For every vertex $V$ of the first graph, the value of $\ord(V)$ is given.}  \label{Graph123}
\end{figure}

From the local behaviour of the trajectories one immediately checks that
for any vertex $V\in\Gamma$ its multiplicity in the divisor of $(d\eta)^2$ 
is given by
\be
\ord (V):= d_\vert(V)+2d_{in}(V)-2\,,
\label{ordV}
\ee
where $d_\vert$ is the degree of the vertex with respect to the
vertical edges and $d_{in}$ is the number of incoming horizontal
edges. The branch points of $M$ correspond to the vertices~$V$  with the odd value of $\ord(V)$ and automatically lie on the vertical part of the graph~$\Gamma$. One can check it for the graphs represented in Figure~\ref{Graph123}.

\subsection{Admissible graphs}
The graphs $\Gamma(M)$ associated to hyperelliptic Riemann surfaces by the previous construction can be described in an axiomatic way.  
There are five conditions, three on its topology (T) and two on its weights (W).
\begin{thrm}\label{thm:five}
A weighted planar graph $\Gamma$ considered as a topological object (up to isopoty of the plane) 
is associated to a hyperelliptic Riemann surface $M$  if and only if the following five conditions are satisfied.
\begin{itemize}
 \item[(T1)]    The graph $\Gamma$ is a tree.
 \item[(T2)]   The horizontal edges leaving the same vertex are separated by a vertical or an incoming edge.
 \item[(T3)]  If $\ord (V)=0$ then $V\in\Gamma_\hor\cap\Gamma_\vert$. 
 \item[(W1)] The width function increases along oriented edges and $W(V)=0$ if $V$ lies on the vertical part of the graph. 
 \item[(W2)] The weights of vertical edges are positive and their total sum is $\pi$. 
\end{itemize}
Given a graph $\Gamma$ satisfying all five conditions, the  Riemann surface $M$ whose associated graph is $\Gamma$ is unique up to the action
of the linear maps $\Aff(1,\CC)$ on the branching set $\sf E$.
\end{thrm}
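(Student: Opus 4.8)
The plan is to prove both implications by passing to the real part of the (multivalued) primitive $\eta:=\int d\eta$. Since all periods of $d\eta$ are purely imaginary, $\Reel\,\eta$ is a single-valued harmonic function on the twice-punctured surface $M$, odd under $J$, so that $W=|\Reel\,\eta|$ descends to the sphere; its level lines are the vertical trajectories and its steepest-descent lines the horizontal ones. Thus $\Gamma$ is exactly the critical graph of the flat metric $|d\eta|$, and the goal is to show that the five conditions are precisely the constraints that allow one to read off this flat metric from $\Gamma$ alone.

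For necessity I verify the conditions one by one on $\Gamma=\Gamma(M)$. Conditions (W1) and (T3) are immediate from the definitions: vertical edges lie in $\{W=0\}$, horizontal edges are oriented by the growth of $W$, and a vertex with $\ord(V)=0$ is a regular point of $(d\eta)^2$, hence is declared a vertex only when it lies in $\Gamma_\vert\cap\Gamma_\hor$. Condition (T2) is purely local: near a vertex with $\ord(V)=k$ the differential looks like $z^{k}(dz)^2$, whose horizontal and vertical prongs interleave, and formula~\eqref{ordV} then forces two outgoing horizontal edges to be separated by a vertical or an incoming one. For (T1) I use that $W=|\Reel\,\eta|$ is subharmonic on $\CC$, nonnegative, vanishing exactly on $\Gamma_\vert$, with a logarithmic pole at infinity: a cycle in $\Gamma$ would bound a complementary face on which $W$ is harmonic and nonnegative, contradicting the maximum principle together with the monotonicity along the incident horizontal edges, while connectedness follows because $S^2\setminus\Gamma_\vert=\{W>0\}$ is connected and $\Gamma_\hor$ is its steepest-descent skeleton emanating from the single source at infinity. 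Finally (W2) is a residue computation: the imaginary period of $d\eta$ around $\infty_+$ has magnitude $2\pi$, and since the double cover $M\to S^2$ makes the boundary circle of the cylindrical end traverse each vertical edge twice, the total vertical weight is $\pi$.

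For sufficiency I reconstruct $M$ from $\Gamma$. Because $\Imagin\,\eta$ changes by $\pm2\pi$ around $\infty_\pm$, the subsurface $\{\Reel\,\eta>0\}$ should develop under $\eta$ onto the half-cylinder $\Pi:=\{\Reel z>0\}/2\pi i\ZZ$ slit along finitely many horizontal segments. I therefore build $\Pi$ directly from $\Gamma$: the planar (cyclic) structure of $\Gamma$ orders the vertical edges around the boundary circle $\{\Reel z=0\}$, their weights $h(R)$ sum to $\pi$ by (W2) and each occurs twice, giving a boundary metric of total length $2\pi$; condition (W1) assigns to every saddle its real coordinate so that the horizontal edges become horizontal slits opening toward the cylindrical end; (T2) guarantees that the local charts at each vertex glue without overlap, and (T3) accounts for the regular crossing vertices. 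Gluing this flat half-cylinder-with-slits to its mirror image along the common boundary $\Gamma_\vert$ yields a closed flat surface carrying a meromorphic quadratic differential with a double pole at each cylinder end. The reflection is an involution $J$ whose quotient is the sphere and whose fixed points are the odd-order vertices, so the surface is the hyperelliptic $M_\infty$, the differential is a scalar multiple of $(d\eta_M)^2$, and by construction its associated graph is $\Gamma$.

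Uniqueness up to $\Aff(1,\CC)$ follows because the reconstruction recovers the flat structure, hence the conformal structure and the marked pair $\infty_\pm$, uniquely from $\Gamma$; the residual freedom is exactly the scaling of $d\eta$ together with the affine action on the branch set ${\sf E}$, which does not change $M_\infty$ as a marked surface. The main obstacle is the central part of the sufficiency argument: verifying that the half-cylinder-with-slits assembled from $\Gamma$ is a genuine flat surface. One must check that the developing map is well defined—here (T1), the tree property, removes the monodromy—that the slits prescribed by (W1) and (T2) neither cross nor create spurious cone points, and that after gluing the two copies the ends at infinity are simple poles of $d\eta$ with residues $\mp1$, so that the reconstructed surface is honestly hyperelliptic rather than merely an orbifold quotient.
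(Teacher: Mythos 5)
Your verification of (T2), (T3), (W1) and (W2) runs along the same lines as the paper's. However, the connectedness half of (T1) is a non sequitur as you state it: connectedness of $\{W>0\}=\CC\setminus\Gamma_\vert$ says nothing about connectedness of $\Gamma$ (two disjoint vertical segments also have connected complement), and calling $\Gamma_\hor$ a ``steepest-descent skeleton emanating from the single source at infinity'' is a description, not an argument. The paper closes this point by a degree count: summing Formula~\eqref{ordV} over all vertices gives $2\sharp\{\text{edges}\}-2\sharp\{\text{vertices}\}=-2\sharp\{\text{components of }\Gamma\}$, and this sum must equal the finite part of the degree of the divisor of $(d\eta)^2$, namely $-4+2=-2$; hence one component. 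Your maximum-principle argument for the absence of bounded faces can replace the paper's Green's-formula computation, but you still need an argument of the above kind for connectedness.

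The serious gap is in the sufficiency direction. Your starting claim --- that $\{\Reel\,\eta>0\}$ develops under $\eta$, single-valued modulo $2\pi i\ZZ$, onto a half-cylinder slit along horizontal segments --- is false whenever $\Gamma_\vert$ is disconnected, which is the generic situation (a generic graph has $g+1$ vertical components). The monodromy of $\eta$ around a single vertical component $\Gamma_\vert^1$ equals $2i$ times the sum of the weights of the edges of $\Gamma_\vert^1$, which by (W2) lies strictly between $0$ and $2\pi i$; only the loop around the \emph{whole} graph has period $\pm 2\pi i$. The function $\eta \bmod 2\pi i\ZZ$ is single-valued only on the complement $\CC\setminus\Gamma$ of the full graph, and then the two banks of one and the same horizontal edge develop onto slits at \emph{different} heights of the cylinder, their heights differing by the monodromy around the subtree which that edge cuts off. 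Consequently your gluing recipe --- double the half-cylinder-with-slits across the boundary circle $\Gamma_\vert$ --- cannot produce a closed surface: every slit bank remains free, and the essential data, namely which slit bank is identified with which and by which imaginary translation, is never specified. That pairing-by-translations is exactly the nontrivial combinatorial content of the reconstruction, and it is where (T1) and (T2) genuinely enter. The paper organizes it by passing to the extended graph $\Ext\Gamma$: adding outgoing horizontal rays to infinity cuts the plane into cells each bounded by a single vertical edge $R$, and the surface is assembled from the half-strips $\Sigma(h(R))$ glued by translations along horizontal edges and $\pi$-rotations along vertical ones; the double cover ramified at the odd-order singularities then gives $M$. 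You flag this step yourself as ``the main obstacle'', but flagging it does not supply the missing identifications, and the uniqueness statement, which you derive from the reconstruction, inherits the same gap.
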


\begin{rmk}
These conditions imply some basic restrictions on the graphs $\Gamma(M)$.
For instance, there are no pendent horizontal edges like~~ 
\begin{picture}(7,3)
\put(0,1){\circle*{1}}
\put(0,1){\vector(1,0){5}}
\end{picture} 
~or~ 
\begin{picture}(7,3)
\put(0,1){\circle*{1}}
\put(5,1){\vector(-1,0){5}}
\end{picture}. 
The first case is prohibited by (T2),  while the second one is prohibited by  (T3). 
\end{rmk}

\begin{proof}
We give a sketch of the proof for completeness and the reader can look at  \cite{B03, Bbook} for a more detailed description. 

\paragraph{Constraints on associated graph.}
We say a few words about the genesis of properties (T1), (T2), (W2). The properties (T3) and (W1) directly follow from the definition of the graph $\Gamma$.
\smallskip
\par
\paragraph{Property (T1).} Suppose that the complement $\CC \setminus\Gamma(M)$ of the graph 
is not connected.  Let us calculate the Dirichlet integral of 
the width function in a bounded component $\Omega$ of the complement
by means of Green's formula:
$$
\int_\Omega |grad~W(x)|^2 d\Omega = 
\int_{\partial\Omega} W(x)\frac{\partial W}{\partial n} ~ds \,. 
$$
The function $W$ vanishes on the vertical parts of the boundary while its normal derivative vanishes at the horizontal parts of ${\partial\Omega}$.
This would imply that $W$ is constant. Now suppose that the graph has several components. Summing up the values 
of $\ord(V)$ over all its vertices, we get by Equation~\eqref{ordV} that 
$$
2\sharp\{\text{vertical edges}\}+2\sharp\{\text{horizontal edges}\}-2\sharp\{\text{vertices}\}=
-2\sharp\{\text{components of }\Gamma\} \,.
$$ 
This value equals to the  degree of the divisor of $(d\eta_M)^2$ on the sphere (i.e. -4) plus the order of its pole at infinity (i.e. 2).  
Hence, the graph $\Gamma$ has just one component and it is a single tree.
\smallskip
\par
\paragraph{Property (T2).}  Let $V$ be a vertex of $\Gamma$ such that $W(V)>0$.  This is a saddle point of the width function, the meeting point of 
several alternating ``ridges'' and ``valleys''. A horizontal edge comes into $V$ from each valley by definition. The outgoing edge (if any) goes along the ridge, so any two of them are separated. Same is true for $W(V)=0$ with vertical edges coming from each ``valley''.
\smallskip
\par
\paragraph{Property (W2).}  The integral of $(d\eta)^{2}$ along the boundary of the plane cut along $\Gamma_\vert$ equals~$2i$ times the sum of the weights of all vertical edges.   
The integration path may be contracted to the path encompassing the pole at infinity, hence by the residue theorem is $2i\pi$. 
\smallskip
\par
\paragraph{From the graph to the  Riemann surface.}
The Riemann surface $M$ may be glued from a finite number of stripes
in a way determined by combinatorics and weights of the graph. 
Below we briefly describe the procedure.

\begin{figure}
 \centering
\begin{tikzpicture}[scale=1.2,decoration={
    markings,
    mark=at position 0.5 with {\arrow[very thick]{>}}}]

    \coordinate (p0) at (0,-1);
\coordinate (p1) at (0,1);
\coordinate (p2) at (2,-1);
\coordinate (p3) at (2,2);
\coordinate (q1) at (1,0);
\coordinate (q0) at (0,0);
\coordinate (q2) at (2,0);

\draw[postaction={decorate}] (q0) -- (q1) coordinate[pos=.5] (s1);\draw[postaction={decorate}] (q2) -- (q1) coordinate[pos=.5] (s2);
\draw[double] (p0) --(p1)coordinate[pos=.5] (r1);\draw[double] (p2) --(p3)coordinate[pos=.5] (r2);

      \foreach \i in {0,1,...,3}
   \fill (p\i)  circle (1.5pt);
   \foreach \i in {0,1,2}
  \fill (q\i)  circle (1.5pt);

\draw[postaction={decorate},dashed] (p0) -- ++(0,-1);
\draw[postaction={decorate},dashed] (p1) -- ++(0,2);
\draw[postaction={decorate},dashed] (p2) -- ++(0,-1);
\draw[postaction={decorate},dashed] (p3) -- ++(0,1);
\draw[postaction={decorate},dashed] (q0) -- ++(-1.1,0);
\draw[postaction={decorate},dashed] (q1) -- ++(0,3);
\draw[postaction={decorate},dashed] (q1) -- ++(0,-2);
\draw[postaction={decorate},dashed] (q2) -- ++(1.1,0);

\foreach \i in {0,1,...,3}
   \fill (p\i)  circle (1.5pt);
  \fill (q1)  circle (1.5pt);

     \begin{scope}[xshift=5.5cm]
     \coordinate (p0) at (0,-1);
\coordinate (p1) at (0,1);
\coordinate (p2) at (2,-1);
\coordinate (p3) at (2,2);
\coordinate (q1) at (1,0);
\coordinate (q0) at (0,0);
\coordinate (q2) at (2,0);

\draw[postaction={decorate}] (q0) -- (q1) coordinate[pos=.5] (s1);\draw[postaction={decorate}] (q2) -- (q1) coordinate[pos=.5] (s2);
\draw[double] (p0) --(p1)coordinate[pos=.5] (r1);\draw[double] (p2) --(p3)coordinate[pos=.5] (r2);

      \foreach \i in {0,1,...,3}
   \fill (p\i)  circle (1.5pt);
   \foreach \i in {0,1,2}
  \fill (q\i)  circle (1.5pt);
  
    \coordinate (r0) at (2,1.5);
\coordinate (r1) at (2.6,1.5);
\coordinate (r2) at (3.2,1.5);
\coordinate (r3) at (3.2,2.3);
\coordinate (r4) at (3.2,.1);

\draw[postaction={decorate}] (r0) -- (r1);\draw[postaction={decorate}] (r2) -- (r1);
\draw[double] (r2) --(r3);\draw[double] (r2) --(r4);
      \foreach \i in {0,1,2,3,4}
   \fill (r\i)  circle (1.5pt);
   
   \draw[postaction={decorate},dashed] (p0) -- ++(0,-1);
\draw[postaction={decorate},dashed] (p1) -- ++(0,2);
\draw[postaction={decorate},dashed] (p2) -- ++(0,-1);
\draw[postaction={decorate},dashed] (p3) -- ++(0,1);
\draw[postaction={decorate},dashed] (q0) -- ++(-1.2,0);
\draw[postaction={decorate},dashed] (q1) -- ++(0,3);
\draw[postaction={decorate},dashed] (q1) -- ++(0,-2);
 \draw[postaction={decorate},dashed] (q2)   .. controls ++(0:.4) and ++(90:.5) ..  (2.3,-2.01);
 
 \draw[postaction={decorate},dashed] (r1) -- ++(0,1.45);
  \draw[postaction={decorate},dashed] (r1) -- ++(0,-3.55);
   \draw[postaction={decorate},dashed] (r0) .. controls ++(180:.5) and ++(-90:.5) ..  (1.55,3);
     \draw[postaction={decorate},dashed] (r2) -- ++(.8,0);
          \draw[postaction={decorate},dashed] (r3) -- ++(0,.6);
                    \draw[postaction={decorate},dashed] (r4) -- ++(0,-2.15);
    \end{scope}

\end{tikzpicture}
 \caption{The extensions of the graphs of figure~\ref{Graph123}.}
 \label{ExtGraph}
\end{figure}
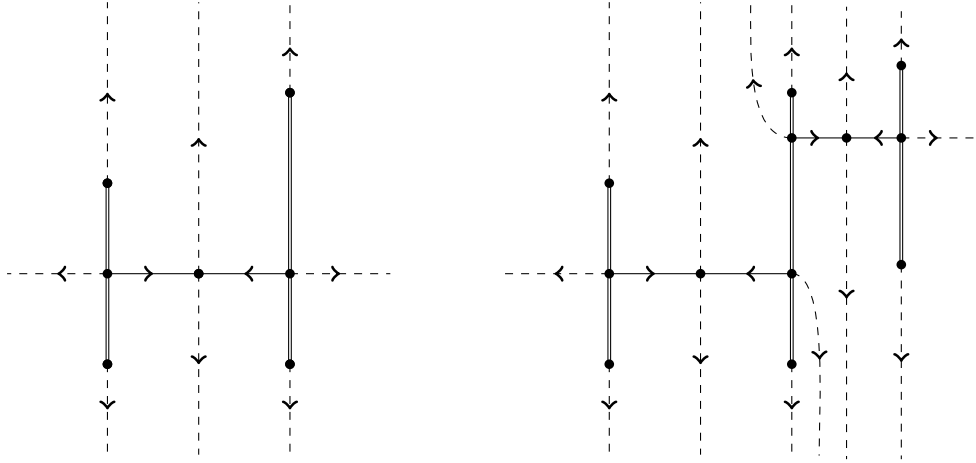

Given a planar graph satisfying the above five conditions, we extend it by drawing $d_\vert(V)-d_{out}(V)+d_{in}(V)\ge0$ outgoing 
horizontal arcs which connect each vertex $V$ to infinity and are disjoint except possibly at their endpoints. For each vertex 
we require that all the outgoing edges of this 
\emph{extended graph} $\Ext\Gamma$, old and new, alternate with the incident edges of other types: incoming or vertical, so that the graph $\Ext\Gamma$ satisfies Property~(T2). Since the original graph is a tree, the extended graph is unique up to isotopy of the plane. Typical  examples for $g=1$ and $g=2$ are given at Figure~\ref{ExtGraph}. 

From the topological viewpoint all the components of the complement to the extended graph in the plane have the same structure. 
They are $2$-cells bounded by exactly one vertical edge $R$ and two finite chains of horizontal edges attached to the endpoints of~$R$, all pointing away from 
the vertical edge and meeting at infinity.  For each cell we denote by $h(R)$ the weight of the corresponding vertical edge and define the half-strip for $h=h(R)$
by
$$
\Sigma(h)= \{\eta\in\mathbb{C}: \Reel \eta>0 \text{ and } 0<\Imagin \eta<h\}\,.
$$
We glue these $2\sharp\{\text{vertical edges}\}$ half-strips by translations along the horizontal edges and rotation of angle $\pi$ along the vertical edges as indicated by
the graph $\Ext\Gamma$. This flat  structure on the Riemann sphere has $2g+2$ singularities of odd order and is well defined up to the action of the affine group. 
The  Riemann surface $M$ is defined to be the double cover ramified at these points. The distinguished quadratic differential on the Riemann sphere is the one whose flat structure has been just defined.
\end{proof}

\begin{rmk}
1) The axiomatic description of the graphs $\Gamma$ appearing as associated graphs of  Riemann surfaces, including the five constraints
(T1,T2,T3,W1,W2) and the realization theorem, were first established for the 
Riemann surfaces admitting an anticonformal  involution (i.e. reflection) in  \cite{Bbook,B03}. The purely complex case is somewhat simpler as we should not keep in mind this mirror symmetry and arising additional topolological invariants, splitting of homology, etc. 

2) An interesting enumerative problem related to associated graphs arises: compute the number of (stable) combinatorial graphs $\Gamma$ associated to the  Riemann surfaces $M$ of genus $g$. Same holds for real curves with given genus and the number of real ovals.   
\end{rmk}

\subsection{Period mapping in terms of graphs}
\label{sec:graphtoperiod}

In this section we explain how to compute the periods of the distinguished differential from a graph satisfying the conditions of Theorem~\ref{thm:five}.

\subsubsection{Homology basis associated to a graph}\label{sec:homoassgraph}
Given a graph $\Gamma=\Gamma(M)$, we associate a set of $2g+2$ cycles on the twice punctured surface $M=M_{\infty}\setminus\infty_\pm$ which generate its integer 
homology group $H_1(M, \mathbb{Z})=\mathbb{Z}^{2g+1}$. This set is unique if all the branchpoints are pendent (degree one) vertices of the graph which is a generic case, see example on Figure~\ref{HomBasis}.

We denote the complex plane cut along the vertical part of the graph by 
$M^+:=\mathbb{C}\setminus\Gamma_\vert$.  The Riemann surface $M$ is obtained by gluing two copies of $M^+$ along the cuts in a criss-cross manner.

Suppose that we travel counterclockwise  along the boundary of the plane cut along the whole graph $\Gamma$. We meet each branching point $e$ exactly once, provided each of those are hanging vertices of the tree.
So all the branchpoints become cyclically ordered e.g. 
$e_1,e_2,\dots,e_{2g+1},e_{2g+2},e_{2g+3}=e_1$. In case there are interior branching points, 
some points $e\in{\sf E}$ will be listed more than once and we eliminate all duplicates in an arbitrary way.  
We again get a cyclic order of all the branchpoints, however not a unique one.

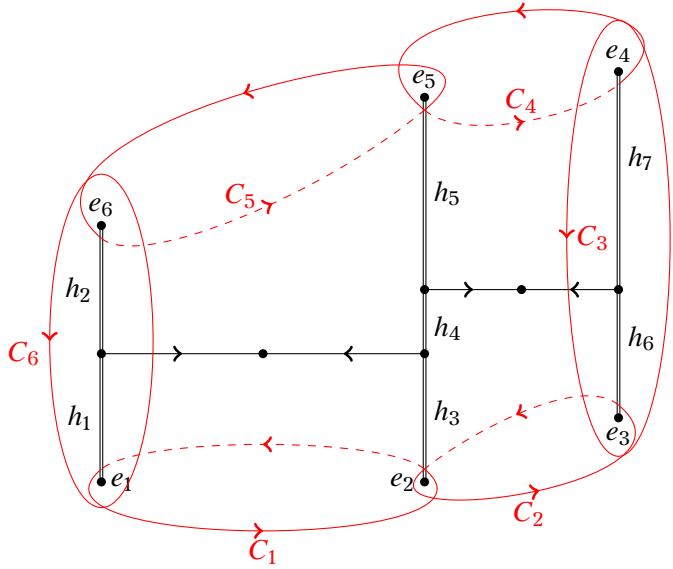
\begin{figure}[ht]
 \centering
\begin{tikzpicture}[scale=1.7,decoration={
    markings,
    mark=at position 0.5 with {\arrow[very thick]{>}}}]
     \coordinate (p0) at (-.5,-1);
\coordinate (p1) at (-.5,1);
\coordinate (p2) at (2,-1);
\coordinate (p3) at (2,2);
\coordinate (p4) at (3.5,-.5);
\coordinate (p5) at (3.5,2.2);
\coordinate (q1) at (.75,0);
\coordinate (q0) at (-.5,0);
\coordinate (q2) at (2,0);
\coordinate (q3) at (2,.5);
\coordinate (q4) at (2.75,.5);
\coordinate (q5) at (3.5,.5);

\draw[postaction={decorate}] (q0) -- (q1) coordinate[pos=.5] (s1);\draw[postaction={decorate}] (q2) -- (q1) coordinate[pos=.5] (s2);
\draw[double] (p0) --(p1)coordinate[pos=.5] (r1) coordinate[pos=.25] (h1) coordinate[pos=.75] (h2);\draw[double] (p2) --(p3)coordinate[pos=.5] (r2) coordinate[pos=.18] (h3)  coordinate[pos=.4] (h4) coordinate[pos=.75] (h5);
\draw[postaction={decorate}] (q3) -- (q4) coordinate[pos=.5] (s3);\draw[postaction={decorate}] (q5) -- (q4) coordinate[pos=.5] (s4);
\draw[double] (p4) --(p5)coordinate[pos=.5] (r3) coordinate[pos=.23] (h6) coordinate[pos=.75] (h7);

\foreach \i in {0,1,...,5}
   \fill (p\i)  circle (1pt);
   \foreach \i in {0,1,...,5}
  \fill (q\i)  circle (1pt);

\node[right] at (p0) {$e_{1}$};
\node[above] at (p1) {$e_{6}$};
\node[left] at (p2) {$e_{2}$};
\node[above] at (p3) {$e_{5}$};
\node[below] at (p4) {$e_{3}$};
\node[above] at (p5) {$e_{4}$};

   \draw[postaction={decorate},red,dashed] (-.5,.9) .. controls ++(-40:.4) and ++(-140:1) .. node[above] {$C_{5}$} (2,1.9);
          \draw[postaction={decorate},red] (2,1.9) .. controls ++(40:1.4) and ++(140:1.4) .. (-.5,.9);
 \draw[postaction={decorate},red,dashed] (2,1.9) .. controls ++(-40:.4) and ++(-140:.4) .. node[above] {$C_{4}$} (3.5,2.1);
          \draw[postaction={decorate},red] (3.5,2.1) .. controls ++(40:1.4) and ++(140:1.4) .. (2,1.9);

\draw[postaction={decorate},red] (-.5,-.9) .. controls ++(-140:1) and ++(-40:1)  .. node[below] {$C_{1}$} (2,-.9) ;
\draw[postaction={decorate},red,dashed] (2,-.9) .. controls ++(140:.4) and ++(40:.4) .. (-.5,-.9);
\draw[postaction={decorate},red] (2,-.9) .. controls ++(-140:.8) and ++(-40:1) .. node[below] {$C_{2}$} (3.5,-.4);          \draw[postaction={decorate},red,dashed] (3.5,-.4) .. controls ++(140:.4) and ++(40:.4) .. (2,-.9);

\draw[postaction={decorate},red] (3.5,.9) ellipse (.4cm and 1.7cm) node[left] {$C_{3}$};
\draw[postaction={decorate},red] (-.5,0.1) ellipse (.4cm and 1.3cm);
\node[red]  at (-1.1,0) {$C_{6}$};

\node[left] at (h1) {$h_{1}$};
\node[left] at (h2) {$h_{2}$};
\node[right] at (h3) {$h_{3}$};
\node[right] at (h4) {$h_{4}$};
\node[right] at (h5) {$h_{5}$};
\node[right] at (h6) {$h_{6}$};
\node[right] at (h7) {$h_{7}$};
\end{tikzpicture}
 \caption{Generators of the first homology group of the genus  $g=2$ Riemann surface associated to the generic graph  $\Gamma$.} \label{HomBasis}
\end{figure}

For $ j=1,\dots,2g+2$,  let $c_j$ be any simple arc connecting point $e_j$ to $e_{j+1}$ and disjoint from the graph $\Gamma$ except for its ends. We draw this arc on $M^+$ and then $C_j:=(\Id-J)c_j$ is a closed loop on the surface $M$. Those $2g+2$ loops are represented in Figure~\ref{HomBasis}. They are linearly dependent: both sums of the loops with even/odd indexes are equal to the same loop encircling clockwise the puncture $\infty_+$. There are no other relations between them:
\begin{lmm}
The cycles $C_1,C_2,\dots , C_{2g+1}$ make up a basis of the lattice  $H_1(M,\mathbb{Z})$.
\end{lmm}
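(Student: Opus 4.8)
The plan is to realize $H_1(M,\ZZ)$ as an extension of the homology of the closed surface and thereby reduce the statement to a unimodularity computation together with the precise identification of the loop around a puncture. Recall that $M$ is a genus $g$ surface with the two points $\infty_\pm$ removed, so $H_1(M,\ZZ)\cong\ZZ^{2g+1}$. The inclusion $M\hookrightarrow M_{\infty}$ induces a short exact sequence $0\to\ZZ\gamma\to H_1(M,\ZZ)\to H_1(M_{\infty},\ZZ)\to 0$, in which $\gamma$ is a small loop around $\infty_+$; it is a primitive generator of the kernel, and the loop around $\infty_-$ equals $-\gamma$. Write $\bar{C}_j$ for the image of $C_j$ in $H_1(M_{\infty},\ZZ)$.

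First I would record the intersection numbers. Each cycle $C_j=(\Id-J)c_j$ is the full preimage of the arc $c_j$, i.e. a circle double covering the segment $e_j e_{j+1}$ and ramified at its endpoints. Two such cycles can be pushed off each other unless the underlying arcs share a branch point; consecutive arcs $c_j,c_{j+1}$ meet only at the ramification point over $e_{j+1}$, so $C_j\cdot C_{j+1}=\pm1$, while $C_i\cdot C_j=0$ when $|i-j|\ge 2$ (for $1\le i,j\le 2g+1$ and $g\ge1$ the extreme pair $C_1,C_{2g+1}$ shares no branch point). Hence the Gram matrix of $C_1,\dots,C_{2g+1}$ is skew-symmetric and tridiagonal.

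Next I would prove that $\bar{C}_1,\dots,\bar{C}_{2g}$ form a $\ZZ$-basis of $H_1(M_{\infty},\ZZ)$. Their $2g\times 2g$ intersection matrix $G$ is skew-symmetric and tridiagonal with $\pm1$ off-diagonal entries, and its Pfaffian counts the signed perfect matchings of the path on $2g$ vertices, of which there is exactly one; thus $\det G=1$. Since the intersection form on $H_1(M_{\infty},\ZZ)$ is unimodular, expressing these $2g$ classes in a symplectic basis by a matrix $A$ gives $\det G=(\det A)^2$, forcing $\det A=\pm1$; so the classes are a $\ZZ$-basis.

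Finally I would establish the puncture relation $C_1+C_3+\cdots+C_{2g+1}=\gamma$ (and likewise $C_2+\cdots+C_{2g+2}=\gamma$) and conclude. Both sums lie in the radical of the intersection form, hence in $\ker\bigl(H_1(M,\ZZ)\to H_1(M_{\infty},\ZZ)\bigr)=\ZZ\gamma$, because the symplectic form downstairs is nondegenerate; the real content is that the multiplicity is exactly one. I would obtain this by exhibiting the odd-indexed arcs as the lift to the $\infty_+$-sheet of the boundary of the region cut out by the polygon $c_1\cup\cdots\cup c_{2g+2}$ on the side away from $\infty$, so that their sum is a single simple loop around $\infty_+$. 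Granting this, $\{C_1,\dots,C_{2g},\gamma\}$ is a $\ZZ$-basis of $H_1(M,\ZZ)$ (a lift of the basis above together with the kernel generator), and since $C_{2g+1}=\gamma-(C_1+C_3+\cdots+C_{2g-1})$ the change of basis to $\{C_1,\dots,C_{2g+1}\}$ is unipotent and hence unimodular, which proves the claim. The main obstacle is precisely this last step: the intersection pattern, the Pfaffian count, and the exact sequence are routine, but pinning the coefficient of $\gamma$ to $\pm1$ rather than merely landing in $\ZZ\gamma$ requires careful bookkeeping of sheets and orientations in the double cover, and only the value $\pm1$ makes the final change of basis unimodular.
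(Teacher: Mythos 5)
Your strategy is genuinely different from the paper's, and its logical skeleton is sound. The paper's proof is a one-step duality argument: it introduces relative cycles $D_s=(\Id-J)d_s$ in $H_1(M_\infty,\{\infty_\pm\},\ZZ)$, where $d_s$ runs from $e_s$ to $\infty_+$, and observes that the Poincar\'e--Lefschetz pairing matrix $\|D_s\circ C_j\|_{s,j=1}^{2g+1}$ is bidiagonal with unit entries ($D_s\circ C_j=1$ for $s=j$ or $s=j+1$, zero otherwise), hence has determinant $1$; unimodularity of the pairing then makes the $C_j$ a basis outright. This sidesteps everything you had to work for: no exact sequence, no Pfaffian, and --- crucially --- no need to pin down the puncture class. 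Your route (descend to the closed surface, get $\det G=1$ from the unique perfect matching of the path, lift a basis back through $0\to\ZZ\gamma\to H_1(M,\ZZ)\to H_1(M_\infty,\ZZ)\to 0$, then trade $\gamma$ for $C_{2g+1}$ by a unimodular change of basis) is correct as a scheme and yields extra information along the way, namely that $\bar{C}_1,\dots,\bar{C}_{2g}$ is already a basis of $H_1(M_\infty,\ZZ)$ and the explicit relation among the $C_j$; the price is that it forces you to prove the multiplicity-one statement $C_1+C_3+\cdots+C_{2g+1}=\pm\gamma$, which the paper asserts just before the lemma but never needs in its proof.

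On that last step, your geometric description is wrong as stated, though fixably so. The bounded region $\Omega$ enclosed by the polygon $c_1\cup\cdots\cup c_{2g+2}$ contains no branch points in its interior, so its preimage in $M$ is two disjoint disks, and the boundary of either lift consists of lifts of \emph{all} $2g+2$ arcs, not just the odd ones; that boundary bounds the lifted disk and is therefore nullhomologous, not a loop around $\infty_+$. Moreover ``the $\infty_+$-sheet'' is meaningless for this region, since neither lift of $\Omega$ is adjacent to a puncture --- which is exactly why the bookkeeping felt slippery to you. The clean repair is to cut the plane along the $g+1$ pairwise disjoint odd arcs $c_1,c_3,\dots,c_{2g+1}$, so that $M$ is two copies of the cut plane glued criss-cross and the two sheets are honestly labelled by $\infty_\pm$: then each odd $C_j=(\Id-J)c_j$ is homologous to a small loop in the $\infty_+$-sheet encircling the cut $c_j$, and the sum of these $g+1$ loops is homologous, within that cut sheet, to a single large circle, i.e.\ to a simple loop around $\infty_+$ with multiplicity one. (Equivalently, one can take the boundary of the lift of the \emph{unbounded} complementary region adjacent to $\infty_+$ and then use the nullhomology of the bounded lifts.) With that substitution your argument is complete; alternatively, adopting the paper's dual cycles $D_s$ makes the whole issue evaporate.
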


\begin{proof}
  For every $j=1,\dots,2g+2$ consider the relative cycles $D_{j}$ in the relative  homology group $H_1(M_{\infty},\{\infty_\pm\}, \mathbb{Z})$ given by $D_j:=(\Id-J)d_j$
 where $d_j$ is any simple arc connecting branch point $e_j$ to $\infty_+$ and disjoint from the graph except for its starting point.
There is a pairing between the above two homology groups given by the intersection index.
We compute that $D_s\circ C_j= 1 $ if $s=j$ or $s=j+1$ and  is equal to $0$ for all other indexes. The determinant of the intersection matrix $||D_s\circ C_j||_{s,j=1}^{2g+1}$ equals to $1$.
\end{proof}

\subsubsection{Period mapping for the associated homology basis}\label{sec:perhombas}
Given an admissible graph $\Gamma$, we can calculate the periods of the distinguished differential $d\eta$ along the basic cycles $C_{j}$ introduced in Section~\ref{sec:homoassgraph}. This  differential may be reconstructed from the width function as $d\eta=2\partial W(z)$ on the top sheet $M^+$. On the other sheet it just has the opposite sign.

\begin{lmm} \label{lm:period}
The period of the distinguished differential $d\eta$ along the cycle $C_{j}$ is
\be
\int_{C_j}d\eta=2i\sum_{e_j<R<e_{j+1}} h(R) \,,
\label{period}
\ee
where the summation is taken over all vertical edges $R$ of $\Gamma$ that appear 
when travelling counterclockwise from $e_j$ to $e_{j+1}$ along the bank of~$\Gamma$.
\end{lmm}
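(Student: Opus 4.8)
The plan is to reduce the period of $d\eta$ along $C_j$ to an integral over the single arc $c_j$ on the top sheet, and then to read that integral off the flat geometry encoded by $\Gamma$. Since $C_j=(\Id-J)c_j$, the branch points are fixed by $J$, and $J^{*}d\eta=-d\eta$, I obtain
\[
\int_{C_j}d\eta=\int_{c_j}d\eta-\int_{Jc_j}d\eta=\int_{c_j}d\eta-\int_{c_j}J^{*}d\eta=2\int_{c_j}d\eta,
\]
so it suffices to prove $\int_{c_j}d\eta=i\sum_{e_j<R<e_{j+1}}h(R)$. On $M^{+}$ I use the reconstruction $d\eta=2\partial W$. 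Because all periods of $d\eta$, including those around the punctures, are purely imaginary, the real part $\Reel\eta=W$ is single valued on the connected domain $M^{+}=\CC\setminus\Gamma_\vert$; writing $\eta=W+i\widetilde W$ for a locally defined harmonic conjugate $\widetilde W$ gives $d\eta=dW+i\,d\widetilde W$. Every branch point lies on the zero set $\Gamma_\vert$ of $W$ (Property~iv), so $W(e_j)=W(e_{j+1})=0$ and $\Reel\int_{c_j}d\eta=W(e_{j+1})-W(e_j)=0$, whence $\int_{c_j}d\eta=i\bigl(\widetilde W(e_{j+1})-\widetilde W(e_j)\bigr)$, the change of the harmonic conjugate along $c_j$.

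Next I would compute this change geometrically. The $1$-form $d\widetilde W=\Imagin\,d\eta$ is closed on $M^{+}\setminus\{\infty\}$, so its integral over $c_j$ depends only on the homotopy class of $c_j$ rel endpoints, and I would deform $c_j$ to the path that follows the bank of $\Gamma$ counterclockwise from $e_j$ to $e_{j+1}$. Along a horizontal edge the function $\widetilde W$ is constant, since the horizontal trajectories of $(d\eta)^{2}$ are exactly the level lines of $\Imagin\,\eta$, so the horizontal stretches contribute nothing. Along a vertical edge $R$ one has $W\equiv0$, hence $d\eta=i\,d\widetilde W$ and $|d\eta|=|d\widetilde W|$; as the interior of an edge contains no zero of $d\eta$, the map $\widetilde W$ is monotone along $R$ and its total variation equals the length $\int_{R}|d\eta|=h(R)$. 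With the counterclockwise orientation each traversed vertical edge then contributes $+h(R)$, and summing over the edges met between $e_j$ and $e_{j+1}$ yields $\widetilde W(e_{j+1})-\widetilde W(e_j)=\sum_{e_j<R<e_{j+1}}h(R)$. Combining this with the first paragraph gives the claimed formula $\int_{C_j}d\eta=2i\sum_{e_j<R<e_{j+1}}h(R)$.

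The routine-looking but genuinely delicate point is the sign bookkeeping together with the legitimacy of the deformation. For the deformation I would invoke that $\Gamma$ is a tree (Property~(T1)) and planar, so that for consecutive branch points in the boundary cyclic order the arc $c_j$ and the bank walk cobound a disc in $\CC$ avoiding the only singularity $\infty$ of $d\eta$; this is what pins down the relevant homotopy class of $c_j$ and guarantees no period is picked up. For the sign I would fix the orientation so that $M^{+}$, where $W>0$, lies on a fixed side of the bank, and use the Cauchy--Riemann relation (that $\nabla\widetilde W$ is $\nabla W$ rotated by $\pi/2$) to see that walking counterclockwise forces $\widetilde W$ to increase, so that every vertical edge enters with the \emph{same} positive sign. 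Verifying this global coherence of the signs---equivalently, that no partial cancellation occurs---is the main obstacle, and it is most transparently checked on the extended graph $\Ext\Gamma$, where each cell carries the flat coordinate $\eta$ and the weights $h(R)$ appear directly as the heights of the glued half-strips.
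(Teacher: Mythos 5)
Your proposal is correct and follows essentially the same route as the paper's proof: reduce $\int_{C_j}d\eta$ to $2\int_{c_j}d\eta$ via $J^{*}d\eta=-d\eta$, split $d\eta=dW+i\,dH$ with $H$ the harmonic conjugate of the width function, kill the real part using $W(e_j)=W(e_{j+1})=0$, and deform $c_j$ to the bank where horizontal edges contribute nothing and each vertical edge contributes its weight $h(R)$. The sign coherence you flag as the delicate point is exactly what the paper settles with the Cauchy--Riemann identity $dH|_R=\frac{\partial W}{\partial n}\,dl$, so your argument matches theirs in substance as well as in structure.
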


\begin{proof}
Let  $H(z)$ be the harmonic conjugate to the width function $W(z)$. It is a multivalued  function in  the complement of the graph $\Gamma$: going around the graph (or equivalently, the  infinity) adds $\pm2\pi$ to the initial value of $H(z)$. We have a chain of equalities:
\be
\int_{C_j}d\eta=
2\int_{c_j}d\eta= 
2\int_{c_j}d(W+iH)= 
2i\int_{c_j}dH \,.
\ee 
To obtain the last equality we used that the width function $W$ vanishes at all the branch points $e$, where the path $c_{j}$ starts and ends. Continuing the last equality:
\be
\int_{C_j}d\eta
=2i\sum_{e_j<R<e_{j+1}} \int_R dH=
2i\sum_{e_j<R<e_{j+1}} h(R) \,.
\ee
Here we used Cauchy-Riemann equations 
$$
dH|_R=\frac{\partial W}{\partial n}dl\,,
$$ 
where $n$ is normal to the edge $R$
and $l$ is a length parameter on the edge. Hence $dH$ vanishes at the horizontal edges 
and is equal to the metric of the differential $|d\eta|$ on the vertical edges.
\end{proof}

\begin{ex}
 For the graph pictured in Figure~\ref{HomBasis}, the period of $d\eta$ along $C_{1}$ is $2i (h_{1}+h_{3})$, the period along  $C_{2}$ is $2i (h_{3}+h_{4}+h_{6})$ and the period 
 along $C_1+C_3+C_5$ equals to $2i (h_1+h_3+h_6+h_7+h_5+h_4+h_2) =2\pi i$
 according to normalization (W2).
\end{ex}

\subsection{Local isoperiodic deformations}\label{Sec:LocalDeform}
We  know  from Theorem~\ref{PAmanifold} that
fixing the values of periods of the distinguished differential locally 
define a complex $(g+2)$-dimensional submanifold, such as the $\tilde{\mathscr{A}}_{g}^{n}$,  in the moduli space~$\tilde{\cal H}_g$. Two degrees of freedom on this manifold account for inessential affine motions of the branching divisor which do not change the complex structure.
The remaining~$g$ complex degrees of freedom on an isoperiodic manifold may be explained in terms of associated graphs.  For simplicity we define the isoperiodic deformations for the generic graph, general case will follow from continuity.

In the generic case the width function $W$ has exactly $g$ saddle points $V$, that is 
the double zeros of $(d\eta)^2$.  The vicinity of each of them in the graph $\Gamma$ has the appearance pictured in Figure~\ref{IsoDef}: the vertex $V$ is the meeting point of exactly two horizontal edges which go 
straight from two vertical components of the graph.
Each of two nearest neighbour nodes of~$V$ is incident to exactly two vertical edges. 
We label the weights of the four mentioned nearest 
to $V$ vertical edges cyclically by $h_1$, $h_2$, $h_3$ and $h_4$ as in Figure~\ref{IsoDef}.
The following two modifications of weights in the neighbourhood of the vertex~$V$
obviously do not change any period:
\be
W(V)\to W(V)+\delta W;
\qquad h_s \to h_s-(-1)^s\delta h, 
\qquad s=1,2,3,4,
\label{Isoperiodic}
\ee
with real increments $\delta W$, $\delta h$ small enough for the modified graph to obey admissibility conditions.

 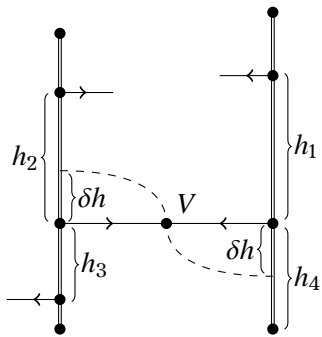
\begin{figure}[ht]
 \centering
\begin{tikzpicture}[scale=1.4,decoration={
    markings,
    mark=at position 0.5 with {\arrow[thick]{>}}}]

\coordinate (p0) at (0,-1);
\coordinate (p1) at (0,1.8);
\coordinate (p2) at (2,-1);
\coordinate (p3) at (2,2);
\coordinate (q1) at (1,0);
\coordinate (q0) at (0,0);
\coordinate (q2) at (2,0);

\draw[postaction={decorate}] (q0) -- (q1);\draw[postaction={decorate}] (q2) -- (q1);
\draw[double] (p0) --(p1)coordinate[pos=.1] (s1)coordinate[pos=.8] (s2);\draw[double] (p2) --(p3)coordinate[pos=.3] (s3)coordinate[pos=.2] (s3)coordinate[pos=.8] (s4);

\draw[postaction={decorate}] (s1) -- ++ (-.5,0);
\draw[postaction={decorate}] (s2) -- ++ (.5,0);
\draw[postaction={decorate}] (s4) -- ++ (-.5,0);

    \foreach \i in {1,2,4}
   \fill (s\i)  circle (1.5pt);

      \foreach \i in {0,1,...,3}
   \fill (p\i)  circle (1.5pt);
         \foreach \i in {0,1,2}
  \fill (q\i)  circle (1.5pt); \node[above right] at (q1) {$V$};
  
  \draw [decorate,decoration={brace}]	(-0.11,0.02) -- (-0.11,1.23) node [midway, left] {$h_{2}$};
  \draw [decorate,decoration={brace}]	(2.11,-0.04) -- (2.11,-1.0) node [midway, right] {$h_{4}$};
  \draw [decorate,decoration={brace}](0.11,-0.04)	 --(0.11,-.74)  node [midway, right] {$h_{3}$};
  \draw [decorate,decoration={brace}]	(2.11,1.42) -- (2.11,.04) node [midway, right] {$h_{1}$};

    \draw[dashed] (0,.5) .. controls ++(0:.7) and ++(90:.3) .. (q1) .. controls ++(-90:.3) and ++(180:.7) .. (2,-.5);
  \draw [decorate,decoration={brace}]	(0.08,.47) -- (0.08,0.02) node [midway, right] {$\delta h$};
  \draw [decorate,decoration={brace}]	(1.92,-.47) -- (1.92,-0.02) node [midway, left] {$\delta h$};
\end{tikzpicture}
\caption{Vicinity of a generic saddle point $V$. The horizontal segment of the graph deformed by $ h_s \to h_s-(-1)^s\delta h$ with positive $\delta h$ is pictured  in dashed.}
\label{IsoDef}
\end{figure}

We will use the deformations of this kind to bring the graph of a  Riemann surface~$M$
corresponding to Equation~\eqref{PA} admitting a  primitive solution of degree $n$ to a  standard form.

 \section{Isoperiodic deformation to graphs  of standard forms}\label{sec:upperbound}
  The original enumeration problem  essentially belongs to algebraic geometry, however the graph technology allows us to study it by efficient combinatorial methods. 
A similar approach is used  in the classification of the connected components of strata of abelian differentials \cite{KonZor}, intersection theory on moduli spaces \cite{Kon91,Kon92} and some other investigations.

In this section we first introduce two standard forms of the graphs $\Gamma(M)$ and secondly
present a combinatorial procedure for the isoperiodic deformation of a graph associated to a Pell-Abel equation with a primitive solution of degree $n$   to the standard form graph.

These standard forms may be chosen differently. For the upper bound of the number of 
connected components $a(g,n)$ we use the \emph{Two Bush} standard form. For the lower bound in Section~\ref{sec:braid} we will use the \emph{Linear} standard from. For sake of completeness we give an explicit isoperiodic transformation between both standard forms.

\subsection{Two standard forms of graphs}\label{sec:2stdforms}
Let $\Gamma$ be a  graph associated to a Pell-Abel equation with a primitive solution of degree~$n$. For convenience we rescale the weights of its vertical edges as follows:
\be \label{rescale}
\hbar(R):=nh(R)/\pi \,.
\ee
This rescaling allows us to work with integers instead of rational multiples of $\pi$. 
To distinguish between the normalizations, we continue to call the value $h(R)$ 
the weight of the (vertical) edge $R$, whereas we refer to $\hbar(R)$ as of its \emph{height}.
Note that the total height of the vertical component of a graph is equal to~$n$.

\begin{defn} 
The \emph{linear graph} $\Gamma(s,g,n)$ with integer parameters $g\geq1$, $n\geq g+1$ and $s=0,1,\dots,m^*:=$ $\min(g-1,n-g-1)$  is defined as follows. It has $g+1$ vertical segments connected at their endpoints by $g$ horizontal components so that the whole graph is embedded in a line, as represented in Figure~\ref{LinGraph}. The first $(g-s)$ vertical edges are of height $\hbar =1$, they are  followed by $s$ vertical edges of height $\hbar=2$ and finally the height of the last one is $\hbar=n-g-s$. The value of the width function at its $g$ saddle points is not specified as inessential.
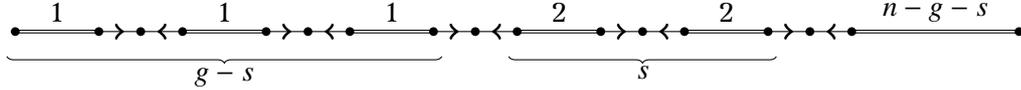
\begin{figure}[ht]
 \centering
\begin{tikzpicture}[scale=1.1,decoration={
    markings,
    mark=at position 0.6 with {\arrow[very thick]{>}}}]

    \foreach \i in {0,1,...,12}
     \coordinate (p\i) at (\i,0);
  \foreach \i in {0,1,...,12}
     \coordinate (q\i) at (\i+1/2,0);

\draw[postaction={decorate}] (p1) -- (q1);
\draw[postaction={decorate}] (p2) -- (q1);
\draw[postaction={decorate}] (p3) -- (q3);
\draw[postaction={decorate}] (p4) -- (q3);
\draw[postaction={decorate}] (p5) -- (q5);
\draw[postaction={decorate}] (p6) -- (q5);
\draw[postaction={decorate}] (p7) -- (q7);
\draw[postaction={decorate}] (p8) -- (q7);
\draw[postaction={decorate}] (p9) -- (q9);
\draw[postaction={decorate}] (p10) -- (q9);
\draw[double] (p0) --(p1)coordinate[pos=.5] (r1);
\draw[double] (p2) --(p3)coordinate[pos=.5] (r2);
\draw[double] (p4) --(p5)coordinate[pos=.5] (r3);
\draw[double] (p6) --(p7)coordinate[pos=.5] (r4);
\draw[double] (p8) --(p9)coordinate[pos=.5] (r5);
\draw[double] (p10) --(p12)coordinate[pos=.5] (r6);

\foreach \i in {0,1,2,3,4,5,6,7,8,9,10,12}
   \fill (p\i)  circle (1.5pt);
 \foreach \i in {1,3,5,7,9}
    \fill (q\i)  circle (1.5pt);
    \foreach \i in {1,2,3}
   \node[above] at (r\i) {$1$};
    \foreach \i in {4,5}
   \node[above] at (r\i) {$2$};
   \node[above] at (r6) {$n-g-s$};
   
   \draw [decorate,decoration={brace}]	(5.1,-.3) -- (-.1,-.3) node [midway, below] {$g-s$};
      \draw [decorate,decoration={brace}]	(9.1,-.3) -- (5.9,-.3) node [midway, below] {$s$};
\end{tikzpicture}
\caption{The linear graph $\Gamma(s,g,n)$ for $g=5$ and $s=2$.}
\label{LinGraph}
\end{figure}
\end{defn}

\begin{rmk}\label{rem:mstar}
The number $s$ of vertical edges of height $\hbar=2$ in the standard linear form cannot be too large when the degree $n$ is smaller than $2g$. Indeed, otherwise the last vertical edge will have zero or negative height. This is the reason  why $s$ is less or equal to $m^*:=\min(g-1,n-g-1)$. 
\end{rmk}

\begin{rmk} The linear graphs correspond to Riemann surfaces $M$ with only real branchpoints.
The solutions $P(x)$ of corresponding Pell-Abel equations are known as 
multiband Chebyshev polynomials, see \cite{B99, B03}. In this case, the heights $\hbar$ of the vertical segments correspond to the oscillation numbers of  the Chebyshev polynomial $P(x)$ on the bands. In general they can take arbitrary positive integer values
which sum up to $\deg P=n$.
\end{rmk}

Given the same set of parameters $(s,g,n)$ as for the  standard linear form
$\Gamma(s,g,n)$, we introduce the \emph{Two bush} standard form $\Gamma^*(s,g,n)$
built as follows:
\begin{defn}
The \emph{small bush} is a collection of $2(g-s)+2$ vertical edges, that we call \emph{twigs}, of equal height $\hbar =1/2$ all growing from the same root.  The \emph{large bush} is a similar starlike graph of $2s$ vertical edges of height $\hbar=1$.
The \emph{two bush graph} $\Gamma^*(s,g,n)$ is obtained by gluing  the root of the large bush and a vertical edge of height $\hbar=n-g-s-1$, called \emph{tail},  to a hanging  vertex of the small bush, in such a way that  the 
whole embedded graph admits reflection symmetry. Such graph is pictured in Figure~\ref{fig:2bush}.
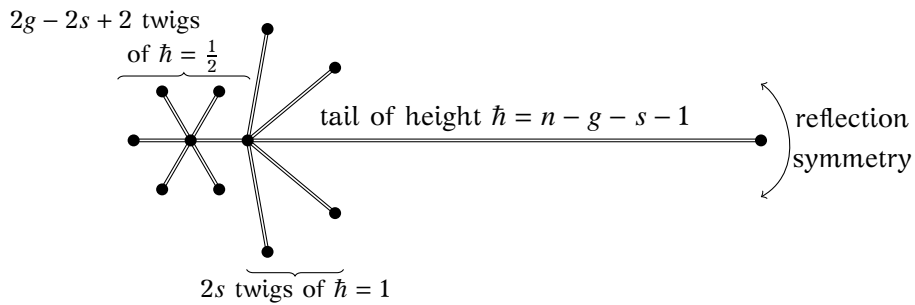
\begin{figure}[ht]
 \centering
\begin{tikzpicture}[scale=1.5,decoration={
    markings,
    mark=at position 0.5 with {\arrow[thick]{>}}}]
 \coordinate (p0) at (0,0);
 \foreach \i in {1,2}
\coordinate (p\i) at (40*\i:1);
\coordinate (p3) at (-80:1);
\coordinate (p4) at (-40:1);
\coordinate (q0) at (4.5,0);
          \foreach \i in {1,2,...,4}
   \draw[double] (p0) -- (p\i);
   \coordinate (p6) at (-1/2,0);
      \draw[double] (p0) -- (p6);
             \foreach \i in {1,2,3,4,5}
   \draw[double] (p6) -- ++(60*\i:.5) coordinate (q\i);
    \draw[double] (p0) --node[above] {tail of height $\hbar=n-g-s-1$} (q0);
    \foreach \i in {0,1,2,3,4,5}
   \fill (q\i)  circle (1.5pt);
       \foreach \i in {0,1,2,3,4,6}
   \fill (p\i)  circle (1.5pt);

  \draw [decorate,decoration={brace}]	(.84,-1.14) -- (-.01,-1.14) node [midway, below] {\small{$2s$ twigs of $\hbar= 1$}};
   \draw [decorate,decoration={brace}]	 (-1.14,.54)-- (.02,.54);
 \node at (-1.25,1.05) {\small{$2g-2s+2$ twigs}};
    \node at (-.65,.75) {\small{of $\hbar= \frac{1}{2}$}};

   \draw[<->] (4.5,.5)  .. controls ++(-30:.25) and ++ (30:.5) .. (4.5,-.5) ;
 \node at (5.3,.2) {reflection};
  \node at (5.3,-.2) {symmetry};
   \end{tikzpicture}
 \caption{The two Bush graph $\Gamma^*(s,g,n)$ for $g=4$ and $s=2$.} \label{fig:2bush}
\end{figure}
\end{defn}

Note that the tail disappears when $s=n-g-1$. In this case the root of the larger bush becomes a branch point.

 We will prove in Section~\ref{sec:prooflema}  that these two standard forms are related to each other in the following way:
\begin{lmm} 
\label{lmm:Line2Bush}
The two bush graph $\Gamma^*(s,g,n)$ and the linear graph $\Gamma(s,g,n)$  are joined by an isoperiodic deformation. 

The two bush graphs $\Gamma^*(s,g,n)$ and $\Gamma^*(s-1,g,n)$ with $s>0$  and  $s+g+n$ odd are joined by an isomorphic deformation. 
\end{lmm}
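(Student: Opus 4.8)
The plan is to prove both assertions by period-preserving surgery on the associated graphs, using the elementary isoperiodic moves of~\eqref{Isoperiodic}, the admissibility criterion of Theorem~\ref{thm:five}, and the period formula~\eqref{period}.

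For the first assertion I would produce an explicit isoperiodic path from the two-bush graph $\Gamma^*(s,g,n)$ to the linear graph $\Gamma(s,g,n)$ by combing the bushes onto a line. Since the moves~\eqref{Isoperiodic} are stated for simple saddles, I first replace $\Gamma^*(s,g,n)$ by a nearby generic graph, splitting its two high-order saddles (the small-bush root, of order $2(g-s)$, and the glue vertex, of order $2s$) into $g$ simple saddles; this is legitimate by the continuity principle for isoperiodic deformations noted in Section~\ref{Sec:LocalDeform}. I then apply the moves~\eqref{Isoperiodic} to straighten the twigs onto the reflection axis. The heights then combine exactly as the two distributions demand: the $g-s$ off-axis half-twig pairs of the small bush assemble into $g-s$ vertical segments of height $1$, the $s$ off-axis pairs of the large bush into $s$ segments of height $2$, and the two on-axis half-twigs together with the tail of height $n-g-s-1$ into the terminal segment of height $\tfrac12+\tfrac12+(n-g-s-1)=n-g-s$. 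Because every step has the form~\eqref{Isoperiodic}, all periods are preserved automatically, so the remaining work is purely combinatorial: tracking the height bookkeeping and checking that conditions (T1)--(W2) of Theorem~\ref{thm:five} survive at each combinatorial transition and that the straightened segments end up in the prescribed order.

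For the second assertion I would keep the reflection symmetry throughout and deform $\Gamma^*(s,g,n)$ into $\Gamma^*(s-1,g,n)$ by transporting one reflection-symmetric pair of large-bush twigs (each of height $1$) toward the small bush, where it is reabsorbed as two half-twigs, while the tail simultaneously grows by $1$; the total height stays $n$ and the resulting data match $\Gamma^*(s-1,g,n)$. Performing the moves~\eqref{Isoperiodic} simultaneously on the two mirror halves, the path passes through a single reflection-symmetric hinge graph $\Gamma^\sharp$ in which the migrating pair lies on the axis. The heart of the matter is that $\Gamma^\sharp$ satisfies the admissibility conditions of Theorem~\ref{thm:five} --- in particular I expect its central on-axis vertical weight to be positive and correctly quantized --- precisely when $s+g+n$ is odd; for the opposite parity the hinge would force a vanishing or negative vertical weight and the surgery could not be completed.

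I expect the main obstacle to be exactly this parity analysis of the hinge graph $\Gamma^\sharp$: showing that admissibility of the symmetric intermediate is equivalent to $s+g+n$ being odd. This should reduce to a parity count of the vertical heights accumulated along the reflection axis at the moment of the combinatorial transition, together with verifying (T2) and (W1)--(W2) for $\Gamma^\sharp$. As a guide and an independent check I would compute the degree partition of the linear forms $\Gamma(s,g,n)$ and $\Gamma(s-1,g,n)$ --- which coincide with those of the two-bush forms by the first assertion, the invariant being locally constant along isoperiodic deformations --- and verify directly that passing from $s$ to $s-1$ flips the signs $P(e)=\pm1$ at the $2s$ branch points bounding the affected bands, the resulting unordered partition being preserved if and only if $s+g+n$ is odd. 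This matches the range of validity asserted in the lemma and should dovetail with the upper bound on $a(g,n)$ derived in this section.
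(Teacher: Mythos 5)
Your first assertion is handled essentially as in the paper: the paper likewise detaches the $(g-s)$ pairs of consecutive $\hbar=\tfrac12$ twigs and the $s$ pairs of consecutive $\hbar=1$ twigs (each detachment is the insertion of a cord at the root, which is your ``splitting of the high-order saddles''), leaving a core segment of height $\tfrac12+\tfrac12+(n-g-s-1)=n-g-s$, and then rolls the pendent height-$1$ and height-$2$ segments into a line; your height bookkeeping matches Figure~\ref{2Bush} exactly. So part one is fine, if less explicit about which macro-moves realize the ``combing''.

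For the second assertion there is a genuine gap, and I believe your symmetric-migration plan cannot be repaired. First, it is not implementable by legal surgeries: a mirror pair of large-bush twigs is never \emph{adjacent} in the cyclic order at the large-bush root (the tail separates the two sides, and the connecting edge to the small bush separates them on the other side), and detaching edges at a vertex of even order, such as either bush root, must split the incident edges into two groups of \emph{even} cardinality --- otherwise both new vertices acquire odd order, i.e.\ by~\eqref{ordV} two new branch points appear and one leaves $\tilde{\mathcal H}_g$. Hence one can detach a pair of \emph{neighbouring} twigs, but not a reflection-symmetric pair, and not a single twig. Second, the proposed endpoint matching violates period preservation: tracking the two tips $Q,Q'$ of the migrating mirror pair, the cycle $(\Id-J)c$ with $c$ an arc joining them has reduced period congruent to $\pm$ their boundary $\hbar$-distance modulo $2n$; initially this distance is $2(n-g-s-1)+4i-2$ for the $i$-th mirror pair, while as a mirror pair of half-twigs on the small bush it would be $2(j+1)$ with $0\le j\le g-s$, and for $n\ge 2g+2$ no choice of $i,j$ makes these agree mod $2n$. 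So no isoperiodic path realizes ``the pair is reabsorbed as two half-twigs while the tail grows by $1$'' with the branch points matched symmetrically.

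The paper's mechanism is intrinsically \emph{asymmetric}, and the parity enters quite differently from your ``vanishing or negative central weight'' guess. After rearranging the bushes, one detaches a pair of \emph{neighbouring} unit twigs as a pendent $\hbar=2$ segment, rolls it along the tail, and attaches it to the tail's endpoint; since that endpoint is a branch point, the attachment is legal only if the contact point on the segment is its midpoint (contracting a cord between two branch points produces a nodal curve and is prohibited). Rolling advances the contact point by the $\hbar$-distance traveled, which has the parity of the tail height $n-g-s-1$; thus the attachment is possible exactly when the tail height is even, i.e.\ when $s+g+n$ is odd. One half of the segment then extends the tail, and the remaining unit edge is detached, rolled back, and attached by its midpoint to the small-bush root, producing the two new half-twigs of $\Gamma^*(s-1,g,n)$ (Figure~\ref{2Bush2}). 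Note also that the lemma only asserts existence of the deformation under the parity hypothesis; the converse direction you try to extract from the hinge graph is not needed here --- it is exactly what the invariants of Section~\ref{sec:GlobInv} are for. Your closing consistency check (the degree partitions of $\Gamma(s,g,n)$ and $\Gamma(s-1,g,n)$ agree iff $s+g+n$ is odd) is correct, but it is only a necessary condition and cannot substitute for the construction.
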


The main result of this section is the following.
\begin{thrm}\label{thm:isoperdef}
 Any graph $\Gamma$ corresponding to a Pell-Abel equation $P^2(x)-D(x)Q^2(x)=1$ with $\deg D=2g+2>2$ and
 admitting a  primitive solution of degree $n>g$ can be isoperiodically transformed into a two bush graph $\Gamma^*(s,g,n)$ for some  $s=0,1,\dots,m^*$, where $m^*:=\min(g-1, n-g-1)$.
\end{thrm}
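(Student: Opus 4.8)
The plan is to give an explicit reduction algorithm that, starting from an arbitrary admissible graph $\Gamma=\Gamma(M)$ with the prescribed discrete data ($2g+2$ branch points, total height $n$), applies a finite sequence of the elementary isoperiodic moves of Section~\ref{Sec:LocalDeform} together with their limiting ``wall-crossing'' versions, and terminates at a two bush graph $\Gamma^*(s,g,n)$. Because the periods are preserved throughout, the genus $g$ and the degree $n$ are automatically preserved, and the value of $s$ reached at the end will be confined to $\{0,1,\dots,m^*\}$ by positivity of the heights. The hypotheses $g\ge 1$ and $n\ge g+1$ guarantee that the generic picture below makes sense and that the target forms $\Gamma^*(s,g,n)$ are defined.

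First I would reduce to the generic case. A small isoperiodic perturbation makes all $g$ zeros of $(d\eta)^2$ simple double zeros (saddle points with $W>0$) and all $2g+2$ branch points pendant vertices. Since generic graphs are dense in the isoperiodic manifold and isoperiodic deformation preserves connected components, it suffices to treat generic graphs; a non-generic graph lies in the same component as nearby generic ones and so deforms to the same two bush form. In the generic situation both moves of Equation~\eqref{Isoperiodic} are available at every saddle: the height redistribution $h_s\to h_s-(-1)^s\delta h$ and the width shift $W(V)\to W(V)+\delta W$. Pushed to its boundary, the first contracts an adjacent vertical edge (height $\to 0$), reconnecting the horizontal edge to the next vertical wall; pushed to $W(V)\to 0$, the second shrinks the two horizontal edges incident to $V$ (their $|d\eta|$-length is the width drop $W(V)$) to zero length, attaching $V$ to the zero level set $\Gamma_\vert$ and raising the vertical degree there, while bringing two saddles of equal width together along a shared horizontal ridge merges them into a single higher-order zero.

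The engine of the reduction is an inductive step with respect to the number of horizontal edges, equivalently the number of interior saddle points not yet concentrated at a bush root. Choosing an interior saddle $V$, I would first apply height-redistribution moves to arrange that lowering $W(V)\to 0$, or colliding $V$ with an already-concentrated saddle, is admissible, i.e. keeps conditions (T1)--(T3),(W1),(W2) of Theorem~\ref{thm:five}; then I cross that wall. Each step strictly decreases the complexity while preserving all periods, so after finitely many steps the graph becomes purely vertical with its $g$ saddle points concentrated at two roots, of orders $2(g-s)$ and $2s$. Because every partial sum of heights along a bank of $\Gamma$ is an integer multiple of $\pi/n$ (Lemma~\ref{lm:period} together with the lattice condition) and the total height is $n$, the heights are forced into the pattern of $\Gamma^*(s,g,n)$ after straightening the planar tree into the reflection-symmetric shape of Figure~\ref{fig:2bush}, with $s$ the halved order of the larger concentrated zero. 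Alternatively, one may first straighten $\Gamma$ to the linear form $\Gamma(s,g,n)$ and then invoke Lemma~\ref{lmm:Line2Bush} to pass to $\Gamma^*(s,g,n)$.

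The main obstacle is controlling the global tree topology across the wall-crossings: I must show that at every stage there exists an admissible move strictly reducing the complexity without violating the five axioms, so the algorithm never gets stuck and terminates at a two bush rather than some other purely vertical tree. This requires a careful case analysis of the local pictures around a saddle being lowered or merged, using planarity and (T2) to guarantee the required alternation of incident edges, together with the bookkeeping that the final heights are nonnegative and summing to $n$. The latter is exactly what confines $s$ to the range $0\le s\le m^*=\min(g-1,n-g-1)$, since the tail height $n-g-s-1$ and the bush sizes must stay nonnegative (cf. Remark~\ref{rem:mstar}).
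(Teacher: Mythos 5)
Your proposal reproduces the easy half of the paper's argument (reduction to a generic graph and collapse of the horizontal part onto the zero level set, which is the paper's Stage~1), but the two claims that carry all the weight are unjustified, and one of them is false as stated. First, nothing in your algorithm uses primitivity of the degree-$n$ solution, yet the theorem is false without it: the greatest common divisor of the rescaled heights $\hbar$ (equivalently, of the periods measured in units of $2\pi i/n$) is preserved by \emph{every} isoperiodic deformation, and for a two bush graph this gcd equals $1$ (adjacent small-bush twigs give a period equal to $1$), so a graph whose degree-$n$ solution is non-primitive, i.e.\ all of whose rescaled periods are divisible by some $l>1$, can never reach any $\Gamma^*(s,g,n)$. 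A correct proof must therefore invoke primitivity at a definite point; in the paper it enters through the \emph{catalyst} argument: a pendent vertical segment is rolled around the core and its height pumped down until it stabilizes at an integer $l$, and primitivity is exactly what forces $l=1$.

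Second, your assertion that repeated saddle-lowering and saddle-merging terminates in a vertical tree ``with its $g$ saddle points concentrated at two roots,'' and that integrality of partial height sums then ``forces'' the two bush pattern, does not hold. Collapsing the horizontal part produces an \emph{arbitrary} purely vertical tree: a single star with $2g+2$ twigs, a caterpillar-shaped tree, and many other configurations all satisfy the integrality constraints, condition (W2) and the admissibility axioms of Theorem~\ref{thm:five}. The two bush graph is one very particular such tree, and passing from an arbitrary vertical tree to it is the entire content of the theorem. That passage requires the non-local moves of Section~\ref{sec:prepadef} (rolling, attaching/detaching, pumping), which redistribute height between distant parts of the graph and which your move set never exploits, together with the paper's induction on the genus: one detaches the catalyst, and the remaining core graph may itself correspond to a \emph{non-primitive} solution of degree $n-1$, a case needing separate treatment (rescaling by $l=(n-1)/n'$, manufacturing a height-$2$ pendent segment, and a second application of the induction hypothesis, followed by the case analysis (i)/(ii) for reattachment). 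The ``careful case analysis'' you defer in your last paragraph is thus not a technical verification of an otherwise complete argument; it is the proof itself, and the invariant that makes it work (primitivity via the catalyst) is absent from your outline.
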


 \begin{cor}
  The number of connected components $a(g,n)$ of $\mathscr{A}_{g}^{n}$ for $n>g$ and $g>0$
  is at most $[m/2]+1$ if $n+g$ is odd and at most  $[(m+1)/2]$ if $n+g$ is even, where $m=\min(g,n-g-1)$.
 \end{cor}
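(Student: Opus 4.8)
The plan is to deduce the corollary from Theorem~\ref{thm:isoperdef} together with the second part of Lemma~\ref{lmm:Line2Bush}, by a pure counting argument. Theorem~\ref{thm:isoperdef} tells us that every graph $\Gamma$ associated to a solvable Pell-Abel equation with $\deg D = 2g+2$ and primitive degree $n>g$ can be isoperiodically deformed to one of the two bush graphs $\Gamma^*(s,g,n)$ with $s\in\{0,1,\dots,m^*\}$, where $m^*=\min(g-1,n-g-1)$. Since an isoperiodic deformation moves $D$ within a single connected component of $\mathscr{A}_g^n$ (it keeps all periods of $d\eta$ inside the lattice $2\pi i\ZZ/n$ and hence stays inside the isoperiodic manifold by Theorem~\ref{PAmanifold}), every connected component contains at least one of the standard graphs $\Gamma^*(s,g,n)$. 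Therefore $a(g,n)$ is at most the number of \emph{distinct} components among the $m^*+1$ standard forms $\Gamma^*(0,g,n),\dots,\Gamma^*(m^*,g,n)$.

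Next I would use the second part of Lemma~\ref{lmm:Line2Bush}: when $s>0$ and $s+g+n$ is odd, $\Gamma^*(s,g,n)$ and $\Gamma^*(s-1,g,n)$ are joined by an isomorphic (hence certainly isoperiodic) deformation, so they lie in the same component. The parity of $s+g+n$ alternates as $s$ runs over consecutive integers, so this identifies the standard forms in pairs $(\Gamma^*(s-1),\Gamma^*(s))$ according to the parity of $g+n$. First I would fix the parity of $n+g$ and determine, among $s=0,1,\dots,m^*$, which consecutive pairs get glued: the relation applies exactly to those $s>0$ for which $s+g+n$ is odd, i.e.\ to $s$ of a fixed parity opposite to that of $g+n$. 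Counting the equivalence classes of $\{0,1,\dots,m^*\}$ under the relation $s\sim s-1$ for those admissible $s$ is an elementary exercise: one simply groups the $m^*+1$ integers into blocks of size two wherever the gluing applies, leaving isolated singletons at the ends depending on parity.

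The bookkeeping step I would carry out carefully is reconciling the two parameters $m^*=\min(g-1,n-g-1)$ appearing in the two bush construction and $m=\min(g,n-g-1)$ appearing in the statement. These differ precisely in the case $n-g-1\ge g$, where $m^*=g-1$ while $m=g$; in the complementary case $n-g-1<g$ both equal $n-g-1$. I would treat the two ranges separately and verify in each that, after the pairwise gluing, the number of resulting classes is $[m/2]+1$ when $n+g$ is odd and $[(m+1)/2]$ when $n+g$ is even. The main obstacle, and the only genuinely delicate point, is getting these floor-function counts to come out exactly right at the boundary values of $s$ (namely $s=0$ and $s=m^*$), since whether the extreme standard forms are glued to a neighbour or remain isolated depends on the parity of $g+n$ and on whether $m^*=g-1$ or $m^*=n-g-1$. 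Everything else is a direct quotation of Theorem~\ref{thm:isoperdef} and Lemma~\ref{lmm:Line2Bush}; I expect the proof to be short, with the parity/floor arithmetic being its entire content, and I would phrase the final count as a case distinction on the parity of $n+g$ and on which term realizes the minimum in $m$.
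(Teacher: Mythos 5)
Your proposal is correct and follows essentially the same route as the paper's own proof: the paper likewise deduces the bound by combining Theorem~\ref{thm:isoperdef} (every graph deforms to some $\Gamma^*(s,g,n)$, $s=0,\dots,m^*$) with the second part of Lemma~\ref{lmm:Line2Bush} (gluing $\Gamma^*(s,g,n)$ to $\Gamma^*(s-1,g,n)$ when $s>0$ and $s+g+n$ is odd), and then simply "counts parameters." If anything, you are more careful than the paper, which leaves implicit both the reconciliation of $m^*=\min(g-1,n-g-1)$ with $m=\min(g,n-g-1)$ and the parity bookkeeping at the endpoints $s=0$, $s=m^*$; your outlined case analysis does produce exactly $[m/2]+1$ (for $n+g$ odd) and $[(m+1)/2]$ (for $n+g$ even) in both ranges $n\le 2g$ and $n\ge 2g+1$.
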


 \begin{proof}
 According to Lemma~\ref{lmm:Line2Bush}, the two bush graphs  $\Gamma^*(s,g,n)$ and $\Gamma^*(s-1,g,n)$ can be joint by an isoperiodic deformation if $s>0$  and  $s+g+n$ odd.
Now it suffices to count parameters to see that  the number of inequivalent two bush graphs is at most $[(m+1)/2]$ when $n+g$ is even and $[m/2]+1$ when $n+g$ is odd.
 \end{proof}

In order to prove Lemma~\ref{lmm:Line2Bush} and Theorem~\ref{thm:isoperdef} we give some preparatory material on the isoperiodic deformations.

\smallskip
\par
\subsection{Useful isoperiodic deformations}
\label{sec:prepadef}
In this preparatory section, we describe useful isoperiodic deformations of a graph associated to hyperelliptic Riemann surfaces $M_{\infty}$ with a pair of marked points  $\infty_\pm$ in involution.
\smallskip
\par
\paragraph{Rolling:}

Suppose that the graph $\Gamma$ has exactly two disjoint vertical components $\Gamma^1_\vert$ and~$\Gamma^2_\vert$. The latter are connected by the only horizontal component containing exactly two edges
meeting at the saddle point of the width function as left of Figure~\ref{fig:rotate}. We call such simple horizontal component a \emph{cord}.

The following deformation of $\Gamma$, called \emph{rolling} and pictured in Figure~\ref{fig:rotate}, is isoperiodic. The cord is fixed while both vertical components rotate as rigid bodies  in the same direction so that the meeting points of the cord with both vertical components 
move along the boundaries of $\Gamma^1_\vert$ and $\Gamma^2_\vert$ with equal speed. 
Alternatively, we keep one of the vertical components static, say $\Gamma^2_\vert$ which we now call the \emph{core} component. The cord goes around the core and drags
the other vertical component $\Gamma^1_\vert$ which at a time rotates with respect to the cord in the opposite direction so that the equality of velocities of the contact points again holds. 
Essentially this deformation is the same as that in Section~\ref{Sec:LocalDeform} however the parameter $\delta h$ of the deformation is no longer small.

\begin{figure}[ht]
 \centering
\begin{tikzpicture}[scale=1.2,decoration={
    markings,
    mark=at position 0.5 with {\arrow[thick]{>}}}]

\coordinate (p0) at (1,-1);
\coordinate (p1) at (1,1);
\coordinate (p2) at (2,-1);
\coordinate (p3) at (2,2);
\coordinate (q1) at (1.5,0);
\coordinate (q0) at (1,0);
\coordinate (q2) at (2,0);

\draw[postaction={decorate}] (q0) -- (q1);\draw[postaction={decorate}] (q2) -- (q1);
  \draw[dotted] (1,-.2) .. controls ++(0:.2) and ++(-170:.2) ..   (q1) .. controls ++(10:.2) and ++ (180:.2) .. (2,.2) ;
    \draw[dotted] (1,-.4) .. controls ++(0:.2) and ++(-160:.2) ..   (q1) .. controls ++(20:.2) and ++ (180:.2) .. (2,.4) ;
\draw[double] (p0) --(p1)coordinate[pos=.25] (s1)coordinate[pos=.7] (s2);\draw[double] (p2) --(p3)coordinate[pos=.3] (s3)coordinate[pos=.2] (s3)coordinate[pos=.8] (s4);

      \foreach \i in {0,1,...,3}
   \fill (p\i)  circle (1.5pt);
         \foreach \i in {0,1,2}
  \fill (q\i)  circle (1.5pt);

   \coordinate (r0) at (2,1.5);
\coordinate (r1) at (2.6,1.5);
\coordinate (r2) at (3.2,1.5);
\coordinate (r3) at (3.2,2.3);
\coordinate (r4) at (3.2,.1);

\draw[double] (r0) -- (r2);
\draw[double] (r2) --(r3);\draw[double] (r2) --(r4);
      \foreach \i in {0,2,3,4}
   \fill (r\i)  circle (1.5pt);

 \draw [decorate,decoration={brace}]	(1.1,-1.14) -- (.9,-1.14) node [midway, below] {$\Gamma^1_\vert$};
    \draw [decorate,decoration={brace}]	(3.44,-1.14) -- (1.74,-1.14) node [midway, below] {$\Gamma^2_\vert$};

  \draw[->] (3.7,.4) --  node[above]{rolling} (5.2,.4);

  \begin{scope}[xshift=4cm]

\coordinate (p1) at (1.2,2.9);
\coordinate (p2) at (2,-1);
\coordinate (p3) at (2,2);
\coordinate (q1) at (3.2,2.6);
\coordinate (q0) at (3.2,2.9);
   \coordinate (r0) at (2,1.5);
\coordinate (r1) at (2.6,1.5);
\coordinate (r2) at (3.2,1.5);
\coordinate (r3) at (3.2,2.3);
\coordinate (r4) at (3.2,.1);

\draw[postaction={decorate}] (q0) -- (q1);\draw[postaction={decorate}] (r3) -- (q1);
  \draw[dotted] (2.9,2.9) .. controls ++(-90:.2) and ++(150:.2) ..   (q1) .. controls ++(-30:.4) and ++ (0:.3) .. (3.2,2) ;
     \draw[dotted] (2.7,2.9) .. controls ++(60:.4) and ++(30:.8) ..   (q1) .. controls ++(-150:.4) and ++ (180:.4) .. (3.2,1.8) ;
\draw[double] (q0) --(p1)coordinate[pos=.5] (s1)coordinate[pos=.7] (s2);\draw[double] (p2) --(p3)coordinate[pos=.3] (s3)coordinate[pos=.2] (s3)coordinate[pos=.8] (s4);

      \foreach \i in {0,1,...,3}
   \fill (p\i)  circle (1.5pt);
         \foreach \i in {0,1,2}
  \fill (q\i)  circle (1.5pt);

\draw[double] (r0) -- (r2);
\draw[double] (r2) --(r3);\draw[double] (r2) --(r4);
      \foreach \i in {0,2,3,4}
   \fill (r\i)  circle (1.5pt);

 \draw [decorate,decoration={brace}]	(3.54,2.99) -- (3.54,2.79) node [midway, right] {$\Gamma^1_\vert$};
    \draw [decorate,decoration={brace}]	(3.54,2.34) -- (3.54,-1.14) node [midway, right] {$\Gamma^2_\vert$};
    
      \draw[->] (4.4,.4) --  node[above]{rolling} (5.9,.4);
  \end{scope}
  
    \begin{scope}[xshift=8.5cm]
\coordinate (p2) at (2,-1);
\coordinate (p3) at (2,2);
   \coordinate (r0) at (2,1.5);
\coordinate (r1) at (2.6,1.5);
\coordinate (r2) at (3.2,1.5);
\coordinate (r3) at (3.2,2.3);
\coordinate (r4) at (3.2,.1);

\coordinate (q1) at (3.5,1);
\coordinate (p1) at (3.8,1.7);
\coordinate (q0) at (3.8,-.3);
\coordinate (r5) at (3.2,1);
\coordinate (r6) at (3.8,1);

\draw[postaction={decorate}] (r5) -- (q1);\draw[postaction={decorate}] (r6) -- (q1);
\draw[double] (q0) --(p1)coordinate[pos=.5] (s1)coordinate[pos=.7] (s2);\draw[double] (p2) --(p3)coordinate[pos=.3] (s3)coordinate[pos=.2] (s3)coordinate[pos=.8] (s4);

      \foreach \i in {0,1,...,3}
   \fill (p\i)  circle (1.5pt);
         \foreach \i in {0,1,2}
  \fill (q\i)  circle (1.5pt);

\draw[double] (r0) -- (r2);
\draw[double] (r2) --(r3);\draw[double] (r2) --(r4);
      \foreach \i in {0,2,3,4,5,6}
   \fill (r\i)  circle (1.5pt);

 \draw [decorate,decoration={brace}]	(4,-1.14) -- (3.6,-1.14) node [midway, below] {$\Gamma^1_\vert$};
    \draw [decorate,decoration={brace}]	(3.44,-1.14) -- (1.74,-1.14) node [midway, below] {$\Gamma^2_\vert$};
  \end{scope}
\end{tikzpicture}
\caption{Rolling the vertical component $\Gamma^1_\vert$  around the core component $\Gamma^1_\vert$. The dotted lines show the intermediate positions of the chord.}
\label{fig:rotate}
\end{figure}
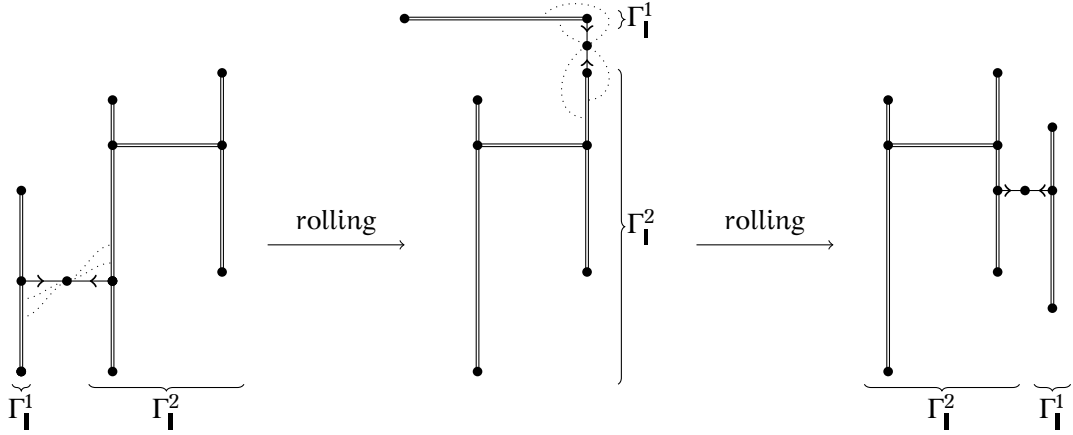

\begin{rmk}
The rolling of a pendent vertical component of the graph $\Gamma$ around the rest of the graph may be defined in a more general case.
For simplicity in this paper we do not use any deformations which lead to the collision of 
different horizontal components of the graph.
The collision of this type leads to a more deep change in combinatorial structure of the graph $\Gamma$, see e.g.  Chapter~4 of \cite{Bbook} and \cite{B23} for the analytical aspects of such collision.
\end{rmk}

\smallskip
\par
\paragraph{Attaching and Detaching:}
Given a graph $\Gamma$ as in the rolling procedure, the cord may be contracted when it reaches some point $V$ at the boundary of the core graph $\Gamma_\vert^2$ during rolling. This procedure is called \emph{attaching} of $\Gamma_\vert^1$ at the point $V$ on the core vertical graph. Note that if the cord connects two branch points, like in the middle of Figure~\ref{fig:rotate}, the procedure leads to a  nodal curve $M$ and it is prohibited. 
The inverse procedure of inserting a cord at a vertex $V$ of a vertical subgraph will be referred as a \emph{detaching}.

\smallskip
\par
\paragraph{Pumping:}
Given a graph $\Gamma$ with a pendent vertical segment $[V_1,V_2]=\Gamma_\vert^1$ and a core graph $\Gamma_\vert^2$. We can roll $\Gamma_\vert^1$ until  the cord passes through a branch point of $\Gamma_\vert^2$ as pictured left of Figure~\ref{Pump}. We can transfer a positive weight from $\Gamma_\vert^1$ to the core component by the following \emph{pumping} construction. We first contract the cord as in the middle of Figure~\ref{Pump} and 
then again insert it in another way as right of Figure~\ref{Pump}. 

Note that pumping mass is impossible if the cord simultaneously passes through two branch points: one on the pendent vertical segment and the other on the core graph, see in the middle of Figure~\ref {fig:rotate}. As observed before, in this case contracting the cord brings us to a nodal curve.

 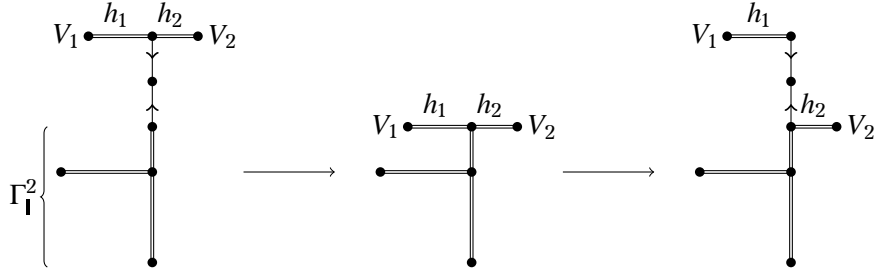
\begin{figure}[ht]
 \centering
\begin{tikzpicture}[scale=1.2,decoration={
    markings,
    mark=at position 0.5 with {\arrow[thick]{>}}}]

    \begin{scope}[xshift=0cm]
\coordinate (p0) at (1,1);
\coordinate (p1) at (0,1);
\coordinate (p2) at (1,0);
\coordinate (p3) at (1,1.5);
\coordinate (p4) at (.3,2.5);
\coordinate (p5) at (1.5,2.5);
\coordinate (q2) at (1,2);
\coordinate (q3) at (1,2.5);

\draw[double] (p0) --(p1)coordinate[pos=.5] (r1);
\draw[double] (p0) --(p2)coordinate[pos=.5] (r2);
\draw[double] (p0) --(p3)coordinate[pos=.5] (r3);
\draw[double] (p4) --(p5)coordinate[pos=.25] (r4)coordinate[pos=.75] (r5);
\draw[postaction={decorate}] (p3) -- (q2) coordinate[pos=.5] (s1);
\draw[postaction={decorate}] (q3) -- (q2) coordinate[pos=.5] (s2);

      \foreach \i in {0,1,...,5}
   \fill (p\i)  circle (1.5pt);
     \fill (q2)  circle (1.5pt);
     \fill (q3)  circle (1.5pt);
  
\node[above] at (r4) {$h_{1}$};
\node[above] at (r5) {$h_{2}$};
 \node[left] at (p4) {$V_{1}$};
\node[right] at (p5) {$V_{2}$}; 

   \draw [decorate,decoration={brace}]	(-.15,-.05) -- (-.15,1.5) node [midway,left] {$\Gamma_\vert^2$};

          \draw[->] (2,1) -- (3,1);     
    \end{scope}

  \begin{scope}[xshift=3.5cm]
\coordinate (p0) at (1,1);
\coordinate (p1) at (0,1);
\coordinate (p2) at (1,0);
\coordinate (p3) at (1,1.5);
\coordinate (p4) at (.3,1.5);
\coordinate (p5) at (1.5,1.5);

\draw[double] (p0) --(p1)coordinate[pos=.5] (r1);
\draw[double] (p0) --(p2)coordinate[pos=.5] (r2);
\draw[double] (p0) --(p3)coordinate[pos=.5] (r3);
\draw[double] (p4) --(p5)coordinate[pos=.25] (r4)coordinate[pos=.75] (r5);

      \foreach \i in {0,1,...,5}
   \fill (p\i)  circle (1.5pt);

\node[above] at (r4) {$h_{1}$};
\node[above] at (r5) {$h_{2}$};
 \node[left] at (p4) {$V_{1}$};
\node[right] at (p5) {$V_{2}$}; 
      
\draw[->] (2,1) -- (3,1);     
    \end{scope}

      \begin{scope}[xshift=7cm]
\coordinate (p0) at (1,1);
\coordinate (p1) at (0,1);
\coordinate (p2) at (1,0);
\coordinate (p3) at (1,1.5);
\coordinate (p4) at (.3,2.5);
\coordinate (p5) at (1.5,1.5);
\coordinate (q2) at (1,2);
\coordinate (q3) at (1,2.5);

\draw[double] (p0) --(p1)coordinate[pos=.5] (r1);
\draw[double] (p0) --(p2)coordinate[pos=.5] (r2);
\draw[double] (p0) --(p3)coordinate[pos=.5] (r3);
\draw[double] (p3) --(p5)coordinate[pos=.5] (r5);
\draw[double] (q3) --(p4)coordinate[pos=.5] (r4);
\draw[postaction={decorate}] (p3) -- (q2) coordinate[pos=.5] (s1);
\draw[postaction={decorate}] (q3) -- (q2) coordinate[pos=.5] (s2);

      \foreach \i in {0,1,...,5}
   \fill (p\i)  circle (1.5pt);
     \fill (q2)  circle (1.5pt);
     \fill (q3)  circle (1.5pt);
  
\node[above] at (r4) {$h_{1}$};
\node[above] at (r5) {$h_{2}$};
 \node[left] at (p4) {$V_{1}$};
\node[right] at (p5) {$V_{2}$}; 
    \end{scope}
\end{tikzpicture}
 \caption{Pumping mass from a pendent vertical segment $[V_1,V_2]=\Gamma^1_\vert $ to the core graph~$\Gamma_\vert^2$.} \label{Pump}
\end{figure}

\subsection{Proof of Lemma~\ref{lmm:Line2Bush}}\label{sec:prooflema}
 Starting with the two bush graph,  we detach $(g-s)$ pairs of consecutive little $\hbar=1/2$ twigs from their root. The graph after detaching the first pair is  shown in the left of 
 Figure~\ref{2Bush}. We do the same for the pairs of consecutive  big $\hbar=1$ twigs and obtain  the graph in the right of Figure~\ref{2Bush}. Finally it suffices to 'rotate' each horizontal segment counterclockwise to obtain the linear graph. The intermediate positions of  
 horizontal components are indicated by dotted/dashed lines on the same Figure~\ref{2Bush}
 on the right. 

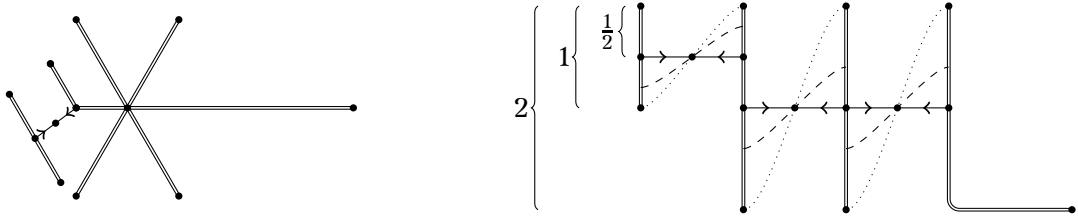
\begin{figure}[ht]
 \centering
\begin{tikzpicture}[scale=1.35,decoration={
    markings,
    mark=at position 0.5 with {\arrow[thick]{>}}}]
   
   \begin{scope}[]
 \coordinate (p0) at (0,0);
 \foreach \i in {1,2,...,5}
\coordinate (p\i) at (60*\i:1);
\coordinate (p3) at (-1/2,0);
\coordinate (q0) at (2.2,0);
          \foreach \i in {1,2,...,5}
   \draw[double] (p0) -- (p\i);
   \coordinate (p6) at (-1/2,0);
   \draw[double] (p6) -- ++(120:.5) coordinate (q1);
   \coordinate (q4) at (-.70,-.15);\coordinate (q5) at (-.90,-.3);
   \draw[postaction={decorate}] (p6) --(q4);   \draw[postaction={decorate}] (q5) --(q4);
   \draw[double] (q5) -- ++(120:.5)coordinate (q2);   \draw[double] (q5) -- ++(-60:.5)coordinate (q3);
   
    \draw[double] (p0) -- (q0);
    \foreach \i in {0,1,...,5}
   \fill (q\i)  circle (1pt);
       \foreach \i in {0,1,...,6}
   \fill (p\i)  circle (1pt);
   \end{scope}

   \begin{scope}[xshift=4cm]
 \foreach \i in {1,2,...,4}
\coordinate (p\i) at (\i,1);
          \foreach \i in {2,3,4}
\coordinate (q\i) at (\i/2,1/2);
          \foreach \i in {1,2,...,8}
\coordinate (r\i) at (\i/2,0);
\coordinate (r7) at (3.5,0);
          \foreach \i in {2,3,4}
\coordinate (s\i) at (\i,-1);
\coordinate (s5) at (5.2,-1);

   \draw[double] (p1) -- (r2);
                \foreach \i in {2,3}
   \draw[double] (p\i) -- (s\i);
       \draw[double,rounded corners] (p4) -- (s4) -- (s5);
      \draw[postaction={decorate}] (q2) -- (q3);\draw[postaction={decorate}] (q4) -- (q3);
      \draw[postaction={decorate}] (r4) -- (r5);\draw[postaction={decorate}] (r6) -- (r5);
\draw[postaction={decorate}] (r6) -- (r7);\draw[postaction={decorate}] (r8) -- (r7);

 \foreach \i in {1,2,...,4}
\fill (p\i) circle (1pt);
 \foreach \i in {1,2,...,4}
\fill (q\i) circle (1pt);
 \foreach \i in {4,5,...,8}
\fill (r\i) circle (1pt);
\fill (r2) circle (1pt);
   \foreach \i in {2,3,5}
 \fill (s\i) circle (1pt);
    \draw [decorate,decoration={brace}]	(0,-1) -- (0,1) node [midway,left] {$2$};
        \draw [decorate,decoration={brace}]	(.4,0) -- (.4,1) node [midway,left] {$1$};
\draw [decorate,decoration={brace}]	(.85,.5) -- (.85,1) node [midway,left] {$\frac{1}{2}$};

\draw[dashed] (1,.2) .. controls ++(0:.2) and ++(180:.2) .. (2,.8);
\draw[dashed] (2,-.4) .. controls ++(0:.2) and ++(180:.2) .. (3,.4);
\draw[dashed] (3,-.4) .. controls ++(0:.2) and ++(180:.2) .. (4,.4);

\draw[dotted] (1,0) .. controls ++(0:.2) and ++(180:.2) .. (2,1);
\draw[dotted] (2,-1) .. controls ++(0:.3) and ++(180:.3) .. (3,1);
\draw[dotted] (3,-1) .. controls ++(0:.3) and ++(180:.3) .. (4,1);
   \end{scope}
   \end{tikzpicture}
 \caption{The two Bush form $\Gamma^*(s,g,n)$ with $s=2$ and $g=3$ just after detaching the first pair of small twigs and its deformation into the standard line form $\Gamma(s,g,n)$. The numbers designate the heights of the edges.} \label{2Bush}
\end{figure}

\smallskip
\par
For the deformation from the two bush graph $\Gamma^*(s,g,n)$ to $\Gamma^*(s-1,g,n)$ we 
detach a bunch of $(g-s)$ pairs of neighbouring twigs from the small bush, roll  
the bunch and attach it to the midpoint of neighbouring twig of the large bush provided $s>0$ as shown on the left of Figure~\ref{2Bush2}. We obtain $s-1$ twigs of unit height 
to the right of the new small bush and $s+1$ twigs of $\hbar=1$ to the right counted from the root of the large bush. Now we detach a couple of neighbouring unit height twigs
from the larger part of the large bush, roll the  $\hbar=2$ pendent vertical segment and attach it to the endpoint of the tail. Since the height of the tail is even we get the 
graph in the right of Figure~\ref{2Bush2}. A unit height edge incident to the endpoint of the tail may be detached, rolled toward the small bush and attached to its root.
Thus we obtain the standard graph $\Gamma^*(s-1,g,n)$.
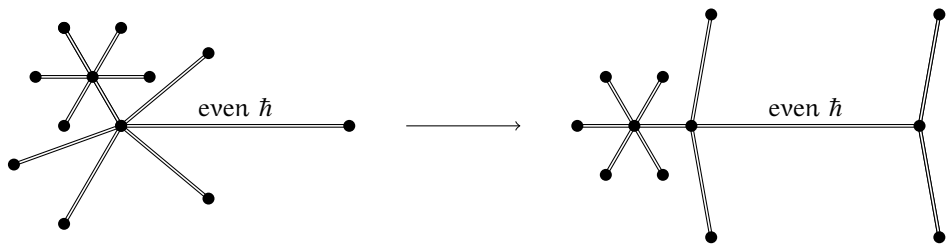
\begin{figure}[ht]
 \centering
\begin{tikzpicture}[scale=1.5,decoration={
    markings,
    mark=at position 0.5 with {\arrow[thick]{>}}}]
 \coordinate (p0) at (0,0);
 \coordinate (p1) at (40:1);
 \coordinate (p2) at (120:1);
 \coordinate (p3) at (200:1);
 \coordinate (p4) at (-120:1);
 \coordinate (p5) at (-40:1);
\coordinate (q0) at (2,0);
          \foreach \i in {1,2,...,5}
   \draw[double] (p0) -- (p\i);
   \coordinate (p6) at (120:1/2);
             \foreach \i in {1,2,3,4,5,6}
   \draw[double] (p6) -- ++(60*\i:.5) coordinate (q\i);
    \draw[double] (p0) --node[above] {\text{\small even $\hbar$}} (q0);
    \foreach \i in {0,1,2,3,4,5,6}
   \fill (q\i)  circle (1.5pt);
       \foreach \i in {0,1,2,3,4,5,6}
   \fill (p\i)  circle (1.5pt);

   \draw[->] (2.5,0) -- (3.5,0);
   
   \begin{scope}[xshift=5cm]
\coordinate (p0) at (0,0);
\coordinate (p2) at (80:1);
\coordinate (p3) at (-80:1);
\coordinate (q0) at (2,0);
 \draw[double] (q0) --++ (80:1) coordinate (p1);
  \draw[double] (q0) --++ (-80:1) coordinate (p4);
          \foreach \i in {2,3}
   \draw[double] (p0) -- (p\i);
             \foreach \i in {1,4}
   \draw[double] (q0) -- (p\i);
   \coordinate (p6) at (-1/2,0);
      \draw[double] (p0) -- (p6);
             \foreach \i in {1,2,3,4,5}
   \draw[double] (p6) -- ++(60*\i:.5) coordinate (q\i);
    \draw[double] (p0) --node[above] {\text{\small even $\hbar$}} (q0);
    \foreach \i in {0,1,2,3,4,5}
   \fill (q\i)  circle (1.5pt);
       \foreach \i in {0,1,2,3,4,6}
   \fill (p\i)  circle (1.5pt);
   \end{scope}

   \end{tikzpicture}
 \caption{An isoperiodic transformation between the two bush graphs $\Gamma^*(s,g,n)$  and $\Gamma^*(s-1,g,n)$ when $s+g+n$ is odd.} \label{2Bush2}
\end{figure}

\smallskip
\par
\subsection{Proof of Theorem~\ref{thm:isoperdef}}
Let  $\Gamma(M)$  be a weighted graph 
 associated to a Riemann surface $M$ of genus  $g>0$
corresponding to Equation~\eqref{PA} with a primitive solution of degree 
$n\geq g+1$. We prove that  $\Gamma(M)$ may be isoperiodically deformed to the two bush graph $\Gamma^*(s,g,n)$ for some $s=0,1,\dots, m^*$, where $m^*:=\min(g-1,n-g-1)$. 

The proof splits into several consecutive steps:
\begin{enumerate}[(1)]
 \item Collapsing of the horizontal component of the graph to obtain a purely vertical graph.
 \item Detaching the vertical segment of minimal possible length $\hbar=1$.
 \item Bringing the core graph to the standard form by recursion on the genus~$g$.
\end{enumerate}

\smallskip
\par
\paragraph{Stage 1: obtaining a purely vertical graph.}
Let $\Gamma(M)$ be any graph satisfying the hypothesis of Theorem~\ref{thm:isoperdef}.
The elimination of its horizontal component may be achieved by linearly decreasing to zero the values of the width function $W(V)$ at all vertices~$V$ of the horizontal subgraph. The only drawback of this deformation is that  some branchpoints may collide in the final instant of the deformation. To prevent it we may preliminary "rotate" every horizontal component of the graph by shifting all its points of intersection with the vertical subgraph by the same small value $\delta h$ and in the same direction to avoid passing through the branchpoints. An example of the rotation is shown in Figure~\ref{fig:rothor}.
After the contraction of its horizontal component, the graph is composed of vertical edges only.

\begin{figure}[ht]
 \centering
\begin{tikzpicture}[scale=1.4,decoration={
    markings,
    mark=at position 0.5 with {\arrow[thick]{>}}}]

\coordinate (p0) at (0,-.6);
\coordinate (p1) at (0,.5);
\coordinate (p2) at (2,-.5);
\coordinate (p3) at (2,.7);
\coordinate (q1) at (1,0);
\coordinate (q0) at (0,0);
\coordinate (q2) at (2,0);
\coordinate (p4) at (1,1);
\coordinate (p5) at (1.4,1);
\coordinate (p6) at (.4,1);
\coordinate (p7) at (1,1.5);

\draw[postaction={decorate}] (q0) -- (q1);\draw[postaction={decorate}] (q2) -- (q1);
\draw[double] (q0) --(p1)coordinate[pos=.25] (s1)coordinate[pos=.7] (s2);\draw[double] (p2) --(p3)coordinate[pos=.3] (s3)coordinate[pos=.2] (s3)coordinate[pos=.8] (s4);
\draw[double] (p5) --(p6);\draw[double] (p4) --(p7);
\draw[postaction={decorate}] (p4) -- (q1);

   \fill (p4)  circle (1.5pt);
         \foreach \i in {0,1,2}
  \fill (q\i)  circle (1.5pt); 
  
  \draw[postaction={decorate},dashed] (0,.2) .. controls ++(0:.2) and ++(120:.2) .. (q1);
    \draw[postaction={decorate},dashed] (2,-.2) .. controls ++(180:.2) and ++(-60:.2) .. (q1);
      \draw[postaction={decorate},dashed] (1.2,1) .. controls ++(-90:.2) and ++(30:.2) .. (q1);
  
  \draw [decorate,decoration={brace}]	(-0.14,0) -- (-0.14,.2) node [midway, left] {$h$};
  \draw [decorate,decoration={brace}]	(2.14,0) -- (2.14,-.2) node [midway, right] {$h$};
  \draw [decorate,decoration={brace}]	(1.02,1.14) -- (1.2,1.14) node [midway, above] {$h$};

\end{tikzpicture}
\caption{Clockwise rotation of a component of $\Gamma_\hor$, new position of the horizontal component is dashed.}
\label{fig:rothor}
\end{figure}
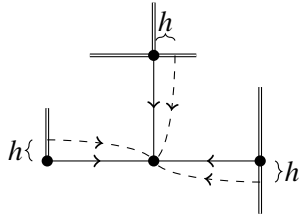

\smallskip
\par
\paragraph{Stage 2: creating a pendent segment of height $\hbar=1$.}

Let us first show that there exist two hanging edges neighbouring with respect to the cyclic order around some vertex~$V$ of the graph. Indeed, take any vertex $L$ of the graph $\Gamma$.  Choose any vertex $V_1$ of~$\Gamma$ at the maximal path length from $L$ (i.e. the number of edges in the path joining them). This is necessarily a hanging vertex of $\Gamma$ and the previous vertex $V$ in the path $[L,V_1]$ on the graph is at distance one less from~$L$. The degree 
$d(V)=d_\vert(V)>1$ since $g>0$, moreover $d(V)\neq2$ due to Property~(T3) of admissible graphs, hence the node $V$ is joined to yet another vertex $V_2$ at the same distance  from $L$ as $V_1$.  This vertex $V_2$ is hanging too, so the edges joining $V$ to $V_1$ and $V_2$ are the desired ones.

 We are going to create a pendent vertical segment of minimal height $\hbar=1$ using the two transformations of \emph{rolling} and \emph{pumping}. Detach  from the rest of the graph the  vertical segment $[V_1,V_2]:=\Gamma^1_\vert$ obtained above. Then
 roll it around the core graph $\Gamma^2_\vert$ and pump its mass whenever possible. Since the height of the  segment is always an integer it cannot diminish ad infinitum. Hence it stabilizes at some integer $l\geq1$.  If $l>1$, then all $\hbar$-distances between neighboring branchpoints on the boundary of the core graph are divisible by $l$ and hence all the periods of $d\eta_M$ lie in the coarse lattice corresponding to the integer $n/l$. This would mean that the solution of degree $n$ of Equation~\eqref{PA} is not primitive.

This  pendent vertical segment of height $\hbar=1$ is called the \emph{catalyst}.
We can roll it to any convenient place of the rest of the graph where is does not interfere with further manipulations. In particular, for $g=1$ we can attach it to the core graph to obtain the two bush graph $\Gamma^{\ast}(0,1,n)$.

\smallskip
\par
\paragraph{Stage 3: Induction Step for $g\geq2$.}
Let us consider the core graph $\Gamma^2_\vert$ obtained after detaching the catalyst 
as a separate graph equipped with the present $\hbar$ heights of vertical edges.
The graph $\Gamma^2_\vert$  corresponds to a Pell-Abel equation admitting degree $n-1$
solution. Once the solution is primitive, we bring the graph to the two bush form $\Gamma^*(s,g-1,n-1)$  by the induction hypothesis. Then we roll the catalyst toward the smaller bush and attach it at its root. Thus we obtain $\Gamma^*(s,g,n)$ with parameter $s$ in the admissible range of values.

Suppose now that the Pell-Abel equation  corresponding to the core graph $\Gamma^2_\vert$
admits a primitive solution of smaller degree  $n'=(n-1)/l$ for some integer $l\ge2$. 
By the induction hypothesis, $\Gamma^2_\vert$ may be isoperiodically transformed into 
a two-bush graph $\Gamma^*(s,g-1,n')$ which however uses another scale for the weights of the edges. To return to our initial units we should multiply all the heights of this graph by the integer factor $l=(n-1)/n'$.

Recall that the catalyst of unit height is joined to the rescaled two bush core graph by a cord. We attach the catalyst to a hanging twig of the smaller bush (they exist in the worst case $s=g-1$) at the distance $\hbar=1 \ge l/2$ from the endpoint.
Then we detach the $\hbar=2$ vertical segment (composed of the catalyst and part of the small bush twig) from the graph. The procedure is allowed even in the worst case $l=2$. In that case the catalyst is attached to the root of the small bush, which is not a branch point.
\smallskip
\par
We claim that the remained core graph ${\Gamma_\vert^2}'$ corresponds to Equation~\eqref{PA} admitting primitive solution of degree $n-2$. Indeed, the $\hbar$ distances between the branchpoints along the boundary of ${\Gamma_\vert^2}'$ are all integer and  include the coprime numbers $l$ and $l-1$ (on the small bush). Now the induction step may be applied again and we replace the core graph~${\Gamma_\vert^2}'$ by the two bush form  $\Gamma^*(s,g-1,n-2)$. The pendent segment of height $\hbar=2$ may be attached by its midpoint to the obtained two-bush form either
\begin{enumerate}[(i)]
 \item at the root of the large bush, as pictured left of Figure~\ref{IndStep1}, or
 \item  at the tip of the large bush twig (this happens only when $s>0$) as shown in the upper left picture of Figure~\ref{IndStep2}.
\end{enumerate}
\smallskip
\par
\paragraph{Case (i):}
We detach a bunch of $(g-s-1)$ pairs of twigs from the small bush, roll them and attach to midpoint of the nearest twig of the large bush, as illustrated in the right of Figure~\ref{IndStep1} where the dashed curves show the final position of the horizontal component.
Thus we obtain the graph $\Gamma^*(s+1,g,n)$. Note that $s+1$ is an admissible value of parameter for given $g$ and $n$ when $s\le \min(g-2, n-g-2)$.

\begin{figure}[ht]
 \centering
\begin{tikzpicture}[scale=1.5,decoration={
    markings,
    mark=at position 0.5 with {\arrow[thick]{>}}}]
 \coordinate (p0) at (0,0);
  \coordinate (p1) at (60:1);
   \coordinate (p2) at (90:1);
\coordinate (p3) at (-80:1);
\coordinate (p4) at (-40:1);
\coordinate (q0) at (2.2,0);
          \foreach \i in {1,2,...,4}
   \draw[double] (p0) -- (p\i);
   \coordinate (p6) at (-1/2,0);
      \draw[double] (p0) -- (p6);
             \foreach \i in {1,2,3,4,5}
   \draw[double] (p6) -- ++(60*\i:.5) coordinate (q\i);
    \draw[double] (p0) -- (q0);
    \foreach \i in {0,1,2,3,4,5}
   \fill (q\i)  circle (1.5pt);
       \foreach \i in {0,1,2,3,4,6}
   \fill (p\i)  circle (1.5pt);
   
   \coordinate (b) at (45:1);
   \coordinate (c) at (45:2);
   \draw[double] (c) -- ++(-45:1) coordinate (a1);
   \draw[double] (c) -- ++(135:1) coordinate (a2);
   \draw[postaction={decorate}] (p0) -- (b);   \draw[postaction={decorate}] (c) -- (b);
      \fill (a1)  circle (1.5pt);      \fill (a2)  circle (1.5pt);
            \fill (c)  circle (1.5pt);      \fill (b)  circle (1.5pt);
            
          \draw[->] (2.7,0) -- (3.7,0);
          
                    \begin{scope}[xshift = 5.5cm]
  \coordinate (p0) at (0,0);
  \coordinate (p1) at (60:1);
   \coordinate (p2) at (90:1);
\coordinate (p3) at (-80:1);
\coordinate (p4) at (-40:1);
\coordinate (q0) at (2.2,0);
          \foreach \i in {1,2,...,4}
   \draw[double] (p0) -- (p\i);
   \coordinate (p6) at (-1/2,0);
      \draw[double] (p0) -- (p6);
             \foreach \i in {5}
   \draw[double] (p6) -- ++(60*\i:.5) coordinate (q\i);
    \draw[double] (p0) -- (q0);

   \coordinate (r) at (-.75,.25);
   \coordinate (s) at (-1,.5);
     \foreach \i in {1,2,3,4}
      \draw[double] (s) --++ (60*\i:1/2)coordinate  (q\i);
   \draw[postaction={decorate}] (p6) -- (r);   \draw[postaction={decorate}] (s) -- (r);
              \fill (r)  circle (1.5pt);      \fill (s)  circle (1.5pt);

     \coordinate (b) at (20:1);
   \coordinate (c) at (40:1);
   \draw[double] (c) --  (p0);
   \draw[double] (b) --  (p0);
            \fill (c)  circle (1.5pt);      \fill (b)  circle (1.5pt);

    \foreach \i in {0,1,2,3,4,5}
   \fill (q\i)  circle (1.5pt);
       \foreach \i in {0,1,2,3,4,6}
   \fill (p\i)  circle (1.5pt);

   \coordinate (t) at (0,.5);    \fill (t)  circle (1.5pt);
\draw[dashed] (t) .. controls ++(180:.3) and ++(60:.3) .. (r) .. controls ++(-120:1) and ++(-150:1) .. (s);
   \end{scope}
   \end{tikzpicture}
 \caption{The induction step in case (i) for $g=5$ and $s=2$.} \label{IndStep1}
\end{figure}
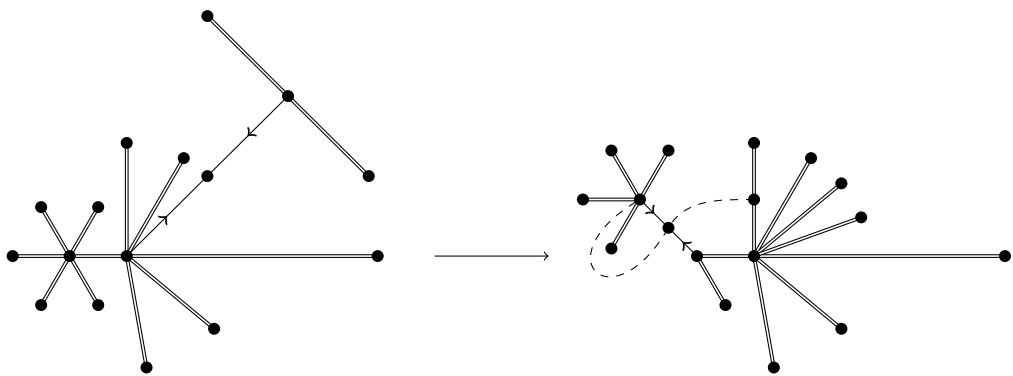
\smallskip
\par
\paragraph{Case (ii):}  We detach one of the unit height twigs at the tip of the large bush twig, roll it around the core graph and attach to the root of the small bush as shown in  the right top picture of Figure~\ref{IndStep2}. Next, we detach the union of the tail (which may be of height~$0$) of two-bush graph and the neighbouring $\hbar=2$ edge. We roll it along the neighboring twig of height $\hbar=1$  as pictured bottom left of Figure~\ref{IndStep2}. Finally we attach it to the core graph at the root of the large bush as pictured bottom right of Figure~\ref{IndStep2}.  The root of the larger bush is now a branch point, so one twig of this bush may be detached and replanted to the smaller bush as we just did. We obtain the two bush graph $\Gamma^*(s-1,g,n)$. This completes the proof.

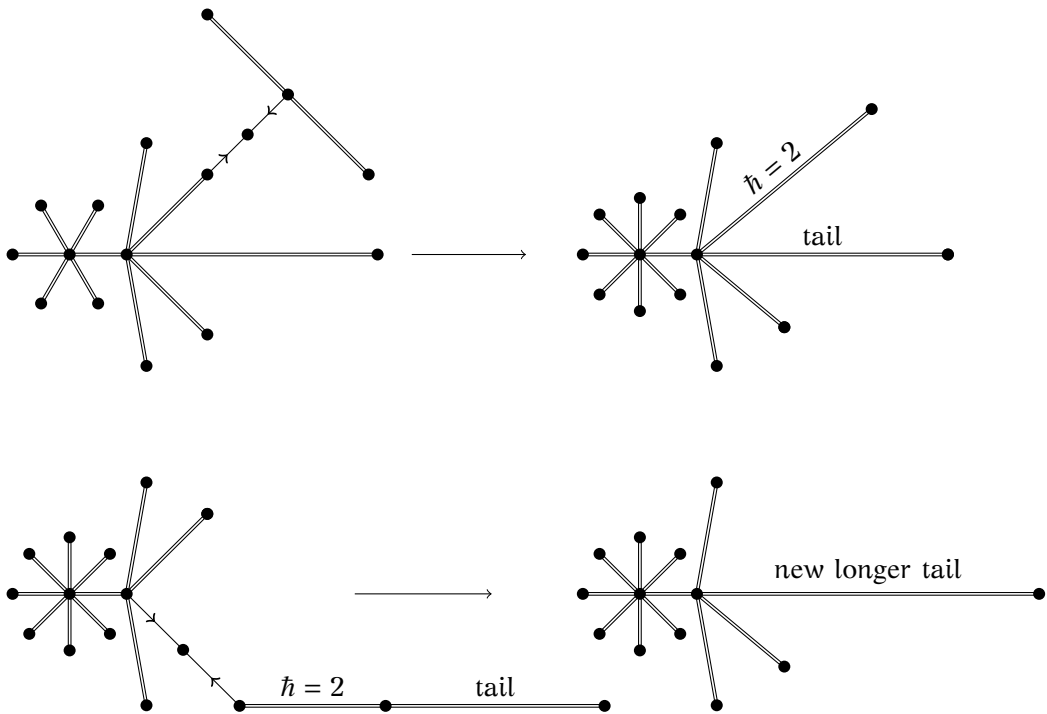
\begin{figure}[ht]
 \centering
\begin{tikzpicture}[scale=1.5,decoration={
    markings,
    mark=at position 0.5 with {\arrow[thick]{>}}}]
 \coordinate (p0) at (0,0);
  \coordinate (p1) at (45:1);
   \coordinate (p2) at (80:1);
\coordinate (p3) at (-80:1);
\coordinate (p4) at (-45:1);
\coordinate (q0) at (2.2,0);
          \foreach \i in {1,2,...,4}
   \draw[double] (p0) -- (p\i);
   \coordinate (p6) at (-1/2,0);
      \draw[double] (p0) -- (p6);
             \foreach \i in {1,2,3,4,5}
   \draw[double] (p6) -- ++(60*\i:.5) coordinate (q\i);
    \draw[double] (p0) -- (q0);
    \foreach \i in {0,1,2,3,4,5}
   \fill (q\i)  circle (1.5pt);
       \foreach \i in {0,1,2,3,4,6}
   \fill (p\i)  circle (1.5pt);
   
   \coordinate (b) at (45:1.5);
   \coordinate (c) at (45:2);
   \draw[double] (c) -- ++(-45:1) coordinate (a1);
   \draw[double] (c) -- ++(135:1) coordinate (a2);
   \draw[postaction={decorate}] (p1) -- (b);   \draw[postaction={decorate}] (c) -- (b);
      \fill (a1)  circle (1.5pt);      \fill (a2)  circle (1.5pt);
            \fill (c)  circle (1.5pt);      \fill (b)  circle (1.5pt);
            
          \draw[->] (2.5,0) -- (3.5,0);
          
                    \begin{scope}[xshift = 5cm]
 \coordinate (p0) at (0,0);
\coordinate (p1) at (40:2);
\coordinate (p2) at (80:1);
\coordinate (p3) at (-80:1);
\coordinate (p4) at (-40:1);
\coordinate (q0) at (2.2,0);
          \foreach \i in {2,...,4}
   \draw[double] (p0) -- (p\i);
      \draw[double] (p0) --node[above,sloped] {$\hbar = 2$} (p1);
   \coordinate (p6) at (-1/2,0);
      \draw[double] (p0) -- (p6);
             \foreach \i in {1,2,...,7}
   \draw[double] (p6) -- ++(45*\i:.5) coordinate (q\i);
    \draw[double] (p0) --node [midway, above] {tail} (q0);
    \foreach \i in {0,1,2,...,7}
   \fill (q\i)  circle (1.5pt);
       \foreach \i in {0,1,2,3,4,6}
   \fill (p\i)  circle (1.5pt);
   \end{scope}
   
   \begin{scope}[yshift=-3cm]
     \coordinate (p0) at (0,0);
  \coordinate (p1) at (45:1);
   \coordinate (p2) at (80:1);
\coordinate (p3) at (-80:1);
          \foreach \i in {1,2,...,3}
   \draw[double] (p0) -- (p\i);
   \coordinate (p6) at (-1/2,0);
      \draw[double] (p0) -- (p6);
               \foreach \i in {1,2,...,7}
   \draw[double] (p6) -- ++(45*\i:.5) coordinate (q\i);
    \foreach \i in {0,1,2,...,7}
   \fill (q\i)  circle (1.5pt);
       \foreach \i in {0,1,2,3,4,6}
   \fill (p\i)  circle (1.5pt);
   
   \coordinate (b) at (-45:.7);
   \coordinate (c) at (-45:1.4);
   \draw[double] (c) -- ++(0:3.2) coordinate (a1)coordinate[pos=.4] (d1)coordinate[pos=.2] (d2)coordinate[pos=.7] (d3);
   \draw[postaction={decorate}] (p0) -- (b);   \draw[postaction={decorate}] (c) -- (b);
      \fill (a1)  circle (1.5pt);          \fill (d1)  circle (1.5pt);      
            \fill (c)  circle (1.5pt);      \fill (b)  circle (1.5pt);
            \node[above] at (d2) {$\hbar=2$};  \node[above] at (d3) {tail};
            
          \draw[->] (2,0) -- (3.2,0);
          
    \begin{scope}[xshift = 5cm]
 \coordinate (p0) at (0,0);
\coordinate (p2) at (80:1);
\coordinate (p3) at (-80:1);
\coordinate (p4) at (-40:1);
\coordinate (q0) at (3,0);
          \foreach \i in {2,...,4}
   \draw[double] (p0) -- (p\i);
   \coordinate (p6) at (-1/2,0);
      \draw[double] (p0) -- (p6);
             \foreach \i in {1,2,...,7}
   \draw[double] (p6) -- ++(45*\i:.5) coordinate (q\i);
    \draw[double] (p0) --node [midway, above] {new longer tail} (q0);
    \foreach \i in {0,1,2,...,7}
   \fill (q\i)  circle (1.5pt);
       \foreach \i in {0,1,2,3,4,6}
   \fill (p\i)  circle (1.5pt);
   \end{scope}
   \end{scope}

   \end{tikzpicture}
 \caption{The induction step in case (ii) for $g=5$ and $s=2$.} \label{IndStep2}
\end{figure}

\begin{rmk}
A proof without recursion is available too, but it is a bit longer and the deformations are more involved. 
\end{rmk}

 \section{Isoperiodic invariants}
\label{sec:GlobInv}
In this section, we show that the genus
$g$ hyperelliptic Riemann surfaces associated to Pell-Abel equations
admitting a primitive solution of degree $n$ corresponding to inequivalent linear graphs
$\Gamma(s,g,n)$ described in Section~\ref{sec:2stdforms} do live in different components of~$\mathscr{A}_{g}^{n}$.
To this end we introduce two global invariants of the isoperiodic transformation
and compute it for all graphs $\Gamma(s,g,n)$.

The first invariant is
based on the partition of the degree
of the polynomial $D(x)$ of Pell-Abel equation~\eqref{PA} into two summands. For this reason we call it the {\em degree partition invariant}. Its elementary construction is
 given in Section~\ref{sec:partition}. We describe a way to compute it using graphs and show that each admissible
partition is realized by a unique linear graph $\Gamma(s,g,n)$.
This completes the proof of Theorem~\ref{main} and the reader could stop there.

The other invariant described in Section~\ref{sec:braid} possesses a much more rich geometric content:
it is related to to braids which describe the motions of unordered
branching sets $\sf E$ in
the plane without collisions of any individual branchpoints.  Hence this invariant is referred as the {\em braid invariant}. The
construction of this invariant is far
less elementary, nonetheless numerically it  coincides  with the degree partition invariant. However it gives a more deep immersion into the geometry of the problem. No doubt, it will be used
for further research in the topic.

\subsection{Degree partition invariant}
\label{sec:partition}
The value of a solution $P$ of Equation~\eqref{PA} at a zero $e\in\sf E$ of the polynomial $D$ may
be either $+1$ or $-1$. 
Therefore, the set $\sf E$ of zeroes of $D$ is split into two subsets ${\sf E}^\pm$.
Since we cannot globally distinguish between solutions $P$ and
$-P$ we consider the cardinalities of those sets as
an unordered partition of $|{\sf E}|=\deg D:=2g+2$. In particular we may assume that
$$
|{\sf E}^-|\le g+1\le |{\sf E}^+| \,.
$$
The \emph{degree partition invariant} of $D\in \mathscr{A}_{g}^{n}$ is the unordered pair 
$(|{\sf E}^-|,|{\sf E}^+|)$ computed for the primitive solution $\pm P(x)$.

This invariant is easily computable from the graph $\Gamma$ of
the associated curve. The \emph{$\hbar$-distance between any two branchpoints of the curve along
the boundary of the graph should be integer. It may be either even or odd depending of whether
those branchpoints lie in the same group ${\sf E}^\pm$ or in different ones.}
To compute the value of the solution $P$ at any branchpoint $e_s$
we use Formula~\eqref{PQ}. The integral of the distinguished differential between
$e_1$ and $e_s$ has been already computed in section~\ref{sec:perhombas} in terms of weights $h$: it is the sum of all vertical weights of the edges on a path between $e_1$ and $e_s$  along the boundary
of~$\Gamma$. Now by substituting the heights $\hbar$ of the
edges instead of their weights~$h$,  we get the justification of the above rule.

\begin{lmm} \label{lem:restdpi}
The degree partition invariant  $(|{\sf E}^-|,|{\sf E}^+|)$ of $D\in \mathscr{A}_{g}^{n}$ having a primitive solution $P$ of degree $n$ satisfies:
\begin{enumerate}[1)]
 \item $|{\sf E}^\pm|>0$,
 \item $|{\sf E}^\pm|\leq n$,
 \item the parity of $|{\sf E}^\pm|$ is equal to the parity of $n$.
\end{enumerate}
\end{lmm}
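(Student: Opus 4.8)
The plan is to argue algebraically from the factorisation of Pell-Abel equation rather than from the graph. Writing $P^2-1=(P-1)(P+1)=DQ^2$, I first note that $P-1$ and $P+1$ are coprime, since their difference is the nonzero constant $2$. Every $e\in{\sf E}$ is a simple root of $D$ with $P(e)=\pm1$; the points with $P(e)=+1$ are exactly the roots of $P-1$ that form ${\sf E}^+$, and those with $P(e)=-1$ form ${\sf E}^-$ and are roots of $P+1$. Because $D$ is square free, the partial products $D^{+}:=\prod_{e\in{\sf E}^+}(x-e)$ and $D^{-}:=\prod_{e\in{\sf E}^-}(x-e)$ satisfy $D=D^+D^-$, with $D^+\mid(P-1)$ and $D^-\mid(P+1)$. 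I then write $P-1=D^+R_+$ and $P+1=D^-R_-$, so that $R_+R_-=Q^2$. Since $P-1$ and $P+1$ are coprime, so are $R_+$ and $R_-$, and a coprime factorisation of a square in $\CC[x]$ forces each factor to be a nonzero constant times the square of a polynomial, say $R_\pm=c_\pm A_\pm^2$.

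Conditions (2) and (3) then fall out by degree counting. As $\deg P=n\ge1$ we have $\deg(P-1)=\deg(P+1)=n$, so from $D^+\mid(P-1)$ and $D^-\mid(P+1)$ I read off $|{\sf E}^+|=\deg D^+\le n$ and $|{\sf E}^-|\le n$, giving (2). For (3), the identity $n=\deg(P-1)=\deg D^+ +\deg R_+=|{\sf E}^+|+2\deg A_+$ shows $n\equiv|{\sf E}^+|\pmod 2$, and symmetrically $n\equiv|{\sf E}^-|\pmod2$. Neither of these steps uses primitivity.

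The statement (1), that neither ${\sf E}^+$ nor ${\sf E}^-$ is empty, is the crux and the only place where primitivity is essential: it genuinely fails for non-primitive solutions, since for $D=x^2-1$ the degree-$2$ solution $P=2x^2-1$ has $P(\pm1)=1$, so ${\sf E}^-=\emptyset$. To exclude this under primitivity I would pass to the curve $M$ and the Akhiezer function $f=P+wQ\in\CC(M_{\infty})$, which satisfies $f\cdot Jf=1$, hence $P=\tfrac12(f+1/f)$ and $P+1=(f+1)^2/(2f)$, $P-1=(f-1)^2/(2f)$. Suppose ${\sf E}^-=\emptyset$; then $D^-=1$ and $P+1=c_-A_-^2$, so $(f+1)^2/(2f)=c_-A_-^2$ yields $f=\big((f+1)/(\sqrt{2c_-}\,A_-)\big)^2$, i.e. $f$ is a perfect square in $\CC(M)$. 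Its divisor $n\infty_+-n\infty_-$ would then be twice an integral divisor, forcing $\tfrac n2(\infty_+-\infty_-)$ to be principal; but by Remark~\ref{rem:solve} the divisor $\infty_+-\infty_-$ has order exactly $n$ for a primitive degree-$n$ solution, and $\tfrac n2<n$, a contradiction (when $n$ is odd the contradiction is even more immediate, as a divisor with odd coefficients cannot be twice an integral divisor). The case ${\sf E}^+=\emptyset$ is symmetric, using $P-1$.

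The main obstacle is therefore (1): (2) and (3) are formal consequences of the square-free factorisation, whereas (1) must exploit primitivity, and the cleanest leverage I see is the exact $n$-torsion of $\infty_+-\infty_-$. One could alternatively phrase the whole argument through the graph, where the rule stated before the lemma reads $P(e_s)=(-1)^{H_s}$ with $H_s$ the integer $\hbar$-distance from a fixed branch point, and primitivity becomes the condition $\gcd(n,\{\hbar\text{-distances between consecutive branch points}\})=1$; then ${\sf E}^\pm=\emptyset$ would make all those distances even and violate the gcd condition. I would nonetheless keep the divisor-theoretic version as the primary proof, since it avoids the combinatorial bookkeeping and makes the role of primitivity transparent.
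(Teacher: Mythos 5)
Your proof is correct, and for the essential part 1) it takes a genuinely different route from the paper's. Parts 2) and 3) are proved in the paper by the same elementary degree count you give: the paper simply observes that the roots of $D$ are exactly the points where $P$ takes the value $\pm1$ with odd multiplicity, which is precisely what your coprime factorisation $P\mp 1=c_{\pm}D^{\pm}A_{\pm}^{2}$ spells out in detail. For 1), however, the paper stays inside its graphical calculus: $|{\sf E}^-|=0$ forces all $\hbar$-distances between branch points along the boundary of the associated graph to be even, and halving all heights produces, via Theorem~\ref{criterion}, a solution of degree $n/2$, contradicting primitivity. You argue instead on the curve: ${\sf E}^-=\emptyset$ makes $P+1$ a constant times a square, hence the Akhiezer function $f=P+wQ$ satisfies $f=\bigl((f+1)/(\sqrt{2c_-}\,A_-)\bigr)^{2}$, so the divisor $n(\infty_+-\infty_-)$ of $f$ is twice a principal divisor, contradicting the exact $n$-torsion of $\infty_+-\infty_-$ guaranteed by Remark~\ref{rem:solve}. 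Both arguments are valid; yours is independent of the graph machinery, makes the role of primitivity completely transparent, and treats odd $n$ directly --- note that the paper's halving argument tacitly presupposes that $n$ is even, which must be fed in from part 3), whereas your divisor-theoretic argument needs no such detour. What the paper's version buys is economy within the article: the same height/period bookkeeping is reused immediately afterwards to compute the degree partition invariant of the linear graphs, and you yourself sketch this combinatorial variant at the end. Two small corrections of wording: ``twice an integral divisor'' should read ``twice a principal divisor'', since the divisor of $h=(f+1)/(\sqrt{2c_-}\,A_-)$ is not effective; and the step from $n(\infty_+-\infty_-)=2\,(\text{divisor of }h)$ to the principality of $\tfrac n2(\infty_+-\infty_-)$ uses that the divisor group is free abelian, so division by $2$ is unambiguous --- worth stating explicitly.
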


\begin{proof}
 1) In the graph description, the fact that $|{\sf E}^-|=0$ happens exactly when all boundary $\hbar$
-distances between the branchpoints are even. Dividing all the heights of the graph by $2$, 
we get a solution $P$ of degree  twice less.

2) A nontrivial polynomial has at most its degree number of roots.

3) The set $\sf E$ of roots of $D$ is the set of $x\in \CC$ where $P$
takes value $\pm1$ with odd multiplicity.
\end{proof}

Finally, we compute the degree partition invariant of the linear graphs.
\begin{ex}
The degree partition invariant of the linear graph $\Gamma(s,g,n)$
has the following smaller element: 
\be
\label{DPinv}
|{\sf E}^-|= g-s+\alpha\,,
\ee
where $\alpha=(s+g+n)\mod 2 \in \lbrace 0,1\rbrace$.
We note that all linear graphs $\Gamma(s,g,n)$
correspond to different  partitions (and therefore belong to different
components of $\mathscr{A}_{g}^{n}$)
outside the cases explicitly described in Lemma~\ref{lmm:Line2Bush}.
\end{ex}

\begin{rmk} One can check by direct calculation that the number of invariants is exactly the number $a(g,n)$ of  components in Theorem~\ref{main}. This completes its proof.
\end{rmk}

To conclude this subsection, we compute the partition invariant of some Riemann surfaces defined over $\mathbb{Q}$.
\begin{ex}
In \cite[p. 30]{PlatoTorSurv} it is shown that the Pell-Abel equation~\eqref{PA} with the polynomial
$$D(x) = x^{6} + 6x^{4}+33x^{2}+24$$
has the  primitive solution 
$$P(x)=\frac{1}{24}x^{9} + \frac{3}{8}x^{7}+\frac{9}{4}x^{5}+6x^{3}+9x \text{ and } Q(x)=\frac{1}{24}x^{6} + \frac{1}{4}x^{4}+x^{2}+1 \,.$$
Now it is easy to check numerically that the vector of values of $P$ at the roots of $D$ contains $3$ times~$+1$ and $3$ times $-1$. Hence $D$ belongs to the component of degree partition invariant $(3,3)$.
\end{ex}

\subsection{Braid invariant}
\label{sec:braid}
We know what does the invariance of periods mean for small deformations of
the branching set $\sf E$, see e.g. the discussion at the end of
Section~\ref{sec:ModuliSpace}.  For large deformations, we should
somehow identify the integration cycles on remote surfaces $M({\sf
E})$. This is done via the parallel transport of cycles by the
Gauss-Manin connection  (see \cite[Section~I.1]{Vas} or
\cite[Chapter~5]{Bbook}).

Suppose that we move the branchpoints and simultaneously distort
a cycle $C$ so that the branchpoints  never cross its projection to
the $x$-plane.  In this way we transport
the cycle along some path $\tau$ in the space  $\tilde{\cal H}_g$ of
hyperelliptic Riemann surfaces with a pair of marked point at infinity
(identified with the space of complex monic square free polynomials
$D(x)$ of degree $2g+2$). The resulting cycle
 belongs to the  Riemann surface corresponding to the end of the path
and we denote it as $C\cdot\tau$,
 whereas $C$ itself belongs to the  Riemann surface at the beginning
of the path. This action of paths on the homology
 spaces of the  Riemann surfaces in $\tilde{\cal H}_{g}$ is
associative: $C\cdot(\tau\cdot\sigma)=(C\cdot\tau)\cdot\sigma$
provided all products are correctly defined, e.g. the end of $\tau$ is
the beginning of $\sigma$, etc.

\subsubsection{Braids and isoperiodic deformations}
Fix  an affine hyperelliptic  Riemann surface $M_1$ whose
branchpoints $e_{1}<e_{2}<\cdots<e_{2g+2}$ are real. We introduce the
standard homology basis
$C_1,C_2,\dots,C_{2g+1}$ of $H_{1}(M_{1},\ZZ)$, where the projection
of $C_{i}$ to the $x$-plane encircles $e_{i}$ and $e_{i+1}$, as
pictured in the left panel of Figure~\ref{RealHom}.
Any  Riemann surface~$M_2$ of the same genus $g$ with purely real
branchpoints may be connected to $M_1$ by a path $\sigma$ in the space
$\tilde{\cal H}_g$ such that all the branchpoints move along the real
axis during the deformation. The transport of the standard homology
basis for the starting surface along~$\sigma$ is the  standard basis
for the ending surface.  Note that~$\sigma$ usually does not conserve
any period.

Suppose an  isoperiodic path $\tau$ in~$\tilde{\cal H}_g$
connects $M_1$ to $M_2$, both with real branchpoints (intermediate
Riemann surfaces of the path may have general branchpoints). Let
$d\eta_j$ be the distinguished differential on~$M_j$ defined in
Section~\ref{sec:solv}. For every cycle $C_j\in H_1(M_1,\ZZ)$ the
equalities hold:
\be
\int_{C_j} d\eta_1=\int_{C_j\cdot\tau} d\eta_2=
\int_{C_j\cdot(\tau\cdot\sigma^{-1})\cdot\sigma} d\eta_2=
\sum_{r=1}^{2g+1}B_{jr}(\tau\cdot\sigma^{-1})\int_{C_r\cdot\sigma} d\eta_2 \,.
\ee
The path $\beta:=\tau\cdot\sigma^{-1}$ here is a loop in the space
$\tilde{\cal H}_g$
with the base point $M_1$ and it is represented by a braid $\beta\in \Br_{2g+2}$
on $2g+2$ strands. The transport of cycles along the loops by the
Gauss-Manin connection has nontrivial holonomy. Given a standard basis
of $H_1(M_1,\ZZ)$, the holonomy is given by the matrix
$B(\beta)=||B_{jr}||\in \SL_{2g+1}(\ZZ)$. It is easy to calculate this
matrix for an elementary braid
$\beta_r$ corresponding to  Dehn half twist \cite{Birman} interchanging
the branchpoints $e_r$ and $e_{r+1}$ counterclockwise for
$r=1,2,\dots,2g+1$ (see right panel of Figure~\ref{RealHom}):
\be
(C_1,C_2,\dots,C_{2g+1})\cdot\beta_r=(\dots,C_{r-1}-C_r,C_r,C_{r+1}+C_r,\dots).
\label{Burep}
\ee
The braid $\beta_r$ changes only two homology cycles $C_{r-1}$ and $C_{r+1}$.
This matrix representation of braids group is known as the {\em reduced Burau
representation $\mathcal{B}_{t}$} (see Section~2 of \cite{GGBraids})
evaluated at
the parameter $t=-1$.
  \begin{figure}[htb]
\center
 \begin{tikzpicture}[scale=1,decoration={
    markings,
    mark=at position 0.35 with {\arrow[very thick]{>}}}]

\begin{scope}
     \fill[fill=black!10] (-1.5,0) coordinate (Q)  ellipse (3.2cm and 1.3cm);
\draw[thick,gray] (-3.9,0) coordinate (a0)-- (-2.1,0) coordinate(b0)
coordinate[pos=.5](c0);
\draw[thick,gray] (-.9,0)  coordinate (a1)-- (.9,0) coordinate (b1)
coordinate[pos=.5](c1);
   \fill (a0)  circle (1.5pt);
   \fill (b0)  circle (1.5pt);\node[below right] at (a1) {$e_{j+1}$};
  \fill (a1)  circle (1.5pt);\node[below right] at (b0) {$e_{j}$};
   \fill (b1)  circle (1.5pt);

 \draw[postaction={decorate},blue] (-3,0) circle [x radius=1.1cm, y
radius=.4cm];
\node[blue] at (-2.6,.6) {$C_{j-1}$};
   \draw[postaction={decorate},blue] (0,0) circle [x radius=1.1cm, y
radius=.4cm];
\node[blue] at (.2,.6) {$C_{j+1}$};

\draw[postaction={decorate},red] (-.53,0) .. controls ++(90:.8) and
++(90:.8) .. (-2.4,0);
\draw[red,dashed] (-2.4,0) .. controls ++(-90:1) and ++(-90:1) .. (-.53,0);
 \node[red] at (-1.7,.8) {$C_{j}$};
 \draw[red,dashed]  (-4.5,-.4) .. controls ++(0:.4) and ++(-90:.3) .. (-3.7,0);
    \draw[postaction={decorate},red] (-3.7,0)  .. controls ++(90:.3)
and ++(0:.4) .. (-4.5,.4);
   \node[red] at (-4.15,.6) {$C_{j-2}$};
      \draw[red,dashed]  (1.5,-.4) .. controls ++(180:.4) and
++(-90:.3) .. (.7,0);
    \draw[postaction={decorate},red] (1.5,0.4)  .. controls++(0:-.4)
and  ++(90:.3) .. (.7,0);
   \node[red] at (1.4,.65) {$C_{j+2}$};

 \draw[->] (1.8,0) --node[above]{$\beta_{j}$} (2.6,0) coordinate[pos=.5](z);
\end{scope}

\begin{scope}[xshift=7.4cm]
     \fill[fill=black!10] (-1.5,0) coordinate (Q)  ellipse (3.2cm and 1.3cm);
\draw[thick,gray] (-3.9,0) coordinate (a0)-- (-2.1,0) coordinate(b0)
coordinate[pos=.5](c0);
\draw[thick,gray] (-.9,0)  coordinate (a1)-- (.9,0) coordinate (b1)
coordinate[pos=.5](c1);
   \fill (a0)  circle (1.5pt);
   \fill (b0)  circle (1.5pt);
  \fill (a1)  circle (1.5pt);
   \fill (b1)  circle (1.5pt);

    \draw[postaction={decorate},blue] (-3.75,0) .. controls ++(120:.2)
and ++(90:.3) .. (-4.2,0) .. controls ++(-90:1) and ++(-60:1) ..
(-0.7,0);
    \draw[dashed,blue] (-0.7,0) .. controls ++(120:.4) and ++(0:.8) ..
(-2.5,-.3) .. controls ++(180:.6) and ++(-60:.2) .. (-3.75,0);
\node[blue] at (-3.9,-.8) {$C_{j-1}'$};

   \draw[postaction={decorate},blue]  (0.75,0) .. controls ++(-60:.3)
and ++(-90:.2) .. (1.1,0) .. controls ++(90:.7) and
++(120:.4)..(-2.25,0);
    \draw[dashed,blue] (.75,0) .. controls ++(120:.5) and ++(10:.1) ..
(-1,.2) .. controls ++(-190:.1) and ++(-60:.4) .. (-2.25,0);
\node[blue] at (.3,.7) {$C_{j+1}'$};

\draw[postaction={decorate},red] (-.5,0) .. controls ++(90:.8) and
++(90:.8) .. (-2.4,0);
\draw[red,dashed] (-2.4,0) .. controls ++(-90:1) and ++(-90:1) .. (-.5,0);
 \node[red] at (-1.7,.8) {$C_{j}$};
  \draw[red,dashed]  (-4.5,-.4) .. controls ++(0:.4) and ++(-90:.3) .. (-3.7,0);
    \draw[postaction={decorate},red] (-3.7,0)  .. controls ++(90:.3)
and ++(0:.4) .. (-4.5,.4);
   \node[red] at (-4.15,.6) {$C_{j-2}$};
      \draw[red,dashed]  (1.5,-.4) .. controls ++(180:.4) and
++(-90:.3) .. (.7,0);
    \draw[postaction={decorate},red] (1.5,0.4)  .. controls++(0:-.4)
and  ++(90:.3) .. (.7,0);
   \node[red] at (1.4,.65) {$C_{j+2}$};
\end{scope}
 \end{tikzpicture}
\caption{The standard homology basis for a purely real  Riemann
surface $M$ on the left and the transport of basic cycles under the
Dehn half-twist on the right. The slits pairwise joining the branch
points are pictured in grey.}
\label{RealHom}
\end{figure}
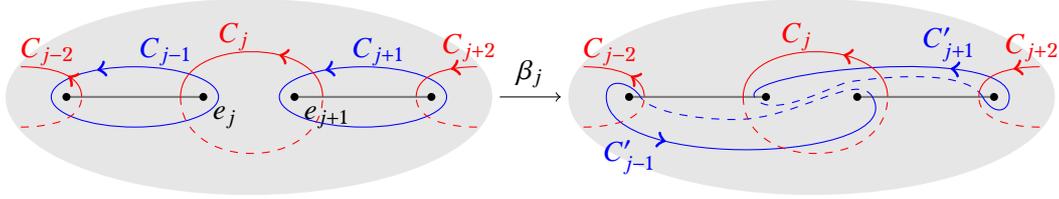

It follows from this discussion that the naturally ordered periods of
two linear graphs connected by an isoperiodic deformation lie in the
same orbit of the representation~$B(\beta)$.
However the braid group is infinite and the fact that two vectors
belong to the same orbit is difficult to check. For this reason we
consider a coarser invariant.  We consider the binary arrays of length
$2g+1$. Obviously, the Burau representation modulo $2$ acts on such
binary strings too, but any orbit is now finite. We will be interested
in the orbits of the binary arrays of the form
\be
(\hbar_1\hbar_2\hbar_3\dots\hbar_{2g+1})\mod2, \text{ with }
\hbar_r:=\frac{n}{2\pi i}\int_{C_r}d\eta_M \in\ZZ
\ee
being the rescaled periods of the distinguished differential,
 $r=1,2,\dots,2g+1$. Note that for totally real curves $M$ all
entries~$\hbar_r$ with even indexes $r$ are zeros and the total sum of
$\hbar_r$ is $n$.

Our immediate goal is to learn how to distinguish
orbits of Burau representation reduced mod 2 on the binary arrays.

\subsubsection{Orbits of the Burau action reduced modulo $2$}
Consider the following generating set in $\ZZ_2^{2g+1}$:
\be
\begin{array}{r c l}
v_1 &=&(1000000\dots0000);\\
v_2&=&(1100000\dots0000);\\
v_3&=&(0110000\dots0000);\\
v_4&=&(0011000\dots0000);\\
&\vdots&\\
v_{2g+1}&=&(000000\dots0011);\\
v_{2g+2}&=&(000000\dots0001).\\
\end{array}
\ee
The only nontrivial linear relation between those vectors is
$\sum_{r=1}^{2g+2}v_r=0$.
An elementary braid $\beta _r$ acting on this set via the reduced
Burau representation modulo $2$
behaves like a transposition of two neighbouring elements:
\begin{equation*}
v_rB(\beta_r)=v_{r+1},\,  v_{r+1}B(\beta_r)=v_r, \text{ and }
v_jB(\beta_r)=v_j, \text{ when } j\neq r, r+1\,.
\end{equation*}
 Therefore the braid group acts as a permutation group on the elements
$v_{i}$ of the generating set. It follows that the length~$Q$ of the
shortest decomposition (there are exactly two of them) of the elements
$v\in\ZZ_2^{2g+1}$
into the generators $v_r$ with $r=1,\dots,2g+2$ is the only invariant of
our braid action on binary strings. This number $Q$ is the \emph{braid invariant} of the array. Note that it takes value in
$\lbrace 1,2,\dots,g+1 \rbrace$ and distinguishes the orbits of action of Burau
representation of braids on binary arrays.

\begin{rmk}
 Looking more carefully at its action on the set of generators
$v_{i}$, it can be shown that the  group generated by the reduced
Burau matrices reduced mod $2$ in $\SL_{2g+1}(\ZZ_2)$ is isomorphic to
the symmetric group on $2g+2$ elements.
\end{rmk}

\subsubsection{The braid invariant of standard forms}
Let us calculate the value of the braid invariant $Q$ for the hyperelliptic curves with associated
linear graphs $\Gamma(s,g,n)$ for $s=0,\dots, m^*$ where
$m^*:=\min(g-1, n-g-1)$ (recall Remark~\ref{rem:mstar} for the
justification of the definition of $m^*$). The binary array
corresponding to the latter graph is $W_{g-s}$, where
\be
W_s=(1010101\dots0101000\dots000b),
 \text{ where } b(s):=(n+s) \mod 2\,,
\ee
with exactly $s$ entries $1$ in the first $2g$ places.
These vectors satisfy the recurrence relation $W_s=v_{2s-1}+v_{2s-2}+W_{s-2}$
which together with the initial conditions   $W_1=v_1+bv_{2g+2}$ and
$W_2=v_2+v_3+bv_{2g+2}$
gives us the value of the braid invariant of the vectors $W_{s}$.
Indeed, let $\alpha := (s+n+g)\mod 2$ with values $0$ and $1$, then
the invariant is
\be
\label{BBinv}
Q(W_{g-s})=g-s+\alpha\le g+1.
\ee
Hence, the values of $Q$ coincide for the equivalent graphs
$\Gamma(s,g,n)$ and
$\Gamma(s-1,g,n)$ when $g+n+s$ is odd and are different for all the
other graphs.
\smallskip
\par
We conclude by comparing the braid invariant with the degree partition invariant.
\begin{prop}
 The braid invariant $Q$ of the vector of $\hbar$-heights of the linear graph coincide with the number $|E^{-}|$ of the degree partition invariant $(|E^{-}|,|E^{+}|)$.
\end{prop}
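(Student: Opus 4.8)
The plan is to reduce the statement to a comparison of two quantities that are, for the linear graph $\Gamma(s,g,n)$, already in hand, and then to explain the coincidence by a direct structural argument valid for any admissible graph. The quickest route is simply to read off the two computations: by \eqref{DPinv} the smaller part of the degree partition invariant is $g-s+\alpha$, and by \eqref{BBinv} the braid invariant of the height array $W_{g-s}$ is $g-s+\alpha$, with the same $\alpha=(s+g+n)\bmod 2$ in both places. Comparing the two formulas proves the proposition. I would nonetheless prefer to see \emph{why} they agree, since the reason applies to an arbitrary graph $\Gamma$.

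I would first translate $|{\sf E}^-|$ into arithmetic of the height array. Write $v=(\hbar_1,\dots,\hbar_{2g+1})\bmod 2$, where by Lemma~\ref{lm:period} the entry $\hbar_r$ is the $\hbar$-distance between the consecutive branchpoints $e_r$ and $e_{r+1}$ along the boundary of $\Gamma$. Combining \eqref{PQ} with \eqref{period} gives $P(e_s)=(-1)^{\sigma_{s-1}}$, where $\sigma_k:=\hbar_1+\dots+\hbar_k$ (with $\sigma_0=0$) is the $\hbar$-distance from $e_1$ to $e_{k+1}$. Thus, after the normalisation $P(e_1)=+1$, a branchpoint lies in ${\sf E}^-$ exactly when the corresponding partial sum is odd, so $|{\sf E}^-|$ is the number of odd terms among $\sigma_0,\sigma_1,\dots,\sigma_{2g+1}$.

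The second step identifies this count with the shortest decomposition of $v$ into the generators $v_1,\dots,v_{2g+2}$. For a subset $S$ with $\sum_{r\in S}v_r=v$, set $t_r:=[r\in S]$; the shape of the generators makes the $j$-th coordinate of the sum equal to $t_j+t_{j+1}$, so $S$ is precisely a solution of the telescoping system $t_j+t_{j+1}=\hbar_j$, $j=1,\dots,2g+1$. Its general solution is $t_r=t_1+\sigma_{r-1}$ with a single free bit $t_1\in\{0,1\}$, giving the two complementary subsets foreseen in the definition of $Q$. Their Hamming weights count the odd, respectively even, partial sums $\sigma_k$, that is $|{\sf E}^-|$ and $|{\sf E}^+|$; taking the smaller and recalling $|{\sf E}^-|\le g+1\le|{\sf E}^+|$ yields $Q=|{\sf E}^-|$.

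The substantive input is the identity $P(e_s)=(-1)^{\sigma_{s-1}}$, which rests on \eqref{PQ} and the period formula \eqref{period}; this is the link that makes the two a priori unrelated invariants numerically equal, and it is where I would be most careful. The remaining difficulties are only bookkeeping: fixing the cyclic labelling of the branchpoints and the convention that pins down the generators $v_r$, and verifying that the two solutions of the telescoping system are genuinely the only---hence the shortest---representations, so that $Q$ is their minimal weight.
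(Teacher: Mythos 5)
Your first paragraph is exactly the paper's proof: the paper disposes of the proposition in one line, by comparing Formula~\eqref{DPinv} with Formula~\eqref{BBinv}, both of which were computed beforehand and both of which equal $g-s+\alpha$ with $\alpha=(s+g+n)\bmod 2$. So at that point you are already done, and done in the same way as the paper.

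The remainder of your proposal is a genuinely different argument, and it is correct. Where the paper obtains the two numbers by separate computations for the linear graphs (the degree partition in Section~\ref{sec:partition}, and $Q(W_{g-s})$ via the recursion $W_s=v_{2s-1}+v_{2s-2}+W_{s-2}$), you explain the coincidence structurally. Your key identity $P(e_s)=(-1)^{\sigma_{s-1}}$, with $\sigma_k=\hbar_1+\cdots+\hbar_k$, is indeed what \eqref{PQ} together with \eqref{period} gives (the cosine of $ni\cdot i\pi\sigma_{s-1}/n$), and it is precisely the even/odd $\hbar$-distance rule the paper states and justifies in Section~\ref{sec:partition}; it converts $|{\sf E}^\pm|$ into the counts of even and odd partial sums of the height array. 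Your telescoping system $t_j+t_{j+1}=\hbar_j$ is the right translation of a decomposition $v=\sum_{r\in S}v_r$: since the only relation among the generators is $\sum_{r=1}^{2g+2}v_r=0$, the solution set is a coset of the kernel $\lbrace 0,(1,\dots,1)\rbrace$, so there are exactly the two complementary solutions $t_r=\sigma_{r-1}$ and $t_r=\sigma_{r-1}+1$, whose Hamming weights are the two counts above. Hence $Q=\min\left(|{\sf E}^-|,|{\sf E}^+|\right)=|{\sf E}^-|$ under the convention $|{\sf E}^-|\le g+1\le|{\sf E}^+|$. What this buys: your argument proves the identity $Q=|{\sf E}^-|$ for the height vector of an \emph{arbitrary} admissible graph, not only the linear ones, so the agreement of the two invariants is conceptual rather than a numerical coincidence of two formulas; the paper's route is shorter only because both formulas had already been established for the standard forms, which is all that Theorem~\ref{main} requires.
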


\begin{proof}
It suffices to compare  Formula~\eqref{BBinv} with Formula~\eqref{DPinv}.
\end{proof}

\section{$k$-differentials on hyperelliptic Riemann surfaces}
\label{sec:compkdiffs}

In this last section, we prove Corollary~\ref{cor:CCkdiff}. We begin by recalling some known facts on $k$-differentials and their moduli spaces. More information can be find in~\cite{BCGGM3}.

Given integers $g\geq0$ and $k\geq1$, a $k$-differential~$\xi$ on a genus $g$ Riemann surface~$M$ is a non-zero section of the $k$th tensorial product of the canonical bundle $K_{M}$. A $k$-differential is said to be primitive if it is not the power of a $k'$-differential with $k'<k$.  

Given a partition $\mu = (m_{1},\dots,m_{n})$ of $k(2g-2)$, we consider the moduli spaces of $k$-differentials whose orders of zeros are equal to $m_{1},\dots,m_{n}$. This moduli space is called a {\em stratum} of $k$-differentials of type $\mu$ and is denoted $\komoduli(\mu)$. The sublocus parametrizing the primitive $k$-differentials of type $\mu$ is denoted by $\komoduli(\mu)^{\rm prim}$. 
\smallskip
\par
We now compute the number of the connected components of the restriction of the strata of $k$-differentials with a unique zero to the hyperelliptic locus. 

\begin{prop}\label{prop:cckdiffggeq3}
 For $g\geq 2$, the number of connected components of the restriction of $\komoduli(k(2g-2))^{\rm prim}$ to the hyperelliptic locus is
 \begin{itemize}
  \item $\left[\tfrac{g-1}{2}\right]$ if $k=2$;
  \item $1$ if $k=3$ and either $g=2$ or $g=3$;
  \item $g/2$  if $k\geq 4$ and $g\geq 2$ are even;
  \item $g/2+1$  if either $g=2$ and $k\geq 5$ is odd, or $k\geq 3$ is odd and $g\geq 4$ is even;
    \item $(g+1)/2$  if  $g\geq 3$ is odd, $k\neq 2$ and either $g$ or $k$ is not equal to $3$.
 \end{itemize}
\end{prop}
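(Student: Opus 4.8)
The plan is to translate the problem into the torsion‑pair language of Corollary~\ref{cor:torsion} and then to count via the degree partition invariant of Lemma~\ref{lem:restdpi}. First I would pin down the shape of such a differential, using $K_M\sim(g-1)H$ with $H\sim Z+JZ$ the hyperelliptic class. A primitive $k$-differential $\xi$ with $\operatorname{div}(\xi)=k(2g-2)Z$ satisfies $k(2g-2)Z\sim kK_M$, which on the Jacobian reads $k(g-1)(Z-JZ)\sim0$. If $Z$ were a \Weierstrass point then $2Z\sim H$ forces $(2g-2)Z\sim K_M$, so $\xi=\omega^{k}$ for the unique abelian differential $\omega$ with $\operatorname{div}(\omega)=(2g-2)Z$, contradicting primitivity; hence $Z$ is an ordinary point and $Z-JZ$ has some finite order $n\mid k(g-1)$. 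Since $h^0\!\big(kK_M-k(2g-2)Z\big)=1$ (the class is trivial), $\xi$ is unique once $Z$ is fixed, and it is a proper power precisely when $n\mid k'(g-1)$ for some proper divisor $k'\mid k$. Thus primitivity is equivalent to $n/\gcd(n,g-1)=k$, i.e. $n=kd$ with $d\mid(g-1)$ and $\gcd\!\big(k,(g-1)/d\big)=1$. Through Remark~\ref{rem:solve} and Corollary~\ref{cor:torsion} this identifies, as a set, the hyperelliptic locus of $\komoduli[g](k(2g-2))^{\rm prim}$ with the spaces of hyperelliptic curves carrying a primitive $n$-torsion pair, for $n$ running over these values $kd$.

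Next I would determine the component invariant. For each fixed admissible $n=kd$ the relevant locus has $a(g,kd)$ components, labelled by the degree partitions $(|{\sf E}^-|,|{\sf E}^+|)$ of Lemma~\ref{lem:restdpi}, so a first, over‑counting estimate is $\sum_{d}a(g,kd)$. The true number is smaller because the $k$-differential remembers less than the marked torsion pair: its zero $Z$ recovers $\{Z,JZ\}$ only up to the $2$-torsion (\Weierstrass) ambiguity, and admissible data of different orders $n=kd$ sharing the same underlying differential must be glued. Carrying this out is exactly the content of Proposition~\ref{prop:relationplatcheby} on torsion packets modulo \Weierstrass points, and I expect the surviving invariant to be the coarsening $\lceil|{\sf E}^-|/2\rceil$, so that the partitions with $|{\sf E}^-|\in\{2j-1,2j\}$ become indistinguishable. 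For even $k$ every $n=kd$ is even, only even $|{\sf E}^-|$ occur, and this coarsening is vacuous; for $g=2$ it recovers the spin‑parity dichotomy of Corollary~\ref{cor:CCkdiff}.

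It then remains to count the values taken by $\lceil|{\sf E}^-|/2\rceil$. By Lemma~\ref{lem:restdpi} the achievable $|{\sf E}^-|$ are the integers of the parity of some admissible $n$ lying in $[\max(1,2g+2-n),g+1]$, the smallest being controlled by the largest admissible order $n=k(g-1)$. A short case analysis on the parities of $g$ and $k$ — separating $k=2$, where $n=2(g-1)$ only reaches $|{\sf E}^-|\ge4$, from $k\ge4$ even, and noting that for odd $k$ the maximal order has the parity of $g-1$ and descends to $|{\sf E}^-|\in\{1,2\}$ — should yield $\lceil g/2\rceil$ for even $k\ge4$, $\lfloor g/2\rfloor+1$ for odd $k$, and $\lceil g/2\rceil-1=[(g-1)/2]$ for $k=2$, which are the stated values. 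The finitely many small genera in which the admissible range is too short to separate the two lowest $\lceil\cdot/2\rceil$-classes — most notably $(g,k)=(3,3)$, where only $n=6$ survives and its partitions $(2,6),(4,4)$ collapse to a single component — are checked by hand.

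The reduction of the first paragraph and the arithmetic of the third are routine; the crux is the middle step, and specifically the claim that the \Weierstrass/$2$-torsion shift genuinely identifies the corresponding $k$-differentials, so that nothing finer than $\lceil|{\sf E}^-|/2\rceil$ survives. I expect this to demand the flat‑geometric machinery of Section~\ref{sec:graphcalcul} transported to the $(d\eta)^2$/$k$-differential picture, that is the spin computation behind Proposition~\ref{prop:relationplatcheby}, together with the case‑by‑case verification of the exceptional low‑genus collapses.
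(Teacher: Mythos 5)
Your opening paragraph is correct, and it is in fact \emph{finer} than the corresponding step of the paper's own proof. The paper argues that primitivity of $\xi$ forces the Pell--Abel solution of degree exactly $n=k(g-1)$ to be primitive, so that the hyperelliptic locus is in one-to-one correspondence with $\mathscr{A}_{g}^{k(g-1)}$ alone, and the stated values are then the single numbers $a(g,k(g-1))$ read off from Theorem~\ref{main} (plus the degenerate cases $k=2$ and $k=g=3$). Your computation instead shows that primitivity of $\xi$ only pins the order of $Z-JZ$ down to $n=kd$ with $d\mid(g-1)$ and $\gcd\bigl(k,(g-1)/d\bigr)=1$. This divergence is not cosmetic, and it is exactly what forces the invented step in your second paragraph.

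That step is the genuine gap. No ``gluing'' between data of different orders can occur: a point of the locus is a pair $(M,\xi)$, its zero $Z$ is recovered from $\xi$ as its divisor (there is no \Weierstrass or $2$-torsion ambiguity --- $JZ$ is not a zero of $\xi$), and $(M,Z)$ determines the exact order $n$ of $Z-JZ$. Moreover the subloci with distinct orders are open and closed in the hyperelliptic locus: by Theorem~\ref{criterion} membership in that locus means all periods of $d\eta$ lie in the discrete lattice $\tfrac{2\pi i}{k(g-1)}\ZZ$, and since the periods vary continuously with the branch set they are locally constant there, so the order cannot jump in a connected family (this is the same isoperiodicity that underlies Theorem~\ref{PAmanifold}). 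Consequently, taking your first paragraph at face value, the component count is exactly $\sum_{d} a(g,kd)$ with no coarsening whatsoever; the invariant $\lceil|{\sf E}^-|/2\rceil$ has no basis, and Proposition~\ref{prop:relationplatcheby} cannot supply one --- it \emph{distinguishes} the two components of $\komoduli[2](2k)^{\rm prim}$ by their spin parity, it never identifies components of different torsion orders. Note the consequence: for $(g,k)=(3,5)$ your criterion admits both $n=5$ and $n=10$, giving $a(3,5)+a(3,10)=1+2=3$, whereas the stated formula gives $(g+1)/2=2$. So the arithmetic of your third paragraph cannot be repaired: the conflict sits between your (correct) primitivity criterion and the paper's claim that only the order $n=k(g-1)$ occurs, and your proposal resolves it by fiat rather than by proof --- as written it establishes neither the proposition's list of values nor the alternative count $\sum_{d}a(g,kd)$.
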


\begin{proof}
 A primitive $k$-differential on a hyperelliptic genus $g$  Riemann surface with a unique zero of order $2k(g-1)$ is equivalent to a primitive solution of Equation~\eqref{PA} of degree $n=k(g-1)$.
Indeed, consider a solution of degree $n$ of the Pell-Abel equation. According to point 3) of Remark~\ref{rem:solve}, there exists a  hyperelliptic Riemann surface~$M_{\infty}$ such that
  \begin{equation}\label{eq:zerosn}
   n\infty_{+} - n\infty_{-} \sim  \calO \,,
  \end{equation}
 where $\calO$ is the trivial bundle of $M_{\infty}$. Moreover, by primitivity of the solution this equation is not satisfied  for any $ n' < n$.
Since we know that
\begin{equation}\label{eq:cano}
   (g-1)\infty_+ + (g-1)\infty_- \sim K \,,
  \end{equation}
  where $K$ is the canonical bundle of $M_{\infty}$,
   we obtain
  \begin{equation}\label{eq:kcano}
   2n \cdot \infty_+ = kK \,.
  \end{equation}
Therefore $\infty_+$ is the unique zero of a $k$-differential~$\xi$. The fact that $n$ is minimal for this property implies that~$\xi$ is a primitive $k$-differential in the locus $\komoduli(2k(g-1))^{\rm prim}$.
  
Conversely, consider a primitive $k$-differential $(M,\xi)$ in the hyperelliptic locus of $\komoduli(2k(g-1))^{\rm prim}$. The zero $z$ of $\xi$ satisfies Equation~\eqref{eq:kcano}. Now it suffices to subtract~$k$ times Equation~\eqref{eq:cano} to this equation to obtain Equation~\eqref{eq:zerosn}. Recall that the degree of the solutions associate to the point $z$ forms a semi-group generated by one element.  Together with the primitivity of the $k$-differential this implies the primitivity of the solution associated to Equation~\eqref{eq:zerosn}.
  
Hence, the components are in one-to-one correspondence with components of primitive solutions of Pell-Abel equation of degree $n=k(g-1)$. Note that $g > n-g-1$ if and only if $k < \tfrac{2g+1}{g-1}$. For $g\geq 3$, this happens if and only if $k=2$ or $k=g=3$. 

Since for $g=2$ we obtain a bijection between the components of $\komoduli[2](2k)^{\rm prim}$ and the components of primitive solutions of Pell-Abel of degree $k$, we obtain the result in genus~$2$ directly from Theorem~\ref{main}.

So if $k=2$, we have $\min(g,2(g-1)-g-1)=g$ and using Theorem~\ref{main}, we obtain that the number of connected components is equal to $\left[(g-1)/2\right]$. If $g=k=3$, the restriction of the stratum $\Omega^{3}\moduli[3](12)^{\rm prim}$ to the hyperelliptic locus is connected. If we are not in one of the previous cases, then the number of components is 
 \begin{eqnarray*}
 \left[\frac{g}{2}\right] + 1\,, & \text{ when $kg-k+g$ is odd, and}\\
  \left[\frac{g+ 1}{2}\right] \,, & \text{ when $kg-k+g$ is even.}
 \end{eqnarray*}
The second case occurs when both $k$ and $g$ are even and the first case otherwise. This concludes the proof of Proposition~\ref{prop:cckdiffggeq3}.
  \end{proof}

  Since Riemann surfaces of genus $2$ are hyperelliptic, this implies the first part of Corollary~\ref{cor:CCkdiff}. Moreover this shows that parity invariant of \cite[Theorem~1.2]{chgeCC} classifies the connected components of $\komoduli[2](2k)^{\rm prim}$. Recall that the parity invariant is given by the parity of the spin structure of the canonical cover associated to a $k$-differential (see Section~5 of \cite{chgeCC} for a detailed discussion).  We now relate the parity invariant  with the degree partition invariant, proving the second part of the corollary.
\begin{prop}\label{prop:relationplatcheby}
 Let $k\geq 5$ be an odd number. The component of $\komoduli[2](2k)^{\rm prim}$ with odd, resp. even, parity corresponds to the component of invariant $(1,5)$, resp. $(3,3)$. Moreover, the component of $\komoduli[2](2k)^{\rm prim}$ is odd if and only if there exists a \Weierstrass point  such that its difference with the zero of the $k$-differential is a $k$-torsion.
\end{prop}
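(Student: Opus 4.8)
The plan is to route both classifying invariants through a single geometric quantity---the size of the $\langle\tau\rangle$-torsion packet of $\infty_+$ on $M$, where $\tau:=[\infty_+-\infty_-]$ is the primitive $k$-torsion class furnished by the Akhiezer function of the primitive solution. Recall that for odd $k\ge5$ both the degree partition invariant ($(1,5)$ or $(3,3)$, by Theorem~\ref{main}) and the parity of the spin structure of the canonical cover (by \cite[Theorem~1.2]{chgeCC}) take exactly two values on $\komoduli[2](2k)^{\rm prim}$, so it suffices to produce one compatible dictionary. Here $M$ is the genus-$2$ curve with marked point $\infty_+$, which by \eqref{eq:kcano} is the unique zero of order $2k$ of the $k$-differential $\xi$.

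First I would identify the spin structure explicitly. Since $\xi$ has its only zero of order $2k\equiv0\ (\mathrm{mod}\ k)$ at $\infty_+$, the canonical cover $\pi\colon\hat M\to M$ is the connected \'etale cyclic cover of degree $k$ attached to the torsion bundle $\tau$; it has genus $k+1$ and $\pi_*\calO_{\hat M}=\bigoplus_{i=0}^{k-1}\calO(-i\tau)$. The tautological abelian differential $\omega$ satisfies $(\omega)=2\pi^*\infty_+$, so the spin structure is the theta characteristic $\theta=\tfrac12(\omega)=\pi^*\calO(\infty_+)$. By the projection formula its parity is
$$
h^0(\hat M,\theta)\equiv\sum_{i=0}^{k-1}h^0\big(M,\calO((1-i)\infty_++i\infty_-)\big)\ (\mathrm{mod}\ 2).
$$
Each summand is the $h^0$ of a degree-one divisor on a genus-$2$ curve, hence equals $1$ when $\infty_+-i\tau$ is linearly equivalent to a point of $M$ and $0$ otherwise; thus $h^0(\theta)$ equals the number $|S|$ of points $p\in M$ with $p-\infty_+\in\langle\tau\rangle$, i.e. the size of the $\langle\tau\rangle$-torsion packet of $\infty_+$.

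Next I would cut $|S|$ down modulo $2$. The packet contains $\infty_+$ and $\infty_-$, and the hyperelliptic involution $J$ acts on it (as $-1$ on the Jacobian) pairing up the remaining non-\Weierstrass points; hence $|S|$ is congruent mod $2$ to the number of \Weierstrass points in the packet. For a \Weierstrass point $W_e=(e,0)$ the relation $2W_e\sim\infty_++\infty_-$ gives $2(W_e-\infty_+)\sim-\tau$, so, with $j_0:=\tfrac{k-1}2$,
$$
k(W_e-\infty_+)=(W_e-\infty_+)-j_0\tau
$$
is a $2$-torsion class; therefore $W_e-\infty_+$ is $k$-torsion if and only if it lies in $\langle\tau\rangle$. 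Writing this class relative to a base point shows it changes by the injective assignment $e\mapsto[W_e-W_{e_1}]\in\mathrm{Jac}(M)[2]$, so \emph{at most one} \Weierstrass point lies in the packet. Combining, the parity of $\theta$ equals the number (either $0$ or $1$) of \Weierstrass points $W$ with $W-\infty_+$ a $k$-torsion class, which already proves the torsion clause of the statement.

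Finally I would match this count to the partition. From $P^2-1=DQ^2$ one gets $P\pm1=c^{\pm}\prod_{e\in{\sf E}^{\pm}}(x-e)\,(\,\cdot\,)^2$ and $Q=\pm RS$, whence $f+1=S\,g_-$ with $g_-:=\big(\prod_{e\in{\sf E}^-}(x-e)\big)S+wR$; a short divisor computation gives $(g_-)=\sum_{e\in{\sf E}^-}W_e+\tfrac{k-1}2\infty_--\tfrac{k+1}2\infty_+$. When the partition is $(1,5)$ the set ${\sf E}^-$ is a single point $e^\ast$, so $g_-$ witnesses $W_{e^\ast}-\infty_+\sim j_0\tau$, i.e. a \Weierstrass point in the packet; hence the $(1,5)$ component has odd parity. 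Since parity is a complete invariant and the two components carry the distinct partitions $(1,5)$ and $(3,3)$, the $(3,3)$ component is forced to have even parity (equivalently, no \Weierstrass point lies in its packet), completing the dictionary. The main obstacle is the second step: correctly matching $\theta=\pi^*\calO(\infty_+)$ with the spin structure used in \cite{chgeCC} and justifying the \'etale structure and the pushforward of $\pi$; once that normalization is fixed, the last two steps are the short divisor bookkeeping sketched above, valid on the dense locus where the branch points stay distinct and extended by continuity.
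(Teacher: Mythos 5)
Your proposal is correct, but it takes a genuinely different route from the paper's proof. The paper argues by (i) quoting \cite[Theorem~3]{GePA}, which identifies, among the linear graphs, exactly the one with heights $(2,2,k-2)$ --- of partition $(1,5)$ --- as carrying a \Weierstrass point $W$ with $W-z$ of $k$-torsion, and (ii) a degeneration argument: it constructs a compact-type twisted $k$-differential (two elliptic curves glued at a node), computes in flat coordinates the torsion orders of the differences between $z$ and the limits of the \Weierstrass points, and invokes \cite[Lemma~5.6]{chgeCC} to conclude that the smoothings of this object have odd parity. You instead compute the parity intrinsically: the canonical cover is the cyclic \'etale cover attached to $\langle\tau\rangle$, the spin structure is $\pi^*\calO(\infty_+)$, and the projection formula together with $h^0\le 1$ for degree-one bundles on a genus-$2$ curve gives that the parity equals $|S|\bmod 2$, where $S$ is the $\langle\tau\rangle$-packet of $\infty_+$; the $J$-action and the relation $2W\sim\infty_++\infty_-$ reduce this to $\#\{W \text{ \Weierstrass{}}: W-z \text{ is } k\text{-torsion}\}\in\{0,1\}$, which proves the torsion clause pointwise --- in fact stronger than the component-wise formulation, and with deformation invariance transparent, since the class $W-\infty_+-\tfrac{k-1}{2}\tau$ is always $2$-torsion, hence locally constant in families. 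Your explicit Pell--Abel factorization then replaces the citation of \cite{GePA} for the implication $(1,5)\Rightarrow\text{odd}$. Both proofs close the dictionary via the completeness of the parity invariant (which the paper establishes from \cite[Theorem~1.2]{chgeCC} plus the component count just before the proposition), so there is no circularity in your final step. Two small remarks: your displayed formula for $(g_-)$ has coefficients $(k\mp 1)/2$, which is the case $|{\sf E}^-|=1$ of the general $(k\mp|{\sf E}^-|)/2$ (harmless, since you only use that case); and you could make the proof independent of the input that parity takes both values by showing directly that partition $(3,3)$ excludes every \Weierstrass point from the packet: if a \Weierstrass point of ${\sf E}^-$, say $W_1$, were in the packet, then the divisor of $g_-$ gives $W_2+W_3-2\infty_+\in\langle\tau\rangle$, and applying $J$ (which acts by $-1$ on the Jacobian) forces $W_2+W_3\sim\infty_++\infty_-\sim 2W_2$, i.e. $W_2\sim W_3$, a contradiction; the symmetric argument with $g_+$ handles ${\sf E}^+$.
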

The proof relies on the technology of degenerations that were introduced in \cite{BCGGM3} and studied in Sections~2 and~3.2 of \cite{chgeCC}. It is recommended but not necessary to have some familiarity with these notions: we will only use the notion of twisted $k$-differentials which appears has limit of $k$-differentials.
\begin{proof}
Let $k$ be an odd integer $\geq5$.
 Let  $(M,\xi)\in\komoduli[2](2k)$  be the primitive $k$-differential whose unique zero is~$z$ such that the graph associated to $M$ (as explained in Section~\ref{sec:graphcalcul}) is linear.
It is shown in proof of \cite[Theorem 3]{GePA}  that there is a \Weierstrass point $W\in M$ such that the difference $W-z$ is a $k$-torsion if and only if the linear graph has heights $(2,2,k-2)$. The degree partition invariant of this graph is $(1,5)$ (and of course~$W$ is the preimage of the unique $e \in \sf E^{-}$). 
\smallskip
\par
Let   $(M,\xi)\in\komoduli[2](2k)$ be a primitive $k$-differential of  odd parity and denote by $z$ its zero. It suffices to prove that there exists a \Weierstrass point $W$ such that $W-z$ is $k$-torsion.
\par 
We start we a twisted $k$-differential $(M_{0},\xi_{0})$ obtained by gluing the $k$-th power of an holomorphic differential on a genus $1$ Riemann surface $(M_{1},\omega_{1})$ to the pole of a $k$-differential $(M_{2},\xi_{2})$ in $\komoduli[1](2k,-2k)^{\rm prim}$ whose $k$-residue vanishes  (see Lemma~5.9 of~\cite{chgeCC} for the existence of such $k$-differential). We denote by $z$ the zero of $\xi_{2}$. Remark that the Jacobian of the underlying singular curve $M_{0}$ is the product of the elliptic curves. This twisted $k$-differential $(M_{0},\xi_{0})$ and its Jacobian are sketched in  Figure~\ref{fig:jacobienne}. 

 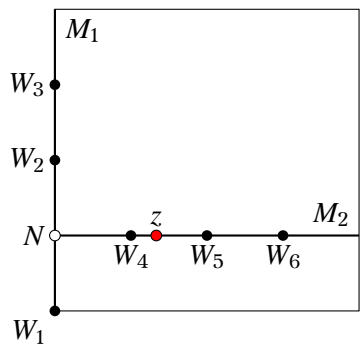
\begin{figure}[htb]
 \centering
\begin{tikzpicture}[scale=1]

\draw (0,0) coordinate (W1) -- ++(4,0) -- ++(0,4) --++(-4,0) coordinate (Q) --++(0,-4);
\draw[thick] (W1) -- (Q)coordinate[pos=.25] (P)coordinate[pos=.5] (W2)coordinate[pos=.75] (W3);
\draw[thick] (P) -- ++(4,0) coordinate[pos=.25] (W4)coordinate[pos=.5] (W5)coordinate[pos=.75] (W6)coordinate (R);

\foreach \i in {1,...,6}
 \fill (W\i) circle  (2pt); 
 
 \filldraw[fill=white] (P) circle (2pt);
 \filldraw[fill=red]  (4/3,1) coordinate (Pk) circle (2pt);
 
 \node[below left] at (W1) {$W_{1}$};
  \node[left] at (W2) {$W_{2}$};
  \node[left] at (W3) {$W_{3}$};
   \node[below] at (W4) {$W_{4}$};
  \node[below] at (W5) {$W_{5}$};
  \node[below] at (W6) {$W_{6}$};
    \node[left] at (P) {$N$};
      \node[above] at (Pk) {$z$};
     \node[below right] at (Q) {$M_{1}$};
   \node[above left] at (R) {$M_{2}$};
\end{tikzpicture}
\caption{The Jacobian of $M_{0}$.}
\label{fig:jacobienne}
\end{figure}

This twisted differential is smoothable in the stratum $\komoduli[2](2k)$. The limits of the \Weierstrass points of any such smoothing are the $2$-torsion points modulo the node $N$. Denote by $W_{1},W_{2},W_{3}$, resp. $W_{4},W_{5},W_{6}$, the $2$-torsion points on $M_{1}$, resp. $M_{2}$. 
We consider the $2$-torsion points on $M_{2}$. Let $v_{1},v_{2}\in\CC$ such that $M_{2}\sim \CC/(\ZZ v_{1}\oplus \ZZ v_{2})$ and suppose that the node is the image of $0\in\CC$.
The coordinates of $z$ are $\left(\tfrac{n_{1}}{2k},\tfrac{n_{2}}{2k}\right)$ where $\pgcd(n_{1},n_{2},2k)\in \lbrace 1,2 \rbrace$ is the rotation number  of $\xi_{2}$ (see \cite[Theorem 3.12]{chgeCC}). Hence, the differences $W_{i}-z$ are given by $\left(\tfrac{n_{1}-k\delta_{1}}{2k},\tfrac{n_{2}-k\delta_{2}}{2k}\right)$ with $(\delta_{1},\delta_{2})\in (\ZZ/2\ZZ)^{2}\setminus\lbrace(0,0)\rbrace$. The orders of torsion of these differences are $$\frac{2k}{\pgcd(n_{1}-k\delta_{1},n_{2}-k\delta_{2},2k) } \,.$$

Suppose that the rotation number $\pgcd(n_{1},n_{2},2k)$ of  $\eta_{2}$ is $1$. If both  $\delta_{i}$ have the same parity than $n_{i}$, then both $n_{i}-k\delta_{i}$ are even. Hence there exists a $2$-torsion point on $M_{2}$, given by $kz \in M_{2}$, such that its difference with $z$ is $k$-torsion. 
Finally, Lemma~5.6 of \cite{chgeCC} shows that the parity of the $k$-differentials obtained by smoothing this twisted $k$-differentials is  odd.
\end{proof}

\phantomsection
 \addcontentsline{toc}{section}{References}
\bibliographystyle{alpha}
\bibliography{biblio}

\bigskip
\noindent
\small{Andrei Bogatyrev\\
Institute for Numerical Mathematics,\\
Russian Academy of Sciences,\\
119991 Russia, Moscow, ul. Gubkina 8\\
{\em email:} ab.bogatyrev@gmail.com} 
\smallskip
\par
\noindent
\small{Quentin Gendron\\
Instituto de Matem\'{a}ticas de la UNAM\\
Ciudad Universitaria, CDMX, 04510, M\'{e}xico\\
{\em email:} quentin.gendron@im.unam.mx} 
 \end{document}